\newtheorem{theorem}{Theorem}[section]
\newtheorem{corollary}{Corollary}[theorem]
\newtheorem{lemma}[theorem]{Lemma}
\newtheorem{remark}[theorem]{Remark}
\newtheorem{definition}[theorem]{Definition}
\newtheorem{notation}[theorem]{Notations}
\newtheorem{proposition}[theorem]{Proposition}
\newcommand{\onen}{\mathbb{1}_n}
\newcommand{\onenn}{\mathbb{1}_{n-1}}
\newcommand{\zn}{\mathbb{0}_n}
\newcommand{\znn}{\mathbb{0}_{n-1}}
\newcommand{\Inn}{\textrm{I}_{n-1}}
\def\R{\mathbb R}
\def\t{\theta}
\newcommand{\mco}{\mathcal O}
\newcommand{\sww}{\sigma_{ww}}
\newcommand{\swy}{\sigma_{wy}}
\newcommand{\syy}{\sigma_{yy}}
\newcommand{\beq}{\begin{equation}}
\newcommand{\eeq}{\end{equation}}
\newcommand{\bdima}{\begin{displaymath}}
\newcommand{\edima}{\end{displaymath}}
\newcommand{\tayvec}{\left [\begin{array}{l}
\sigma_{ww} \\
\sigma_{wy} \\
\sigma_{yy} \\
\end{array}
\right ]}
\newcommand{\dtayvec}{\left [\begin{array}{c}
w_k^2-\sigma_{ww} \\
w_k y_k-\sigma_{wy} \\
y_k^2-\sigma_{yy} \\
\end{array}
\right ]}
\begin{document}

\title{Stability of Translating States for Self-propelled Swarms with Quadratic Potential}
\date{\vspace{-5ex}}
\author{Irina Popovici}
\affil{Mathematics Department, United States Naval Academy, Annapolis, MD 21402, popovici@usna.edu}

\maketitle

\begin{abstract}
  The main result of this paper is proving the stability of translating states (flocking states) for the system of $n$-coupled self-propelled agents governed by $\ddot r_k = (1-|\dot r_k|^2)\dot r_k - \frac{1}{n}\sum_{j=1}^n(r_k-r_j)$, $r_k\in \mathbb R^2$. A flocking state is a solution where all agents move with identical velocity. Numerical explorations have shown that for a large set of initial conditions, after some drift, the particles' velocities align, and the distance between agents tends to zero.  We prove that every solution  starting near a translating state asymptotically approaches a translating state nearby, an asymptotic behavior exclusive to swarms in the plane. We quantify the rate of convergence for the directional drift, the mean field speed, and the oscillations in the direction normal to the motion.
  The latter decay at a rate of $1/ \sqrt t$, mimicking the oscillations of some systems with almost periodic coefficients and cubic nonlinearities. We give sufficient conditions for that class of systems to have an asymptotically stable origin.
\end{abstract}

\section{Introduction}

The emergence of patterns in multi-agent dynamical systems has received a lot of attention since Turing featured it in the seminal paper
 \cite{Turing}; interest was reignited by Kuramoto's model of synchronization \cite{Kuramoto} and Cucker-Smale's model of flocking \cite{CuckerSmale}.
Self organization has been observed in other natural swarms (school of fish, desert locusts, pacemaker cells  , \cite{Ballerini}, \cite{Camazine},
\cite{Buhl}, \cite{Reppert}) and in artificial systems, sometimes  as a desired effect (synchronization of power grids, lasers, \cite{Hellmann}, \cite{Wang}), sometimes as detrimental (crowd-structure interactions in \cite{Bridge}).

In this paper we study the alignment in a swarm system with $n$ identical self-propelled agents having all-to-all coupling
\beq
\label{Main}
\ddot r_k = (1-|\dot r_k|^2)\dot r_k - \frac{1}{n}\sum_{j=1}^n(r_k-r_j), \; \mbox{ where } r_k(t) \in \mathbb R^2
\eeq
\begin{notation} We denote by $\displaystyle R(t)=\frac{1}{n}\sum_{j=1}^n r_j(t)$ the position of the center of mass, and by $V(t)= \dot R=\frac{1}{n}\sum_{j=1}^n \dot r_j(t)$ the velocity of the center of mass, a.k.a the mean velocity.
\end{notation}

On occasion, it is convenient to think of the motion as taking place in the complex plane $\mathbb C$ rather than $\R^2;$  we identify of a vector $r=(x, y)$ with the complex number $\mathbb r= x+i y.$
The system (\ref{Main}) becomes

\beq
\label{MainC}
\ddot{\mathbb r}_k = (1-|\dot{\mathbb r}_k |^2)\dot{\mathbb r}_k - (\mathbb r_k- R).
\eeq

Note that given constants $\mathbb c_0 \in \mathbb C$ and $\theta _0 \in \R$, if $\displaystyle \left ( \mathbb r_k (t) \right )_{k=1..n}$ is a solution for (\ref{MainC}), then the vectors
$\displaystyle e^{i \theta _0} \mathbb r_k (t) + \mathbb c_0$ and the
complex conjugate vectors $\overline{\mathbb r_k (t)} $ satisfy (\ref{MainC}) as well. Thus the systems (\ref{Main}) and  (\ref{MainC}) are translation, rotation and reflection invariant.

It has been shown that the system's coupling strength is enough to ensure that eventually all agents move in finite-diameter configurations (\cite{M-Popovici}), yet not too strong to lock the dynamics into a unique pattern of motion. We note that the self-propelling term $(1-|\dot r_k|^2)\dot r_k$ brings energy into the system when a particle's speed is below one, and dissipates energy when the particle moves at high speed. This suggests that if agents are to align or synchronize, they would select patterns where all agents move at roughly unit speed. Numerical experiments show that for a large set of initial conditions the system reaches a limit state that has the center of mass either moving uniformly (at unit speed), or becoming stationary, the jargon for these configurations being flocking/translating and milling/rotating. (There are no generally accepted definitions for these terms; see definitions  \ref{def_rotating},
\ref{def_flocking} for how they are used in this paper). We want to emphasise that unlike the classical Cucker-Smale flocks \cite{CuckerSmale} or Kuramoto oscillators \cite{Kuramoto}, the center of mass for (\ref{Main}) does {\em not} have uniform motion:
\beq
\label{MainAcc}
\dot V= \frac{1}{n}\sum_{j=1}^n(1-|\dot r_j|^2)\dot r_j.
\eeq

Theoretical results in \cite{M-Popovici} and in this paper (Figures \ref{Drift}, \ref{Drift2} and Remark \ref{driftremark}) show that as the center of mass moves toward its limit state  it has a not-insignificant drift in its location or in its direction of motion. Let $\Theta(t)$ denote the polar angle of the mean velocity, so $\displaystyle V(t)=|V|e^{i\Theta(t)}.$

\begin{figure}[h!]
\centering
\includegraphics[width=.4\textwidth]{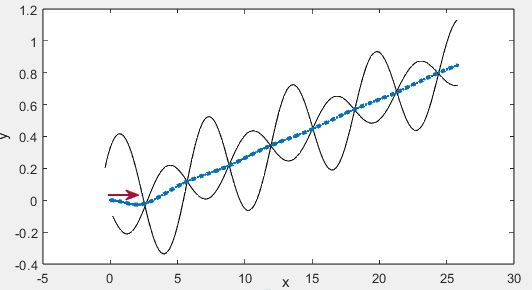}
\caption{ The position of two of the particles of a multi-agent swarm  (\ref{Main}), shown as the solid black curves. Initially the center of mass  has horizontal velocity and unit speed, as shown by the red vector. The flock's eventual velocity $V_{\infty}$ has drifted away from the horizontal, as illustrated by the location of the center of mass (the blue square markers).  The speed $|V|$ approaches one at a rate of $1/t.$}
\label{Drift}
\end{figure}

\begin{figure}[h!]
\centering
\includegraphics[width=.6\textwidth]{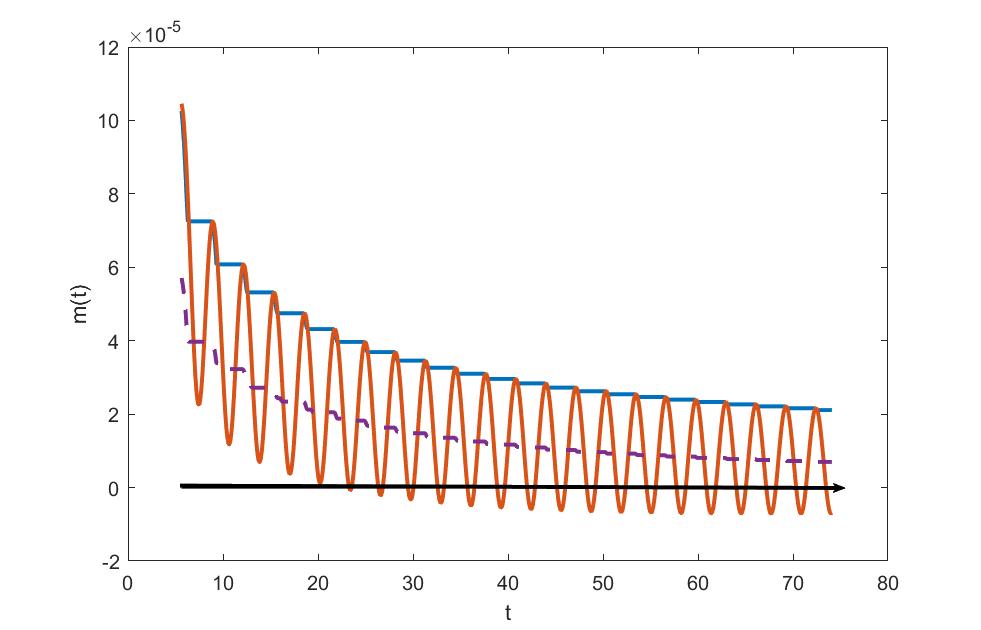}
\caption{ The change in the direction of motion for a swarm with four agents. The oscillatory curve is the graph of $m=\frac{d\Theta}{dt}$ where $\Theta $ is the polar angle of the mean velocity $V.$
 The dotted line shows the mean value of $m$ over a time window of size $2\pi.$
The flock's eventual velocity $V_{\infty}$ has a convergent polar angle, with an overall counterclockwise drift equal to $\Theta_{\infty}-\Theta_0= \int_0^\infty m dt>0.$}
\label{Drift2}
\end{figure}

The center of mass is very sensitive to perturbations, in the sense that small changes in initial conditions  could result in large scale drift of the center of mass just on account of changing the direction of motion. In order to control the evolution of small perturbations it may be more expedient to focus on the velocities  $V$ and $v_k$ and to recover the information about $R$ and $r_k$  via integration.
The velocity-acceleration system for  $v_k=\dot r_k$, obtained by differentiating (\ref{Main}) is:
\beq
\label{VelocityAcc}
\ddot v_k = (1-|v_k|^2)\dot v_k - 2(v_k^T \dot v_k) v_k- (v_k-V)
\eeq
where the superscript $T$ denotes transposition.

\subsection{Special Configurations}

Second order models of self-propelled  agents with all-to-all gradient coupling  have been extensively studied, notably  by researchers in Prof. Bertozzi's group (\cite{Topaz}, for continuous models, and \cite{BertozziCaltech} for agent-based models). Let $K: [0, \infty) \to  \R$ be a continuous scalar function, differentiable on $(0, \infty).$ Define $U:\R^d\to \R$ as $U(x)=K(|x|).$ With the possible exception of the origin, the potential $U$ is differentiable with $\nabla U(x)= K'(|x|) \frac{x}{|x|}. $ Consider the system with identical self propelling terms and rotationally symmetric potential:
\beq
\label{gradient}
\ddot r_k=(\alpha -\beta |\dot r_k|^2)\dot{r}_k - \frac{1}{n}\sum_{j\neq k }\nabla U(x_k-x_j)
\eeq
where $\alpha, \beta$ are positive constants.

The system (\ref{Main}) is in the class of (\ref{gradient}) with $K(r)=\frac{1}{2}r^2$ and $\alpha=\beta =1.$ Other well known classes of radial potentials are Morse potentials where $K$ is the difference between two exponential decays (see \cite{Bertozzi2006}, \cite{vonBrecht}), and power law potentials where $K(r)= c_ar^a-c_br^b, $ ( \cite{Bertozzi2015}, Section 8; \cite{Carrillo};  \cite{M-Popovici}, 3.3-3.5). Given the symmetry of (\ref{gradient}) it is natural to expect that all emergent patterns feature rotational symmetries; in that spirit, with the exception of \cite{M-Popovici} and \cite{K-Popovici}, most theoretical results address particles that are uniformly distributed around their center of mass and their similarly {\em restricted} perturbations, or have assumptions about the degeneracies of the system that for (\ref{Main})
only hold for $n \leq 2$, \cite{C-S-Martin}. We emphasize that we make no uniform distribution assumption in this paper, which explains the sharp distinction between the rates of convergence to limit configuration: exponential rates  obtained by authors working exclusively in the subset with rotational symmetries, versus $\frac{1}{\sqrt t}$ in this paper (and in \cite{K-Popovici}). In fact (\ref{Main}) has very few configurations with symmetries:

\begin{proposition}
\label{NoSymmetries}
If among the $n$ agents of the system (\ref{Main}) there exist $p \geq 3 $  agents (assumed to be the first $p$) whose motion is symmetrically distributed about the center $R$ of the swarm, i.e.
\bdima
\mathbb r_k(t)= R(t)+e^{2\pi i \frac{(k-1)}{p}}(\mathbb r_1(t)-R(t)) \; \mbox{ for }\; k=1 ..p,
\edima
then either $\mathbb r_1(t)=\dots \mathbb r_p(t)$ for all $t\in \R$, or $R(t)=R(0)$ for all $t\in \R.$
\end{proposition}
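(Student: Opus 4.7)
My plan is to exploit the rotational symmetry of the first $p$ agents to collapse their $p$ equations of motion into a few constraints on the two complex functions $z_1(t) := \mathbb r_1(t) - R(t)$ and $R(t)$. Set $\omega = e^{2\pi i/p}$, $w := \dot z_1$, and $V := \dot R$. The hypothesis gives $\mathbb r_k = R + \omega^{k-1} z_1$ and $\dot{\mathbb r}_k = V + \omega^{k-1} w$ for $k = 1, \ldots, p$, so
\begin{equation*}
|\dot{\mathbb r}_k|^2 = |V|^2 + |w|^2 + 2\operatorname{Re}(\omega^{k-1}\bar V w).
\end{equation*}
Substituting into (\ref{MainC}) and grouping by the power of $\zeta := \omega^{k-1}$, I would rewrite the $k$-th equation as
\begin{equation*}
X(t) + \zeta\,Y(t) + \zeta^{-1} P(t) + \zeta^{2} Q(t) = 0,
\end{equation*}
where $X = \ddot R - (1 - |V|^2 - 2|w|^2)\,V$ and $Y = \ddot z_1 - (1 - 2|V|^2 - |w|^2)\,w + z_1$ collect the ``diagonal'' contributions, while the cross-terms generated by the cubic self-propulsion are $P = \bar w V^2$ and $Q = w^2 \bar V$.

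Since this identity holds at the $p$ distinct values $\zeta = 1, \omega, \ldots, \omega^{p-1}$, a Vandermonde / discrete-Fourier argument on the exponents $\{0, 1, -1, 2\}$ modulo $p$ determines the coefficients. For $p \geq 4$ the four exponents are pairwise distinct, so each of $X, Y, P, Q$ must vanish identically in $t$. In particular $\bar w V^2 \equiv 0$ forces, at every $t$, $w(t) = 0$ or $V(t) = 0$. Because (\ref{Main}) has polynomial right-hand side, every solution is real-analytic; hence if $V \not\equiv 0$ its zeros are isolated and $w$ must vanish on a dense open set, hence identically by continuity. Then $z_1$ is constant, $\ddot z_1 = 0$, and $Y = 0$ collapses to $z_1 \equiv 0$, giving $\mathbb r_1 \equiv \cdots \equiv \mathbb r_p \equiv R$. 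Otherwise $V \equiv 0$ and $R$ is constant, the second alternative.

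The main obstacle is the case $p = 3$, where $-1 \equiv 2 \pmod 3$: the modes $P$ and $Q$ merge into a single Fourier frequency, and the Vandermonde step yields only $X = Y = 0$ together with $P + Q = 0$, i.e., $\bar w V^2 + w^2 \bar V = 0$. To handle this I would introduce $\mu := w/V$ on the open set $\{V \neq 0\}$; the constraint becomes $\bar\mu + \mu^2 = 0$, whose solution set is the discrete four-point set $\{0, -1, e^{\pm i\pi/3}\}$, so by continuity $\mu$ is locally constant in $t$. For any nonzero admissible value one has $|\mu| = 1$ and hence $|w| = |V|$, so $\dot z_1 = \mu \dot R$ integrates to $z_1 = \mu R + c$ on each interval of constancy; substituting into $Y = 0$ (which under $|w|=|V|$ simplifies to $\ddot z_1 = \mu \ddot R - z_1$) forces $\mu R(t) + c \equiv 0$, hence $R$ constant on that interval, contradicting $V \neq 0$. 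Thus $\mu \equiv 0$ wherever $V \neq 0$, and the same analyticity argument used for $p \geq 4$ then gives either $z_1 \equiv 0$ or $R$ constant.
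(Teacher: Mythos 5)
Your proposal is correct, but it takes a genuinely different route from the paper's. You expand the $k$-th equation of motion directly in the character $\zeta=\omega^{k-1}$, note that only the modes $\zeta^0,\zeta^{1},\zeta^{-1},\zeta^{2}$ appear, and use linear independence of characters on the $p$-th roots of unity: for $p\ge 4$ this kills all four coefficients at once, so $\bar w V^2\equiv 0$, and real-analyticity of solutions (isolated zeros of $V$ when $V\not\equiv 0$) upgrades ``$w=0$ where $V\neq 0$'' to $w\equiv 0$ and then $Y=0$ gives $z_1\equiv 0$; the resonant case $p=3$, where $-1\equiv 2 \pmod 3$, is handled via $\bar\mu+\mu^2=0$ for $\mu=\dot z_1/V$, whose discrete solution set $\{0,-1,e^{\pm i\pi/3}\}$ makes $\mu$ locally constant, and a nonzero value forces $z_1=\mu R+c\equiv 0$, hence $R$ constant on that interval, a contradiction (note that your simplification of $Y=0$ to $\ddot z_1=\mu\ddot R-z_1$ silently uses $X=0$, which is indeed available). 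The paper proceeds differently: it first divides the $k$-th equation by $Ve^{i\theta_k}$, reduces everything to the single cubic relation $\beta=m_k(\alpha-|m_k|^2)$ with $m_k=e^{-i\theta_k}+m$, pins down $\alpha,\beta$ by averaging over the roots of unity (its own discrete-Fourier step, Lemma \ref{AppLemma}), and then argues geometrically -- solutions of $z(\alpha-|z|^2)=\beta$ with $\beta\neq 0$ are collinear, and at most two roots of unity can lie on a circle of radius $\sqrt 3$ -- treating all $p\ge 3$ uniformly and finishing with uniqueness of solutions rather than analyticity. Your nonresonant argument is cleaner and yields more (each mode vanishes separately, e.g.\ the exact relation $\ddot R=(1-|V|^2-2|\dot z_1|^2)V$), at the cost of a separate $p=3$ case; reassuringly, that case reproduces the same discrete-set phenomenon as the paper's Case 2, where $m$ is forced into $\{-e^{-i\theta_q}\}$ (precisely the cube roots of $-1$ when $p=3$) and the contradiction is again that $R$ would have to be stationary.
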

\begin{proof} To improve the flow of the paper, the proof is presented in the Appendix.
\end{proof}

It was shown in \cite{K-Popovici} that the only possible configurations of (\ref{Main})  with a stationary center of mass come from partitioning the agents $\{ 1, \dots n\}$ into subsets $\mathcal F, \mathcal L, \mathcal R$ where the agents in $\mathcal F$ have fixed positions coinciding with the center $R(0)$ at all times; each agent in $\mathcal L $ oscillates on a segment centered at $R(0)$,  satisfying the scalar  Lienard equation  $\ddot a= ( 1- \dot a ^2) \dot a -a$ (which has an attracting cycle); each agent in $\mathcal R$  is asymptotic to a rotation $e^{\pm it} e^{i\theta_{\infty, k}}.$

\begin{definition}
\label{def_rotating}
A solution $\displaystyle \left (  r_k (t) \right )_{k=1..n}$  for (\ref{Main}) is called a rotating state if there exist angles $\theta_{0,k}$  with $\displaystyle \sum_{k=1}^n e^{i\theta_{0,k}}=0$ and a constant $ c \in \mathbb C$ such that
$ r_k (t) = e^{i\theta_{0,k}}e^{it} + c $ for all $t$ and all $k$ (a counterclockwise rotating state), or
$r_k (t) = e^{i\theta_{0,k}}e^{-it} + c $ for all $t$ and all $k$ (a clockwise rotating state).
\end{definition}
Some authors, but not all, use the term mill for a rotating state; most authors define a mill as a rotating state whose angles $\theta_{0,k}$ are uniformly distributed in $[0, 2\pi]$, a.k.a having rotational symmetry. {\em All} the rotating states of (\ref{Main}) are stable, per \cite{K-Popovici}. Configurations with {\em rotational symmetries} for this system and for other radial potentials have been extensively studied: see \cite{Bertozzi2007}, \cite{Bertozzi2006}, \cite{vonBrecht}, \cite{Bertozzi2015} 
for some early theoretical and experimental results.

\begin{definition}
\label{def_flocking}
A solution $\displaystyle \left ( r_k (t) \right )_{k=1..n}$  for (\ref{Main}) is called a flocking state if $ \dot{ r}_{k_1} (t)= \dot{r}_{k_2}(t)$ for all $k_1, k_2 $ in $1..n $. (Necessarily the common velocity is that of the center of mass.)
\end{definition}
Note that for a flocking state $ \ddot{ r}_k = \dot{ V} .$ If we subtract the
equation for $ \ddot{ r}_{k_1}$ from that of $ \ddot{ r}_{k_2}$ we get that $ r_{k_1}= r_{k_2}$ for all $k_1, k_2 $ meaning that all particles have collapsed onto the center of mass, moving together according to
 $$\dot{ V}= (1-| V|^2) V. $$

 Unless $ V(t)=0 $ for all $t$ (the trivial solution), the velocity of the flocking solutions maintains the initial direction of $ V(0)= | V(0)| e^{i \Theta_0} $ with speed approaching unit speed at a rate of $e^{-2t}$ (since $| V|^2$ solves the logistic equation $ \displaystyle \frac{d}{dt}( | V|^2)= 2 (1-| V|^2)| V|^2 \; ). $ A nontrivial flocking state has velocity approaching $e^{i \Theta_0} $ exponentially fast.
 Moreover, if $ c_{\infty}$ denotes the absolutely convergent integral
  $\displaystyle  c_{\infty}= \int _0^\infty \left [ V (\tau)- e^{i \Theta_0} \right ]d \tau $ then the flocking state's center of mass has nearly rectilinear motion:
  $\lim _{t \to \infty} \left [R(t)-(  c_{\infty} + t  e^{i \Theta_0} ) \right]= 0.$
  The limit holds since
   $ R(t)- t  e^{i \Theta_0} - c_{\infty} = \int _0^t \left [V (\tau)- e^{i \Theta_0} \right ] d \tau -\int _0^\infty \left [ V (\tau)- e^{i \Theta_0} \right ] d \tau = -\int _t^\infty \left [ V (\tau)- e^{i \Theta_0} \right ] d \tau .$

\begin{definition} A solution $\displaystyle \left (  r_k (t) \right )_{k=1..n}$  for (\ref{Main}) is called a translating state if there exists $\Theta_0 \in [0, 2\pi]$ such that  $ \dot{ r}_{k}(t)= e^{i \Theta_0} $ for all $k=1..n.$
 (Necessarily, for a translating state there exists $c_0 \in \mathbb C$ such that $ r_k (t)=  c_0+ te^{i \Theta_0} $ for all $k.$)
 \end{definition}

The main result of this paper is proving that the translating states are stable, in the sense that if the particles in (\ref{Main}) start close to each other, with velocities near some  unit vector $(\cos \Theta _0, \sin \Theta _0)$ then the particles remain close to each other, with velocities that are asymptotic to a common unit vector, whose direction does not drift far from $\Theta _0, $ as stated in Theorem \ref{MainTh}. We also quantify the rate of convergence to the limiting configuration (Corollary \ref{RateCor}), and we show that it is at a rate of $\displaystyle \frac{1}{\sqrt t}, $ much slower than the conjectured exponential rate.

\begin{theorem}
\label{MainTh} Consider the system (\ref{Main}) with initial conditions $r_k(0)=r_{k,0} $ and $\dot r_k(0)=\dot r_{k,0}. $ For any $\epsilon >0$ there exists $\delta >0$ such that if $r_{k,0} $ and $\dot r_{k,0}$ satisfy:
$ |r_{k_1,0}- r_{k_2,0}| <\delta $ for all $k_1, k_2$ in $1..n$ ; $1-\delta<|V_0|< 1+\delta$; $ |\dot r_{k,0}-V_0|  <\delta $ for all $k$ in $1..n$, then  for $t>0$ :
\beq
\label{FirstMain}
\begin{array}{l}
|r_{k_1}(t)- r_{k_2}(t)| <\epsilon \; \mbox{ for all } k_1, k_2 \; \mbox{ in } 1..n \\
1-\epsilon<|V(t)|< 1+\epsilon \\
 |\dot r_k(t)-V_0|  <\epsilon \; \mbox{  for all } k \mbox{ in } 1..n
 \end{array}
 \eeq
Moreover,
\beq
\label{MainZeroDr}
\lim _{t \to \infty} |r_{k_1}(t)- r_{k_2}(t)| =0 \; \mbox{ for all } k_1, k_2 \mbox{ in } 1..n
\eeq
 and there exists $\Theta_{\infty}$ in $[0, 2\pi]$ with $|V(0)-(\cos \Theta_{\infty}, \sin \Theta_{\infty})|<\epsilon$ such that
 \beq
 \label{MainZeroV}
 \lim _{t \to \infty} \dot r_k(t)=  \lim _{t \to \infty} V(t)= (\cos \Theta_{\infty}, \sin \Theta_{\infty}).
 \eeq
\end{theorem}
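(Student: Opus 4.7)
The plan is to exploit the symmetry group of the system, decompose the motion into mean-field and fluctuation modes, and close a bootstrap argument on a suitable energy functional. By the rotation invariance recorded after (\ref{MainC}) I may assume the reference velocity is horizontal, $\Theta_0=0$; by translation invariance I can place $R(0)$ at the origin. The hypotheses then read: the fluctuations $w_{k,0}:=r_{k,0}-R_0$ and $u_{k,0}:=\dot r_{k,0}-V_0$ are small, as is $|V_0-1|$. Note that $\sum_k w_k=\sum_k u_k=0$, so the fluctuations live in an $(n{-}1)$-dimensional subspace orthogonal to the mean-field mode.

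Next I would separate the mean-field from the deviation dynamics. Subtracting (\ref{MainAcc}) from (\ref{Main}) yields
\bdima
\ddot w_k+w_k=(1-|\dot r_k|^2)\dot r_k-\frac{1}{n}\sum_{j=1}^n(1-|\dot r_j|^2)\dot r_j,
\edima
a harmonic oscillator at angular frequency one, forced by the mean-centered self-propelling term, which is $O(|V-1|+\|u\|^2)$ when all speeds are near one. Simultaneously (\ref{MainAcc}) expands, about $V=|V|e^{i\Theta}$, into a near-logistic equation for $|V|^2$ (with exponential relaxation to $1$) and an equation for $\Theta$ whose right-hand side is quadratic in the velocity fluctuations. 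The natural quantities to monitor are the second moments $\sww,\swy,\syy$ in the tangent/normal frame carried along $V$, together with $1-|V|^2$ and $\Theta$.

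The heart of the argument is a bootstrap on an energy-like functional, essentially $E(t)=\frac{1}{n}\sum_k\bigl(|w_k|^2+|\dot w_k|^2\bigr)$, coupled with $F(t)=(1-|V|^2)^2$. Using the near-orthogonality $w_k\perp\dot w_k$ imposed by the unit oscillation frequency, a direct computation produces a bound of the form $\dot E\le C(E+F)^{3/2}$ modulo sign-definite dissipation, and a standard continuity argument promotes smallness at $t=0$ to uniform smallness on $[0,\infty)$, giving (\ref{FirstMain}). To upgrade to (\ref{MainZeroDr}) and (\ref{MainZeroV}) I would then show that the dissipation is strong enough to make $\dot\Theta$ integrable (so that $\Theta_\infty=\lim\Theta(t)$ exists) and to force $E(t)\to 0$; since the oscillator equation for $w_k$ ties $\dot w_k$ to the forcing, this also yields $\dot w_k\to 0$, hence $\dot r_k\to V_\infty=e^{i\Theta_\infty}$. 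The bound $|V_0-(\cos\Theta_\infty,\sin\Theta_\infty)|<\epsilon$ follows by taking $\delta$ small enough that both $||V_0|-1|<\epsilon/2$ and $\int_0^\infty|\dot\Theta|dt<\epsilon/2$.

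The main obstacle, and the reason the decay is only $1/\sqrt t$ rather than exponential, is that the dissipation is anisotropic. Expanding the self-propelling force $(1-|\dot r_k|^2)\dot r_k$ near $\dot r_k\approx(1,0)$ gives linear damping of the \emph{tangential} velocity fluctuation but only cubic damping of the \emph{normal} component, since the tangential factor $1-|\dot r_k|^2$ itself vanishes at first order in the normal direction. Thus the transverse oscillators behave like the scalar model $\dot y\sim-y^3$, whose solutions decay like $1/\sqrt t$. Closing the bootstrap at this rate requires a careful two-timescale separation between the fast (tangential, exponentially damped) and slow (normal, algebraically damped) modes, and a clean control of the cross-terms; this is where the almost-periodic cubic-system machinery alluded to in the abstract has to be deployed, and I expect this to be the most delicate part of the bookkeeping.
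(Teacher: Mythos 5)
Your outline matches the paper's strategy at the level of structure (co-moving tangential/normal frame along $V$, exponential damping of the speed/tangential modes, cubic-order damping of the normal oscillations giving $1/\sqrt t$, integrability of $\dot\Theta$), but the proposal defers exactly the step on which the theorem hinges, so as written it has a genuine gap. The inequality $\dot E\le C(E+F)^{3/2}$ ``modulo sign-definite dissipation'' does not yield (\ref{FirstMain}): without the dissipative term the comparison ODE $\dot e=Ce^{3/2}$ blows up, so uniform-in-time smallness requires proving that the cubic terms are, \emph{after averaging over the oscillation}, negative definite. That is not automatic, because the normal oscillators are not decoupled copies of $\dot y\sim -y^3$: on the reduced dynamics each amplitude obeys $\dot a_k=-m\cos\theta_k+a_k^3A(\theta_k)+a_k\cos^2\theta_k\,\frac1n\sum_l a_l^2E(\theta_l)+\mathcal O(a_M^4)$ with $A\le 0$ but $E>0$, i.e.\ the all-to-all coupling is \emph{amplifying} and of the same cubic order as the self-damping. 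Deciding that damping wins requires the exact coefficients (which in the paper come out of the center-manifold computation, e.g.\ $\mu(A)=-59/200$ versus $\mu(\tfrac14(\cos^2\theta+E)^2)=437/1600$, net $-7/320<0$), a Lyapunov function corrected by bounded antiderivatives of the off-mean parts of the periodic coefficients so that the time-oscillating cubic terms can be replaced by their means, and the algebraic inequality (Proposition \ref{FancyIneq}) to control the product of the two coupled sums. None of this is supplied or even identified in your sketch; ``clean control of the cross-terms'' is precisely the theorem's content, not bookkeeping.

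Two further points would still block a completion along your lines. First, you cannot get the sign structure by expanding directly in the variables $w_k=r_k-R$, $u_k=\dot r_k-V$: at that level the normal equations contain quadratic interaction terms (of the type $z_kw_k$ with $z_k=u_k^{tang}$), and it is only after eliminating the exponentially damped variables via the center manifold (substituting their quadratic graphs $X_k,Z_k$ in $(w,y)$) that the effective cubic coefficients $A,E$ — and the fact that the angular drift $m=\frac{d\Theta}{dt}$ is $\mathcal O(a_M^3)$ rather than quadratic — emerge; the latter is what makes $\int_0^\infty|m|\,dt$ finite and small, which your $\int_0^\infty|\dot\Theta|\,dt<\epsilon/2$ step silently assumes. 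Second, the averaging step itself is obstructed by agents whose amplitudes are much smaller than $a_M$: for them $\dot\theta_k-1\sim m/a_k$ need not be small, their phases desynchronize, and their amplitudes can grow for long stretches (Figure \ref{AverageDecay}); the paper has to weight their contribution with the truncation functions $T_k$ (threshold $a_M^{2.75}$) to close the estimate $\dot W\le -KW^2$. Your two-timescale bootstrap has no mechanism for this, so even granting the averaged sign computation the argument would not close as stated.
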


The proof of Theorem \ref{MainTh} is completed in three parts, corresponding to three dimension reductions of the system: form $4n$ to $4n-3$ in   Section \ref{SectionMovingFrame}, to $2n$
in  Section \ref{Section_CentralManifold}, and to $n$ in Section \ref{Section_Stability}, with the stability of the translating states for (\ref{Main}) being equivalent to that of the origin for the reduced systems.

The dynamical system  produced in Section \ref{SectionMovingFrame} evolves in a $4n-3$ dimensional invariant subspace for a system taking the form
 $\dot w = Aw + G(w)$  where $A$ is the Jacobian matrix and $G$ is a nonlinear function whose Maclaurin  expansion has terms or order 3 or higher. Counted with multiplicity, the eigenvalues of $A$ consist of $n$ pairs of purely imaginary roots $\pm i$, and $2n-1$ eigenvalues with negative real parts.
The stability of the origin for systems  whose Jacobian matrices have  $q $ pairs  of purely imaginary roots $(\pm i \omega_1, \dots , \pm i \omega _q)$ and $d-2q$ roots with negative real part has been extensively investigated; the problem is commonly known as the critical case with purely imaginary roots. Most authors apply elaborate iterative transformations from normal form theory to obtain systems for which stability criteria such as Molchanov and Lyapunov's second method can be employed. These transformations require that the frequencies $\omega_k$ satisfy non-resonance conditions which are violated in our setting,(\cite{Zuyev},
\cite{Krasilnikov}), or alternatively, they assume the a priory existence of some quadratic
Lyapunov functions ( \cite{Grushkovskaya}), rendering their methods inapplicable here despite concluding similar decay rate of $1/\sqrt t$ towards the origin. Our approach 
draws inspiration from a class of time-dependent systems with cubic nonlinearities instead:

\beq
\dot x_k = b_k(t)x_k^3  + c_k(t) x_k  \frac{1}{n} \sum \limits_{l=1}^n d_l(t) x_l^2+ R_{k}(x, t), \; \mbox{ for }  k=1 \dots n,
\label{AlmostP1}
\eeq
where  $x=(x_1, \dots x_n)^T$ is vector-valued from $\R$ to $\R^n$, the coefficient functions $ b_k, c_k , d_k: \R \to \R$ are continuous and the remainder functions $R_k$ defined on $\R^n\times \R $ satisfy the $\textsl{o}_3$ condition: there exists a function $\epsilon :[0, \infty) \to \R$ with $\displaystyle \lim _{r->0} \frac{\epsilon (r)}{r^3}=0$ such that  $|R_k(x, t)| \leq \epsilon (|x|) $  for all $(x, t)\in \R^n\times \R .$
In Section \ref{Subsection_TimeDependent} we provide easily verifiable conditions on the coefficient functions that sufficiently ensure the asymptotic stability of the solution $x=\zn$, with convergence rate of order $\frac{1}{\sqrt t}.$ We adapt the arguments from Section \ref{Subsection_TimeDependent} to complete the proof of Theorem \ref{MainTh}, in Section \ref{SubSection_Stability}. We use the following to facilitate notations:

\begin{notation} (i)  Given a positive integer $d$ denote the column vectors with all-$1$ entries and all-$0$ entries in $\R^d$ by
\beq
\label{FundamVec}
\mathbb{1}_d= (1, 1, \dots 1)^T, \; \mbox{ and } \mathbb{0}_d= (0, 0, \dots 0)^T.
\eeq
 Given positive integers $ d_1, d_2$ denote the $d_1 \times d_2$ matrices of all ones and all zeros by $\displaystyle \mathbb{1}_{d_1, d_2}= \mathbb{1}_{d_1}\mathbb{1}_{d_2}^T, $ and
  $\displaystyle \mathbb{O}_{d_1, d_2}= \mathbb{0}_{d_1}\mathbb{0}_{d_2}^T. $ We may use just $\mathbb{O}$ for a matrix if its dimensions are clear form context.
  Denote the $d \times d$ identity matrix by $I_d.$ Denote by $H_d$ the hyperplane orthogonal to $\mathbb{1}_d $ in $\R^d.$

  (ii) Given (column) vectors $ a\in \R^{d_1}$ and $ b\in \R^{d_2}$ denote by $\mbox{col}(a, b) \in \R^{d_1+d_2}$ the column vector $(a_1, \dots a_{d_1}, b_1, \dots b_{d_2})^T.$
\end{notation}

We conclude this introductory subsection by examining how other authors interpret
stability for limit patterns and comparing their settings to our
results, as stated in Theorem \ref{MainTh}, Remarks \ref{remarkVinfty},  and
\ref{driftremark}, since there is no universally accepted definition.

For Cucker-Smale (CS) models with communication function $\psi$
\bdima
 \ddot r_k= \frac{1}{n}\sum_{j=1}^n \psi(|r_j-r_k|) (\dot r_j-\dot r_k)
 \edima
the generally accepted definition of {\it asymptotic flocking} requires two conditions:
\begin{itemize}
\item Velocity alignment: $\lim _{t \to \infty} \sum \limits_{j=1}^n |\dot r_j(t) -V(t)|^2 =0$
\item Forming a finite-diameter group: $\sup _{0\leq t < \infty} \sum \limits_{j=1}^n | r_j(t) -R(t)|^2 < \infty.$
\end{itemize}
Cucker-Smale models have constant mean velocity $V(t)=V(0)$ so the existence of a limiting value of $V$ is automatic; the problem reduces to an equivalent model with $V(t)=0.$ For the classical communications functions
$\psi (s) =\frac{\alpha}{s^\beta}$ and $\psi (s) =\frac{\alpha}{(1+s^2)^{\beta/2}}$ with $0 \leq \beta \leq 1$  it is proven in \cite{Liu} and \cite{CuckerSmale} that there is {\it unconditional flocking} in the sense that all solutions exhibit asymptotic flocking with $ |\dot r_j(t) -V(t)|$ being exponentially small.

For Kuramoto oscillators (KO)
\bdima
\dot \theta _k = \omega _k + \frac{K}{n}\sum_{j=1}^n \sin(\theta_j-\theta_k|) , \; \; \theta _k(t) \in \R
\edima
the mean frequency $\omega= \frac{1}{n}\sum_{j=1}^n \omega _j$ is constant, ensuring that the mean phase $\frac{1}{n}\sum_{j=1}^n \theta_j $ is linear in time.
For (KO) synchronization plays the role of (CS)'s velocity alignment,  requiring that  the angular frequencies $\dot \theta _k$  converge to the mean frequency.
It was shown that under some mild conditions on the initial distribution of phase angles, even non-identical oscillators synchronize \cite{Chopra}, and that the exponential rate of convergence proven in \cite{Kim} for identical oscillators (i.e. when all $\omega _k=\omega $ ) extends to more general systems. Not all systems synchronize, and some do at much slower rates. Moreover, \cite{Kim} shows that identical oscillators with initial phase angles confined in an semicircle have {\em phase} angles that  converge exponentially to a common $\theta_\infty.$

For a swarm of identical planar agents coupled with a power law potential as in  (\ref{gradient}), some authors  (\cite{Carrillo2}) define flocking configurations as being solutions where all agents have rectilinear motion, with a common velocity: $r_k(t)=r_{k,0} +v_0 t$ for all $k.$  Necessarily $ v_0= \sqrt{\frac{\alpha}{\beta}} e^{i \theta _0}$ for some $ \theta _0 \in [0, 2\pi).$
Let $r_{\star}=(r_{k,0}) _{k=1..n}$ in $\mathbb R ^{2n}$ denote the initial locations of a flocking configuration. In \cite{Carrillo2} the authors  define the stability of the flocking state    evolving from $r_{\star} $  by referring to a manifold of configurations. They define the {\it flock manifold} $ \mathcal F(r_{\star}) \subset \R^{4n}$ as
\bdima
\mathcal F(r_{\star})= \{ (e^{i \phi}
 r_{k,0}+ t\sqrt{\frac{\alpha}{\beta}} e^{i \theta}, \sqrt{\frac{\alpha}{\beta}} e^{i \theta}) _{k=1..n} , \mbox{ for all } \phi \in  [0, 2\pi), \theta \in [0, 2\pi) , t>0\}
\edima
and the 'local asymptotic stability' of $ \mathcal F(r_{\star}) $ as the existence of a $\delta-$ neighborhood $U$ of $ \mathcal F(r_{\star}) $ such that for initial conditions $r_0, \dot r _0$ in $U,$ the distance from the state variables $r(t), \dot r(t)$ to the manifold $ \mathcal F(r_{\star}) $ approaches zero as $t\to \infty.$ Note that this is a rather weak condition on the behavior of $V(t)$, as it only requires that $|V|\to \sqrt{\frac{\alpha}{\beta}}$ without imposing any conditions on $V$  having a steady direction.

\subsection{Obstacles on the way to traditional analysis}

One of the difficulties in working with translating and flocking states is the fact that they are neither fixed points nor periodic cycles. Moreover, for (\ref{Main}) the perturbations of the translating states have centers of mass that do not move with constant velocity (or in a predetermined direction) as in Cucker-Smale where excising a linear-in-$t$ term out of the motion transforms the flocking states into fixed  points.

In \cite{M-Popovici}  the authors prove that there exit constants $C_r, C_v, C_a$ that depend only on the number of particles, such that every solution to (\ref{Main} ) satisfies $|r_k(t)-R(t)|\leq C_r, \; |\dot r_k(t)-V(t)|\leq C_v,$ and $|\ddot r_k(t)-\dot V(t)|\leq C_a$ for large enough $t.$ This suggests two possible workarounds for the unboundedness of flocking states: study (\ref{VelocityAcc}) as a substitute for (\ref{Main}) or change the unknown functions of (\ref{Main} ) from $r_k$ and $\dot r_k$ to $r_k(t)-R(t)$ and $v_k=\dot r_k.$ Both approaches lead to dynamics evolving in a compact phase space, with the translating states of (\ref{Main} ) corresponding to fixed points with all velocities $v_k$ equal to the mean speed $V_0, $ a {\em unit} vector: $|V_0|=1.$

This subsection briefly addresses some remaining obstacles in pursuing these model modifications. Although this subsection helps set the tone for the dimension reduction and the proof of Theorem \ref{MainTh} that begins in  Section 2, the proof of the main results are independent of it.

\begin{remark} Any solution $\displaystyle \left ( r_k (t) \right )_{k=1..n}$  for (\ref{Main}) produces velocity vectors $v_k =\dot r_k$ that solve (\ref{VelocityAcc}). The converse is not true.
\end{remark}

\begin{proof} (\ref{VelocityAcc}) is obtained from differentiating (\ref{Main}). We provide a counterexample for the converse: let $V_0$  be any nonzero vector in the plane with $|V_0|\neq 1.$  Let $v_k (t) =V_0$ for all $t$ and all $ k=1..n.$  The vectors $v_k$ are a solution to (\ref{VelocityAcc}) that can't be realized as derivatives of solutions to (\ref{Main}), because if they did, they would be velocities for a flocking state whose asymptotic speed is neither zero nor one, an impossibility.
\end{proof}

\begin{proposition}
\label{Unstable}
Every fixed point of (\ref{VelocityAcc}) with $|V_0|=1$ is unstable.
\end{proposition}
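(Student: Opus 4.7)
My plan is to expose a genuine $2\times 2$ Jordan block at the zero eigenvalue in the linearization of (\ref{VelocityAcc}) at $(v_k,\dot v_k)=(V_0,0)$ and then promote the resulting linear-level growth to a nonlinear escape. Writing the perturbation as $v_k = V_0 + a_kV_0 + b_kV_0^\perp$, a direct first-order expansion using $|V_0|=1$ gives
\begin{equation*}
\ddot a_k + 2\dot a_k + (a_k-A) = 0, \qquad \ddot b_k + (b_k-B) = 0,
\end{equation*}
where $A=\tfrac 1 n\sum_j a_j$ and $B=\tfrac 1 n\sum_j b_j$. Averaging yields $\ddot A + 2\dot A = 0$ (stable in $\dot A$) and $\ddot B = 0$, so $B(t) = B(0)+t\dot B(0)$ — the signature of a size-$2$ Jordan block. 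The remaining eigenvalues of the Jacobian either have negative real part or are defect-free pure imaginaries (the bounded fluctuations $b_k-B$), so this Jordan block is the unique source of linear instability.

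To exhibit nonlinear instability, I would take the common-velocity perturbation $v_k(0) = V_0$, $\dot v_k(0) = \delta V_0^\perp$ for every $k$ and arbitrary small $\delta>0$. It is a $\delta$-perturbation of the fixed point, excites the Jordan block ($\dot B(0)=\delta\neq 0$), and lies in the invariant submanifold $\{v_1=\cdots=v_n\}$ on which the coupling term $v_k-V$ vanishes and (\ref{VelocityAcc}) collapses to
\begin{equation*}
\ddot v = (1-|v|^2)\dot v - 2(v\cdot \dot v)v, \qquad v\in\mathbb R^2.
\end{equation*}
In polar coordinates $v = \rho e^{i\phi}$, the angular momentum $J:= v\wedge\dot v = \rho^2\dot\phi$ satisfies $\dot J = (1-\rho^2)J$, so with $J(0)=\delta>0$ the sign of $J$ is preserved, forcing $\dot\phi>0$ and $\phi$ strictly increasing; at any critical point of $\rho$ one would need $\ddot\rho = J^2/\rho^3 > 0$, which rules out local maxima, so $\rho$ is non-decreasing and $\rho(t)\geq 1$; and the energy $h:=|\dot v|^2$ obeys $\dot h = 2(1-\rho^2)h - 2\rho^2\dot\rho^2 \leq 0$, giving the global bound $h\leq \delta^2$. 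These three structural facts provide the control needed to track the nonlinear orbit.

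The hard part is the last step: converting the linear-level growth $B(t)=\delta t$ into an actual escape of the \emph{nonlinear} orbit from a fixed neighborhood of $V_0$. The cubic correction to the $b$-equation contains cross-terms such as $-2a\dot b$ and $-2(1+a)b\dot a$ that can act as a restoring force, because $a$ is itself damped and driven quadratically by $b\dot b$; one must show these corrections do not turn the linear drift into a bounded excursion. The cleanest route is probably a Chetaev-type functional built from $b$, $\dot b$, and $J$ — positive on a wedge transverse to the fixed-point manifold and with sign-definite derivative along the flow — whose very existence rules out Lyapunov stability. Constructing such a functional, and verifying the sign of its derivative against the cubic cross-terms, is where the real technical effort lies.
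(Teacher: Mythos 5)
Your linear analysis is correct (the mean transverse component $B$ does sit in a $2\times 2$ nilpotent block), and your three structural facts about the common-velocity subsystem check out: $\dot J=(1-\rho^2)J$, every critical point of $\rho$ with $J\neq 0$ is a strict minimum so $\rho$ is nondecreasing with $\rho\geq 1$, and $h=|\dot v|^2$ is nonincreasing (the exact identity is $\dot h=2(1-\rho^2)h-4\rho^2\dot\rho^2$, but the sign is what matters). The genuine gap is that the proposal stops exactly where the proof of instability has to happen. A Jordan block at the zero eigenvalue does not imply Lyapunov instability of the nonlinear equilibrium: the zero eigenvalues here reflect the two-parameter family of fixed points $v_k\equiv\mathrm{const}$, and cubic terms can perfectly well convert linear drift into a bounded excursion (e.g. $\dot x=y$, $\dot y=-x^3-y^3$ has a nilpotent linearization and a stable origin). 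So the Chetaev-type functional you defer to is not a finishing technicality; it \emph{is} the proof, and as written the argument is incomplete.

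What you may not have noticed is that your own setup closes the gap without any Chetaev construction, via the same first integrals the paper uses. On the invariant set $v_1=\dots=v_n$ the vector $c=\dot v-(1-|v|^2)v$ is conserved (this is precisely how the paper passes from (\ref{EqualAV}) to the one-parameter planar family (\ref{EqualV})). With your data $v(0)=V_0$, $\dot v(0)=\delta V_0^\perp$ one gets $c\equiv\delta V_0^\perp$, so the orbit solves the \emph{first-order} planar equation $\dot v=(1-|v|^2)v+\delta V_0^\perp$. Your bound $1\leq\rho\leq 1+\mathcal O(\delta)$ pins the orbit to the unit circle, where the tangential component of the field at polar angle $\psi$ from $V_0$ is $\delta\cos\psi$; hence $\psi$ grows at a rate bounded below until it passes, say, $\pi/3$, and $v(t)$ leaves the ball of radius $1/2$ about $V_0$ (in fact it converges to the fixed point near $V_0^\perp$), which is the escape you need. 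For comparison, the paper perturbs \emph{along} $V_0$ rather than transversally (it fixes $c_{0y}=0$ and takes $c_{0x}=c_0<0$) and derives instability from the saddle and heteroclinic cycles of (\ref{EqualV}) near $(1,0)$; your transverse choice of $c$ yields an even more elementary mechanism (slow drift of the direction of the common velocity), but you must actually carry out this last step before the argument counts as a proof.
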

Note that the fixed points of (\ref{VelocityAcc}) with $|V_0|=1$  correspond to the translating solutions of (\ref{Main}).
We want to emphasise that the perturbations that destabilize (\ref{VelocityAcc}) don't correspond to derivatives of solutions to (\ref{Main}). The main theorem (\ref{MainTh}) in this paper makes this point.

\begin{proof} First note that a point $\displaystyle \left ( v_{k,0},\dot{ v}_{k,0} \right  )_{k=1..n}$  is a fixed point for
(\ref{VelocityAcc}) if and only if
 $ \dot{v}_{k,0}=0$ for all $k$ and  $  v_{k,0}= V_0 $ for all $k $, where  $V_0$ is the initial mean  velocity.  (Necessarily for such a fixed point  $\dot{v}_{k,0}=0$ for all $k$;
we get from (\ref{VelocityAcc}) that $0=v_{k,0}-V_0$ for all $k$ so all  the velocities are equal.)

In order to prove that the fixed points of the system (\ref{VelocityAcc}) with $|V_0|=1$ are unstable it is enough to do so when $V_0$ is the unit vector in the $x-$ axis direction (due to the rotation invariance). Note that the linear subspace
\newline $\{ \mbox{col}(c_1 \onen, c_2 \onen, c_3 \onen, c_4 \onen), \; c_1, c_2, c_3, c_4 \in \R\}$ of  $\R^{4n}$ is invariant under the flow of  (\ref{VelocityAcc}), allowing us work within the class of solutions
that satisfy $v_{k}(t)=V(t)$
for all $k=1..n$ and all $t$.
Denote by $v_x, v_y, a_{x}$ and $a_{y}$ the horizontal and vertical components of $V$  and  $\dot V$, respectively. In this class (\ref{VelocityAcc}) becomes:
\beq
\label{EqualAV}
\begin{array}{ll}
\dot{v_x}=& a_x \\
\dot{v_y}=& a_y\\
\dot{a_x}=&(1-3v_x^2-v_y^2)a_x-2v_x v_y a_y \\
\dot{a_y}=&(1-v_x^2-3v_y^2)a_y-2v_x v_y a_x .
\end{array}
\eeq
The system (\ref{EqualAV}) has two conserved quantities:
\bdima
a_x-(1-v_x^2-v_y)^2 v_x= c_{0x}  \; \mbox{ and } a_y-(1-v_x^2-v_y)^2 v_y= c_{0y}
\edima
where the constants of motion $c_{0x}=a_x(0)-(1-v_x(0)^2-v_y(0))^2 v_x(0) $ and $c_{0y}=a_y(0)-(1-v_x(0)^2-v_y(0))^2 v_y(0)$ are small if $v_x(0)$ is near one and $v_y(0)$ is near zero.
Fix the perturbation parameter  $c_{0y}=0$ and allow $c_0=c_{0x}$ to vary near zero; this reduces the study of (\ref{EqualAV}) to that of the 3-dimensional system
\beq
\label{EqualV}
\begin{array}{ll}
\dot{v_x}=& (1-v_x^2-v_y^2)v_x + c_{0}\\
\dot{v_y}=& (1-v_x^2-v_y^2) v_y\\
\dot{c_0}=& 0.
\end{array}
\eeq
For $c_{0}=0$ all the points from the unit circle in the $(v_x,v_y)$ plane are fixed points.
 For each $c_0 \neq 0$ , small, the system (\ref{EqualV}) has three fixed points, one fixed point $(v_\star(c_0), 0, c_0)$ near $M(1,0, 0)$ and two other fixed points near $(0,0, 0)$ and $(-1,0, 0).$
$ v_\star(c_0)$ satisfies $v_\star(1-v_\star^2)=-c_0.$

The system (\ref{EqualV}) can be seen as a one-parameter family of planar systems. Its stability and bifurcations are well known: a summary of the ensuing dynamics can be found in [Gu], page 70 (their parameter $\gamma $ is $-c_0$). For $c_{0}<0 $ small, the fixed point $(v_\star , 0)$ has a positive eigenvalue ($\lambda= 1-v_\star ^2 $), and the flow of (\ref{EqualV}) has heteroclinic cycles with a saddle connection near $(1,0)$, a stable node near $(-1,0)$ and an unstable node near $(0,0)$).

The locations of the aforementioned fixed points are shown Figure \ref{FigPertVA}, with $c_0$ on the vertical axis: the curve $(v_\star(c_0), 0, c_0)$ through $M(1,0, 0)$ is illustrated in red with two $\star$ markers; also in red are the almost-vertical curves of fixed points near the fixed points $(0,0)$ and $(-1,0)$ in the $c_0=0$ plane. The fixed points on the unit circle in the plane $c_0=0$ use $\times$ markers. All the other curves in Figure \ref{FigPertVA} are trajectories within three representative planes ($c_0<0, c_0=0, $ and $c_0>0$).

\begin{figure}[h!]
\centering
\includegraphics[width=.45\textwidth]{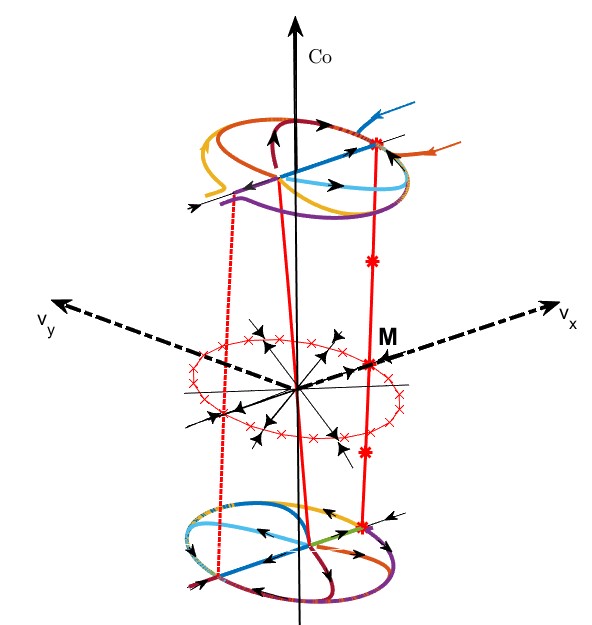}
\caption{
The flow of (\ref{EqualV}) for three  values of $c_0.$
The system (\ref{VelocityAcc}) subsumes (\ref{EqualV}). The dynamics near the fixed point of (\ref{VelocityAcc}) with $V_0=(1,0)$ corresponds to the dynamics of (\ref{EqualV}) near $M(1, 0, 0),$ which is unstable on account of having saddles and heteroclinic cycles near $M$ in the region $c_0<0.$ }
\label{FigPertVA}
\end{figure}

The dynamics of (\ref{EqualV}) in the region $c_0<0$ near $M(1,0,0)$ makes the fixed point $(1,0,0,0)$ of (\ref{EqualAV}) unstable; thus the fixed point $(\mathbb{1}_n, \mathbb{0}_n, \mathbb{0}_n,\mathbb{0}_n)$ of   (\ref{VelocityAcc}) is unstable.

\end{proof}

\vskip3mm The second natural approach for transforming the unbounded solutions of (\ref{Main}) and its translating states into bounded trajectories and fixed points is to work with $\sigma_k=r_k- R=(\sigma_{x,k}, \sigma_{y,k})$ and $\dot r_k= (v_{x,k}, v_{y,k}) $ as the unknown functions. Note that since $r_n -R =-\sum _{k=1}^{n-1} \sigma_k $ we can eliminate it from our unknowns, and work within a $4n-2$ dimensional phase space (if we were to keep the redundant $r_n -R$ as a state variable, we would have added two more zero eigenvalues to the system).

The system (\ref{Main}) becomes:

\beq
\label{SigmaV}
\begin{array}{ll}
\dot{\sigma_{x,k}}=& v_{x,k}-\frac{1}{n}\sum _{j=1}^n v_{x,j} \\
\dot{\sigma_{y,k}}=& v_{y,k}-\frac{1}{n}\sum _{j=1}^n v_{y,j} \\
\hline\\
\dot{v_{x,k}}=&(1-v_{x,k}^2-v_{y,k}^2)v_{x,k} - \sigma_{x,k} \\
\dot{v_{y,k}}=&(1-v_{x,k}^2-v_{y,k}^2)v_{y,k} - \sigma_{y,k} \\
\dot{v_{x,n}}=&(1-v_{x,n}^2-v_{y,n}^2)v_{x,n} +  \sum _{k=1}^{n-1} \sigma_{x,k} \\
\dot{v_{y,n}}=&(1-v_{x,k}^2-v_{y,k}^2)v_{y,n} + \sum _{k=1}^{n-1} \sigma_{y,k}
\end{array}
\eeq
where $k=1.. (n-1).$

The translating states of (\ref{Main}) with $V(t)=(\cos \theta _0, \sin \theta _0)$  correspond to the fixed points $P_{\theta _0}$ with
$\sigma_x = \sigma_y= \znn,$ and $ v_x = \cos \theta _0\; \onen, \; v_y= \sin \theta _0 \; \onen.$

  Focus on the dynamics near $P_0=\mbox{col}(\znn, \znn, \onenn, \znn, 1, 0).$ The other fixed points are obtained from $P_0$ via rotation of angle $\theta _0.$ The linearization of (\ref{SigmaV}) at $P_0$ yields the Jacobian matrix
\bdima
J=\left [\begin{array}{ll|ccrr}
\mathbb{O}& \mathbb{O} & B_{n-1}&\mathbb{O} &-\frac{1}{n}\onenn & \znn \\
\mathbb{O}& \mathbb{O} & \mathbb{O} & B_{n-1}&\znn & -\frac{1}{n}\onenn \\
\hline\\
-\Inn & \mathbb{O}& -2 \Inn & \mathbb{O}&\znn & \znn\\
\mathbb{O}& - \Inn & \mathbb{O} & \mathbb{O}& \znn & \znn\\
\onenn ^T & \znn ^T &   \znn ^T & \znn ^T &-2 &0\\
\znn ^T & \onenn ^T &  \znn ^T & \znn ^T & 0&0
\end{array}
\right ]
\edima
where $B_{n-1}$ is the square matrix $ \Inn-\frac{1}{n}\mathbb{1}_{n-1, n-1}$ and $\mathbb{O}=\mathbb{O}_{n-1, n-1}.$
We get that
$$ \mbox{det}(J-\lambda I_{4n-2}) = \lambda( 1+ \lambda^2)^{n-1}(2+\lambda)(1+\lambda)^{2n-2}.$$

The eigenvector for $\lambda =0$ is $\mbox{col}(\znn, \znn, \znn, \onenn, 0,1)$; it has the direction of the line tangent to the curve of fixed points  $\{ P_{\theta_0}, \; \theta_0 \in [0, 2\pi]\}$
at $P_0.$ The eigenvalue $\lambda =0$ captures the rotation invariance of the model.
Note that due to the $\pm i $ eigenvalues there are more degeneracies associated with (\ref{SigmaV}) than the symmetries of the problem.

If in the system (\ref{SigmaV}) all particles' $y-$components are initially zero, they stay zero forever, with the remaining $2n-1$ dimensional system in $\sigma_x, v_x, v_{x, n-1}$ having the Jacobian $J_x$ at $\sigma_x=\znn,  v_x =\onenn, v_{x, n-1}=1$ equal to
\bdima
J_x=\left [\begin{array}{lll}
\mathbb{O}&  B_{n-1}  & -\frac{1}{n}\onenn \\
-\Inn &  -2 \Inn &  \znn\\
\onenn ^T & \znn ^T  &-2
\end{array}
\right ]
\edima
and characteristic polynomial $(\lambda +2) (\lambda +1)^{2n-2}.$ In the absence of a $y-$ component, all trajectories approach the fixed point $P_0$ exponentially fast, meaning
 that the entire stable manifold for (\ref{SigmaV}) at $  P_0$ consists of the trajectories with no $y$ component.
 The flow on the  $2n-1$ dimensional central manifold captures the {\em drift} in the direction of motion, and the perturbations/ oscillations that are {\em orthogonal} to the direction of motion.

In order to understand the dynamics, one needs a suitable approximation of a central manifold map $h$ whose graph is a central manifold, typically done using Taylor polynomials (as in \cite{Carr}, pages 3-5).  Unfortunately, Taylor approximations are ill fitted to capture the correct scales of the flow in the presence of non-isolated fixed points, which is the case for (\ref{SigmaV}). A presentation of that inadequacy is in \cite{K-Popovici} \footnote{\cite{K-Popovici} provides an alternate construction for the central manifold for systems with non-isolated fixed points having no purely imaginary eigenvalues. Their method does not apply in the context of (\ref{SigmaV}).}  The predicament of having non-isolated fixed points and imaginary eigenvalues of high multiplicity requires we further refine the model until {\it all} the translating states are identified with a {\it single} fixed point. That is done in Section \ref{SectionMovingFrame}.

\begin{remark} If we allow the motion to take place in space (i.e replacing the assumption that $r_k \in \R^2$ from (\ref{Main}) with $r_k \in \R^3$), there exist initial conditions spawning solutions that fail the asymptotic conditions (\ref{MainZeroDr}) and (\ref{MainZeroV}) even though the initial conditions are arbitrarily close to those of flocking states.
\end{remark}
For example, starting from the flocking solution $r_k(t)=(0, 0, t) $ for all $t$ and $k$, given $\epsilon >0$ small, consider the perturbed initial conditions $r_{k,0}= (\epsilon \cos \theta_k, \epsilon \sin \theta_k, 0)$ and $\dot r_{k,0}= (-\epsilon \sin \theta_k, \epsilon \cos \theta_k,\sqrt{1-\epsilon^2})$ where $\theta_k$ are angles such that
$ \displaystyle \sum _{k=1}^n e^{i \theta k} =0.$ Note that for these perturbations $R(0)= (0,0,0)$ and $V(0)=(0,0, \sqrt{1-\epsilon^2}).$ It is immediate to verify that the particles in the $\R^3$ version of (\ref{Main}) move with unit speed on helix-shaped trajectories  according to
$$ r_k(t)= (\epsilon \cos (t+\theta_k), \epsilon \sin (t+\theta_k), t \sqrt{1-\epsilon^2}). $$
The perturbed trajectories have $R(t)= (0,0, t\sqrt{1-\epsilon^2})$ and therefore  $|r_k(t)-R(t)|= \epsilon$ and
$|\dot r_k(t)-V(t)|= \epsilon$ for all $t,$ failing the asymptotics (\ref{MainZeroDr}) and (\ref{MainZeroV}).

\vskip1cm

\section{Eliminating the Zero Eigenvalue(s)}\label{SectionMovingFrame}

One of the difficulties in working with the system (\ref{Main}) is the drift in the direction of motion, as illustrated in Figure \ref{Drift}.
For configurations having nonzero mean velocity ($V(t)\neq 0 $ for $t>0$), we  decouple the  direction of motion from the other variables of the system. We achieve this by using a frame that moves and rotates according to the center of mass, as illustrated in Figure \ref{Frame}. The new frame annihilates the directional drift and the rotation symmetry of the system, just as working with $r_k-R$ eliminated the translation invariance.  Since the rotation and translation invariance were associated with the multiplicity-three zero eigenvalue for the linearization of (\ref{Main}), the eigenvalues of the ensuing reframed dynamics will be shown to be nonzero (either equal to $\pm i $ or with negative real parts).

\begin{notation} Within this section, if $a=(a_1, a_2)$ and $b=(b_1,b_2)$  are vectors in $\R^2$ with $b \neq 0,$ define the vector $a^\bot$ and the scalars $\mbox{comp}_b a$ and $\mbox{ort}_b a$ to be:
\beq
\label{defineort}
a^\bot =(-a_2, a_1), \; \; \mbox{comp}_b a= \frac{a^T b}{|b|},
 \; \; \mbox{ort}_b a= \frac{a^T b^\bot}{|b|}.
 \eeq
 \end{notation}

 \noindent
Note that $\displaystyle (a^\bot)^\bot =-a, \; |a|^2=|a^\bot|^2=(\mbox{comp}_b a )^2+(\mbox{ort}_b a)^2$ and $\displaystyle \mbox{comp}_b b=|b|.$ Also:
\beq
\label{a_from_comp}
a= (\mbox{comp}_b a) \frac{b}{|b|} + (\mbox{ort}_b a)\frac{b^\bot}{|b|}.
\eeq

\begin{proposition}
Let $I$ be in interval and $a, b : I\to \R^2$  be differentiable functions, $b \neq 0.$ Then $\mbox{comp}_b a$ and $ \mbox{ort}_b a $ are differentiable and
\beq
\label{derivativesort}
\begin{array}{cl}
\frac{d}{dt} \left ( \mbox{comp}_b a \right ) &= \mbox{comp}_b \dot a + \frac{1}{|b|} ( \mbox{ort}_b a) ( \mbox{ort}_b \dot b) \\
\frac{d}{dt} \left( \mbox{ort}_b a \right ) & = \mbox{ort}_b \dot a - \frac{1}{|b|} ( \mbox{comp}_b a) ( \mbox{ort}_b \dot b).
\end{array}
\eeq

\end{proposition}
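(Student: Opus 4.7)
My plan is to treat each identity as a direct computation using the quotient rule, the product rule, and the explicit decomposition (\ref{a_from_comp}) applied either to $a$ or to $\dot b$ in the basis $\{b/|b|, b^\bot/|b|\}$. The only two auxiliary facts I will need up front are $\frac{d}{dt}|b|=b^T\dot b/|b|=\mbox{comp}_b\dot b$ (from differentiating $|b|=\sqrt{b^T b}$) and $\frac{d}{dt}(b^\bot)=(\dot b)^\bot$ (since $\bot$ is linear). I will also record the two elementary identities $b^T(\dot b)^\bot=-|b|\,\mbox{ort}_b\dot b$ and $(b^\bot)^T(\dot b)^\bot=b^T\dot b=|b|\,\mbox{comp}_b\dot b$, which follow from the antisymmetry $x^T y^\bot=-y^T x^\bot$ and from $(a^\bot)^\bot=-a$.

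For the first identity, I differentiate $\mbox{comp}_b a = (a^T b)/|b|$ by the quotient rule to get
\[
\tfrac{d}{dt}\!\left(\mbox{comp}_b a\right)=\tfrac{\dot a^T b+a^T\dot b}{|b|}-\tfrac{(a^T b)(\mbox{comp}_b\dot b)}{|b|^2}=\mbox{comp}_b\dot a+\tfrac{a^T\dot b}{|b|}-\tfrac{(\mbox{comp}_b a)(\mbox{comp}_b\dot b)}{|b|}.
\]
Then I expand $\dot b = (\mbox{comp}_b\dot b)\,b/|b|+(\mbox{ort}_b\dot b)\,b^\bot/|b|$ via (\ref{a_from_comp}), so that $a^T\dot b/|b|=(\mbox{comp}_b a)(\mbox{comp}_b\dot b)/|b|+(\mbox{ort}_b a)(\mbox{ort}_b\dot b)/|b|$. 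The two $(\mbox{comp}_b a)(\mbox{comp}_b\dot b)/|b|$ terms cancel, leaving exactly the stated formula.

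For the second identity I differentiate $\mbox{ort}_b a=(a^T b^\bot)/|b|$; the numerator contributes $\dot a^T b^\bot+a^T(\dot b)^\bot$ (using $\frac{d}{dt}(b^\bot)=(\dot b)^\bot$) and the denominator contributes the usual $-(\mbox{ort}_b a)(\mbox{comp}_b\dot b)/|b|$. Now I expand the remaining $a^T(\dot b)^\bot/|b|$, but this time by decomposing $a$ rather than $\dot b$: with $a=(\mbox{comp}_b a)\,b/|b|+(\mbox{ort}_b a)\,b^\bot/|b|$, the two auxiliary identities above give
\[
\tfrac{a^T(\dot b)^\bot}{|b|}=-(\mbox{comp}_b a)(\mbox{ort}_b\dot b)/|b|+(\mbox{ort}_b a)(\mbox{comp}_b\dot b)/|b|.
\]
The $(\mbox{ort}_b a)(\mbox{comp}_b\dot b)/|b|$ contribution cancels with the quotient-rule term, and what survives is the advertised expression.

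There is no real obstacle here; the statement is essentially a chain-rule exercise. The only mild subtlety is the asymmetry between the two formulas: it comes from the fact that in the first computation it is natural to resolve $\dot b$ in the rotating frame attached to $b$, while in the second it is cleaner to resolve $a$ in that frame. Choosing the right decomposition in each case makes the cancellations transparent; otherwise one is left with an $a^T\dot b$ or $a^T(\dot b)^\bot$ term that obscures the answer.
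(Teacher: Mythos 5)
Your computation is correct and is essentially the same direct argument as the paper's: both differentiate the defining expressions and resolve the relevant vector in the frame $\{b/|b|,\,b^\bot/|b|\}$ using $\frac{d}{dt}|b|=\mbox{comp}_b\dot b$. The only cosmetic difference is that the paper packages the work into the single intermediate identity $\frac{d}{dt}\frac{b}{|b|}=\frac{1}{|b|^2}(\mbox{ort}_b\dot b)\,b^\bot$ and substitutes it into the product rule, whereas you apply the quotient rule and cancel the $(\mbox{comp}_b a)(\mbox{comp}_b\dot b)/|b|$ (resp.\ $(\mbox{ort}_b a)(\mbox{comp}_b\dot b)/|b|$) terms by hand; the content is identical.
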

\begin{proof}

From (\ref{defineort}) and the fact that the derivative and $\bot$ commute,
\beq
\label{product_rule}
\begin{array}{cl}
\frac{d}{dt} \left ( \mbox{comp}_b a \right ) & =
(\dot a) ^T\frac{b}{|b|}+ a^T \frac{d}{dt} \frac{b}{|b|} =  \mbox{comp}_b \dot a + a^T \frac{d}{dt} \frac{b}{|b|}\\
\frac{d}{dt} \left( \mbox{ort}_b a \right ) & =
(\dot a)^T \frac{b^\bot}{|b|}+ a^T \left ( \frac{d}{dt} \frac{b}{|b|} \right )^\bot=
\mbox{ort}_b \dot a + a^T \left ( \frac{d}{dt} \frac{b}{|b|} \right )^\bot.
\end{array}
\eeq
From
$\displaystyle
\frac{d}{dt} \frac{b}{|b|} = \frac{1}{|b|}\dot b -  \frac{b^T \dot b  }{|b|^3}  b= \frac{1}{|b|}\left ((\mbox{comp}_b \dot b) \frac{b}{|b|} + (\mbox{ort}_b \dot b)\frac{b^\bot}{|b|}  \right)- \mbox{comp}_b \dot b \; \frac{b}{|b|^2}$
\newline
we get
\beq
\frac{d}{dt} \frac{b}{|b|} = \frac{1}{|b|^2} ( \mbox{ort}_b \dot b ) \;  b^\bot
\label{bversor}
\eeq
Substituting (\ref{bversor}) and $\displaystyle (b^\bot)^\bot =-b$  into (\ref{product_rule}) yields (\ref{derivativesort}).

\end{proof}

\begin{notation} For the system (\ref{Main}) with $V(t)\neq 0$, define the scalar functions of time $x_k, y_k, u_k, w_k$ and $m, s$ to be
\beq
\begin{array}{lcl}
x_k = \mbox{comp} _V (r_k-R) &\mbox{ and } &y_k= \mbox{ort} _V (r_k-R)\\
u_k= \mbox{comp} _V (\dot r_k)& & w_k= \mbox{ort} _V (\dot r_k)\\
s = \mbox{comp} _V (\dot V)& &  m= \frac{1}{|V|} \mbox{ort} _V (\dot V).
\end{array}
\label{newxy}
\eeq
\end{notation}

\noindent
The scalars $x_1, \dots w_n $ are the coordinates of the agents relative to a frame that moves along the center of mass, and rotates according to the direction of the mean velocity, as shown in Figure \ref{Frame}. $s$ and $m$ are introduced to streamline notations -- they can be expressed in terms of $x_1, \dots w_n $, as in (\ref{mandS1}) and (\ref{mandS2}).

\begin{figure}[h!]
\centering
\includegraphics[width=.7\textwidth]{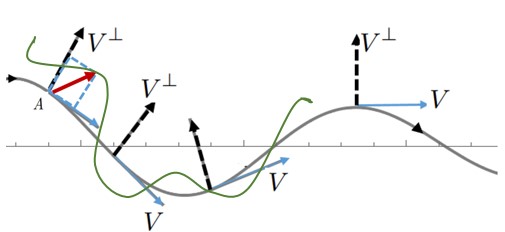}
\caption{ The moving frame used  to define $x_1, \dots w_n.$ The solid black curve represents the location of the center of mass, $R$; tangent to it are the velocity vectors $V$. The shorter curve illustrates the position of agent $1.$ The point $A$ shows center of mass at some time $t;$ at that time the displacement vector of the first agent is $r_1(t)-R(t),$  shown as the red diagonal vector. The sides of the dotted rectangle represent $x_1(t)$ and $y_1(t)$.}
\label{Frame}
\end{figure}

Note that $\displaystyle \dot V= s \frac{V}{|V|} +  m V^\bot $ thus  $m(t)$ captures how fast the direction of $V$ is changing {\it relative to the speed},  where $s(t)$ captures the changes in the speed of the center of mass.
The geometrical meanings of $m$ and $s$ are apparent if we represent the velocity $V$ in polar coordinates,
$V(t)=|V| (\cos \Theta (t), \sin \Theta (t)). $ Then
$$ \dot V= \frac{d |V|}{dt}(\cos \Theta (t), \sin \Theta (t))+ \frac{d\Theta}{dt}|V| ( -\sin \Theta (t)), \cos \Theta (t))=
\frac{d |V|}{dt} \frac{V}{|V|}+ \frac{d\Theta}{dt} V^\bot.$$
On the other hand $V= s \frac{V}{|V|}+ m V^\bot, $ therefore
$ \displaystyle s=\frac{d|V|}{dt} \; \mbox{ and } \; m=\frac{d\Theta}{dt}.$

From (\ref{MainAcc}) and (\ref{a_from_comp})  we get
$\displaystyle \dot V= \frac{1}{n}\sum_{k=1}^n (1-u_k^2-w_k^2)(u_k \frac{V}{|V|}+w_k\frac{V^\bot }{|V|}) .$  Collecting the tangential and normal components of the acceleration $\dot V$ as in (\ref{newxy}) yields
\beq
\label{mandS1}
s=\frac{1}{n}\sum_{k=1}^n (1-u_k^2-w_k^2)u_k = \frac{d|V|}{dt}\; \mbox{ and }
\eeq
\beq
\label{mandS2}
  m=\frac{1}{|V|} \frac{1}{n}\sum_{k=1}^n (1-u_k^2-w_k^2)w_k=\frac{d\Theta}{dt}.
\eeq
Before reformulating (\ref{Main}) in terms of  $x_1, \dots w_n$, we point out that $x_n$ will not be used as state variable, since
$\displaystyle
x_n=-\sum_{k=1}^{n-1} x_k,$ which follows from $\displaystyle \sum_{k=1}^n (r_k-R)=0 .$ The latter and $\displaystyle \frac{1}{n}\sum_k^n \dot r_k =V$ imply:
\beq
\sum_{k=1}^n x_k= \sum_{k=1}^n y_k=0,  \mbox{ and }  \sum_{k=1}^n w_k=0, \; \;  \frac{1}{n}\sum_k^n u_k=|V|.
\label{zerosums}
\eeq
Unlike $x_n,$ we keep $w_n$ and $y_n$ as state variables because eliminating them would hinder downstream computations (by contributing  off-pattern terms to the mean-field variables $V$ and $\dot V$ ). 

\begin{proposition}
The differential equations governing the $(4n-1)$ dynamical system with unknowns $w_1, \dots w_{n}$,  $y_1, \dots y_{n}$, $ x_1, \dots x_{n-1}$ and $u_1, \dots u_{n}$ are
\beq
\label{State}
\begin{array}{ll}
\dot w_k= (1-u_k^2-w_k^2)w_k- y_k -mu_k, \; &  k=1..n \\
\dot y_k= w_k-m x_k, \; &  k=1..n-1 \\
\dot y_n= w_n-m x_n= w_n-m ( -\sum_{k=1}^{n-1} x_k ) , &  \\
\dot x_k = u_k-\frac{1}{n}\sum_{k=1}^n u_k + m y_k,  \; &  k=1..n-1\\
\dot u_k = (1-u_k^2-w_k^2)u_k-x_k + m w_k ,  \; &  k=1.. n-1\\
\dot u_n = (1-u_n^2-w_n^2)u_n+\sum_{l=1}^{n-1}x_l + m w_n &
\end{array}
\eeq
where $m$ was defined in (\ref{mandS2}) as
$\displaystyle  m= \frac{1}{\frac{1}{n}\sum_{k=1}^n u_k} \frac{1}{n}\sum_{k=1}^n (1-u_k^2-w_k^2)w_k.$
\end{proposition}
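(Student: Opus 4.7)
The plan is to specialize the product-rule identities (\ref{derivativesort}) to the four defining relations in (\ref{newxy}), with $b=V$ in each case, and then to plug in (\ref{Main}). The key algebraic facts are $\text{comp}_V V = |V|$, $\text{ort}_V V = 0$, and (by the very definition of $m$) $\text{ort}_V \dot V = m\,|V|$. The equality $\frac{1}{n}\sum_k u_k = |V|$ from (\ref{zerosums}) will allow us to rewrite $|V|$ as a state-variable expression at the end.

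First I would compute $\dot x_k$ and $\dot y_k$ by taking $a=r_k-R$ (so $\dot a=\dot r_k-V$) and $b=V$. The first identity in (\ref{derivativesort}) yields
\[
\dot x_k=\text{comp}_V(\dot r_k-V)+\tfrac{1}{|V|}\,y_k\cdot m|V| = u_k-|V|+m\,y_k,
\]
which, after writing $|V|=\tfrac{1}{n}\sum_j u_j$, is the desired formula. The second identity yields $\dot y_k=w_k-m\,x_k$ since $\text{ort}_V V=0$. For the equation for $\dot y_n$ the same computation applies, and the stated form just substitutes $x_n=-\sum_{k=1}^{n-1}x_k$, which comes from $\sum_j(r_j-R)=0$.

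Next, for $\dot u_k$ and $\dot w_k$ I would take $a=\dot r_k$ (so $\dot a=\ddot r_k$) and $b=V$. The identities (\ref{derivativesort}) give
\[
\dot u_k = \text{comp}_V(\ddot r_k) + \tfrac{1}{|V|}\,w_k\cdot m|V|, \qquad \dot w_k = \text{ort}_V(\ddot r_k) - \tfrac{1}{|V|}\,u_k\cdot m|V|.
\]
Now I substitute (\ref{Main}): for $k\le n-1$, $\ddot r_k=(1-|\dot r_k|^2)\dot r_k-(r_k-R)$. Since $|\dot r_k|^2=u_k^2+w_k^2$, the tangential/normal projections produce $\text{comp}_V(\ddot r_k)=(1-u_k^2-w_k^2)u_k-x_k$ and $\text{ort}_V(\ddot r_k)=(1-u_k^2-w_k^2)w_k-y_k$, giving exactly the stated formulas $\dot u_k = (1-u_k^2-w_k^2)u_k-x_k+m\,w_k$ and $\dot w_k=(1-u_k^2-w_k^2)w_k-y_k-m\,u_k$.

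The only case needing care is $k=n$ in the $\dot u_n$ equation: here $r_n-R=-\sum_{k=1}^{n-1}(r_k-R)$, so $\text{comp}_V(r_n-R)=-\sum_{k=1}^{n-1}x_k$, producing the $+\sum_{l=1}^{n-1}x_l$ term in the stated formula. The corresponding $\dot w_n$ identity (with $y_n=-\sum_{k=1}^{n-1}y_k$) reduces to the same template as for $k\le n-1$. There is no real obstacle beyond careful bookkeeping of signs; the main thing to verify is that the $m$-terms line up correctly after using $\text{ort}_V\dot V=m|V|$, which is immediate from the definition of $m$ in (\ref{mandS2}). No separate derivation of the formula for $m$ in terms of the state variables is needed, as this was already established in (\ref{mandS1})--(\ref{mandS2}).
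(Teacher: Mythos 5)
Your proposal is correct and follows essentially the same route as the paper: apply the differentiation identities (\ref{derivativesort}) with $b=V$ (using $\mbox{ort}_V \dot V = m|V|$) first to $a=r_k-R$ and then to $a=\dot r_k$, substitute the equation of motion with $|\dot r_k|^2=u_k^2+w_k^2$, and rewrite $|V|=\frac{1}{n}\sum_k u_k$. The bookkeeping for the $k=n$ equations via $x_n=-\sum_{k=1}^{n-1}x_k$ matches the paper's statement as well.
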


\begin{proof}
Apply the results of (\ref{derivativesort}) for $ \displaystyle b=V, $  with $ \mbox{comp} _V (\dot V)=s,$ and
$ \mbox{ort} _V (\dot V)=m|V|.$ We get that for a differentiable function $a$
\beq
\label{derivprojected}
\begin{array}{ll}
\frac{d}{dt} \left ( \mbox{comp}_V a \right )= & \mbox{comp}_V \dot a + m \; \mbox{ort}_V a\\
\frac{d}{dt} \left ( \mbox{ort}_V a \right )= & \mbox{ort}_V \dot a -m \; \mbox{comp}_V a.
\end{array}
\eeq
Use (\ref{derivprojected})  for
$a=r_k-R $; recall that $\displaystyle  \mbox{comp}_V (r_k-R ) = x_k$ and
$\displaystyle  \mbox{ort}_V  (r_k-R )  = y_k.$ Moreover $\dot a= \dot r_k -V$ and thus $\dot a$ has components
$ \displaystyle \mbox{comp}_V \dot a= u_k-|V|$ and
$ \displaystyle \mbox{ort}_V \dot a= w_k.$ We get
\beq
\label{NewMain1}
\begin{array}{ll}
\dot x_k = u_k-|V|+ m y_k\\
\dot y_k=w_k-m x_k.
\end{array}
\eeq
Use (\ref{derivprojected}) again  for $ a=\dot r_k .$ Note that $\dot a=(1-u_k^2-w_k^2) \dot r_k-(r_k-R)$ so it has components
$ \displaystyle \mbox{comp}_V \dot a= (1-u_k^2-w_k^2)u_k-x_k$ and
$ \displaystyle \mbox{ort}_V \dot a= (1-u_k^2-w_k^2)w_k- y_k.$
We get
\beq
\label{NewMain2}
\begin{array}{lll}
\dot u_k = &(1-u_k^2-w_k^2)u_k-x_k + m w_k  &  k=1\dots n\\
\dot w_k= & (1-u_k^2-w_k^2)w_k- y_k -mu_k  &  k=1\dots n.
\end{array}
\eeq

\end{proof}

\begin{remark} The dynamical system $(\ref{State})$ is reflection invariant, in the sense that if $(w,y,x,u)$ is a solution for $(\ref{State})$, then $(-w, -y, x, u)$ is also a solution.

Let $H$ denote the hyperplane orthogonal to $\onen$ in $\R^n.$
The subspace $H\times H\times \R^{2n-1}$ is invariant under (\ref{State}).
\end{remark}
The reflection invariance holds since the functions  $mx$ and $mu$ are odd functions of $(w,y)$ and even in $(x,u)$ and thus the vector field given from $(\ref{State})$ has its $(\dot w, \dot y)$ components given by  odd function of $(w,y)$ and even in $(x,u),$ whereas the $(\dot x, \dot u)$ components of $F$ are even functions of $(w,y) $ and $(x,u).$
The invariance of $H\times H\times \R^{2n-1}$ follows from
 $\displaystyle \frac{1}{n} \sum_{k=1}^n \dot w_k = \frac{1}{n} \sum_{k=1}^n y_k $ and  $\displaystyle \frac{1}{n} \sum_{k=1}^n \dot y_k = \frac{1}{n} \sum_{k=1}^n w_k .$

\begin{remark} Any translating state $r_k (t)= (c_{0,1}, c_{0,2})+ te^{i \Theta_{0}} $ for  $k=1..n$ of (\ref{Main}) corresponds to the fixed point  $w_k=0, y_k=0$ and $u_k=1$ for $k=1..n$  and $x_k=0$ for $k=1..n-1$ of (\ref{State}). 
\end{remark}

\begin{proposition} Let $Z_f$ denote the fixed point of (\ref{State}) with $w_k=0, y_k=0, x_l=0$ and $u_k=1$ (for $k=1..n$ and $l=1..n-1$). Denote by  $J$  the matrix of the linearization of (\ref{State}) at $Z_f.$ Then
$$\mbox{det}(\lambda I -J) =(\lambda ^2+1) ^n(\lambda+1)^{2(n-1)}(\lambda+2).$$
The center subspace $E^c$ of $J$ has dimension $2n$ and is spanned by the variables $w, y.$ The stable subspace $E^s$ has dimension $2n-1$ and is spanned by the variables $x, u.$ There is no unstable subspace:
$ \displaystyle \R^{4n-1}= E^c\oplus E^s=\R^{2n}\oplus\R^{2n-1} .$

\end{proposition}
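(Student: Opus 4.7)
The plan is to show that the scalar $m$ defined in (\ref{mandS2}) vanishes \emph{to second order} at $Z_f$, so that every occurrence of $m$ in (\ref{State}) contributes nothing to the Jacobian. Its numerator $\frac{1}{n}\sum_k (1-u_k^2-w_k^2) w_k$ vanishes at $Z_f$ together with each of its partial derivatives (the gradient of the scalar expression $(1-u_k^2-w_k^2) w_k$ at $u_k = 1,\ w_k = 0$ is zero), while its denominator $\frac{1}{n}\sum_k u_k$ equals $1$ there; hence $m = O(\|\cdot\|^2)$ in a neighborhood of $Z_f$. Since the factors $x_k, y_k, w_k$ also vanish at $Z_f$ while each $u_k$ equals $1$, the products $m x_k$, $m y_k$, $m w_k$, $m u_k$ all drop out of the linearization.

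Once those terms are discarded, the linearization of (\ref{State}) at $Z_f$ decouples into a $(w,y)$ block and an $(x, \tilde u)$ block, where $\tilde u_k := u_k - 1$. Expanding $(1-u_k^2-w_k^2) u_k = -2\tilde u_k + O(2)$ and $(1-u_k^2-w_k^2) w_k = O(2)$, the $(w,y)$ equations reduce to $\dot w_k = -y_k$ and $\dot y_k = w_k$ for $k = 1, \ldots, n$, with no cross-coupling between indices. This block is a direct sum of $n$ planar rotations of angle $\pi/2$, hence it is diagonalizable over $\mathbb{C}$ with eigenvalues $\pm i$ of geometric (and algebraic) multiplicity $n$ each, contributing the factor $(\lambda^2+1)^n$.

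For the $(x, \tilde u)$ block, the linear map takes the form $\dot x = M \tilde u$, $\dot{\tilde u} = N x - 2\tilde u$, where $M$ is the $(n-1)\times n$ matrix with $M_{ij} = \delta_{ij} - 1/n$ and $N$ is the $n \times (n-1)$ matrix whose first $n-1$ rows form $-\Inn$ and whose last row is $\onenn^T$. Applying the Schur complement formula with the lower-right block $(\lambda+2)\In$, which is invertible for $\lambda \neq -2$, gives
\begin{equation*}
\det(\lambda I_{2n-1} - J_{xu}) \;=\; (\lambda+2)^n \det\bigl(\lambda \Inn - (\lambda+2)^{-1} MN\bigr).
\end{equation*}
The key identity is $MN = -\Inn$, which one checks from $(MN)_{ij} = \sum_{k=1}^{n-1}(\delta_{ik} - 1/n)(-\delta_{kj}) - 1/n = -\delta_{ij}$ for $i,j \in \{1, \ldots, n-1\}$. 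Substituting yields $(\lambda+2)^n \bigl(\lambda + (\lambda+2)^{-1}\bigr)^{n-1} = (\lambda+2)(\lambda+1)^{2(n-1)}$, and multiplying by the $(w,y)$ factor produces the advertised characteristic polynomial.

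The subspace statement then follows directly from the block structure: the coordinate subspace of the $w,y$ variables is $J$-invariant and carries only the semisimple eigenvalues $\pm i$, so it is precisely the center subspace $E^c$ of dimension $2n$; the complementary coordinate subspace of the $x,u$ variables is $J$-invariant of dimension $2n-1$ and carries only the strictly negative eigenvalues $-1$ and $-2$, so it is $E^s$, with no unstable part. The main obstacle in the argument is justifying that $m$ truly drops out of the linearization: because $m$ is defined as a quotient, one must separately verify that both the numerator and its gradient vanish at $Z_f$, so that no first-order contribution sneaks into the Taylor expansion via the $-m u_k$ term in the $\dot w_k$ equation.
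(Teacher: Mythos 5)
Your proposal is correct and follows essentially the same route as the paper: you neutralize the $m$-terms by observing that $m$ vanishes to second order at $Z_f$ (numerator and its gradient vanish, denominator equals one), arrive at the same block-diagonal Jacobian, and read off $(\lambda^2+1)^n$ from the decoupled $(w,y)$ block. The only divergence is in evaluating the $(x,u)$ block determinant, where the paper uses elementary row/column operations while you use a Schur complement together with the identity $MN=-\Inn$; both yield $(\lambda+2)(\lambda+1)^{2(n-1)}$, so the difference is purely computational.
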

\begin{proof}
 Note that at $Z_f$ the speed is
 $|V|= \frac{1}{n}\sum_{k=1}^n u_k=1.$ Near $Z_f$ all the terms $(1-u_k^2-w_k^2)w_k$ are quadratic or smaller
  since both $(1-u_k^2-w_k^2)$ and $w_k$ are zero at $Z_f,$ therefore  $(1-u_k^2-w_k^2)w_k$ have zero gradients. From
  $\displaystyle m= \frac{1}{|V|} \frac{1}{n}\sum_{k=1}^n (1-u_k^2-w_k^2)w_k$ we conclude that $m$ and the terms $m u_k, mx_k, my_k$ and $mw_k$ vanish and have zero gradient at $Z_f$ as well.
  The nonlinear term $(u_k -u_k^3- w_k^2 u_k)$ that is part of $\dot u_k$ is linearly approximated by $(-2)(u_k-1)$ at $u_k=1, w_k=0.$
  Finally, note that the pattern for $\displaystyle \frac{\partial \dot u_n}{\partial x_l} $ is different than that for the partial derivatives of $\dot u_k$ and $k=1..n-1$ so when using block notations for the Jacobian matrix, we have to separate away the last $u$ component from the rest.

The Jacobian matrix (\ref{State}) at $Z_f $ is
\beq
\label{Jacobian}
J= \left [
\begin{array}{ll|lll}
\mathbb{O}_n & -I_{n}& \mathbb{O}& \mathbb{O} &0_{n,1}\\
I_{n} & \mathbb{O}_n& \mathbb{O}&\mathbb{O} &0_{n,1} \\ \hline
&&&&\\
\mathbb{O}& \mathbb{O}& \mathbb{O_{n-1}} & B_{n-1} & \frac{-1}{n}\onenn\\
\mathbb{O} &\mathbb{O} & -\Inn &  -2\Inn & 0_{n-1,1}\\
0_{1, n} &0_{1, n}  & \onenn^T & 0_{1, n-1} &-2
\end{array}
\right ]
\eeq
where $B_{n-1}$ is the square matrix $ \Inn-\frac{1}{n}\mathbb{1}_{n-1, n-1}$;
 on the left of the separating line $\mathbb{O}$ means $ \mathbb{O}_{n-1, n}$ and on  the right of the separating line
 $\mathbb{O}$ means $\mathbb{O}_{n, n-1}$.
Note that $J$ is already in a block diagonal form.
The first block, of size $2n$, corresponding to the state variables $w$ and $y$, has characteristic polynomial $\displaystyle (\lambda ^2+1) ^n.$ The second block, of size $(2n-1)$, corresponding to the state variables $x$ and $u$ has characteristic polynomial
$\displaystyle (\lambda+1)^{2n-2}(\lambda+2).$ One can check that by performing the following operations: first subtract the last column from each column in the middle block, then add all the rows of the middle block to the last row (the identity matrix $I$ being $\Inn$):

\bdima
\left |
\begin{array}{lll}
\lambda I  &-I+\frac{1}{n}\mathbb{1} & \frac{1}{n}\onenn\\
 I&  (\lambda+2)I & 0_{n-1,1}\\
 -\onenn^T & 0_{1, n-1} &\lambda+2
\end{array}
\right |=
\left |
\begin{array}{lll}
\lambda I  &-I& \frac{1}{n}\onenn\\
 I&  (\lambda+2)I & 0_{n-1,1}\\
 -\onenn^T & -(\lambda+2)\onenn^T &\lambda+2
\end{array}
\right |=
\edima
\bdima=
\left |
\begin{array}{lll}
\lambda I  &-I& \frac{1}{n}\onenn\\
 I&  (\lambda+2)I & 0_{n-1,1}\\
 0 & 0 &\lambda+2
\end{array}
\right |= (\lambda+2)(\lambda^2+2\lambda+1)^{n-1}.
\edima
\end{proof}

\begin{notation}
Given an initial condition vector $Z_0$ in   $\R^{4n-1}$ denote by
$Z=\psi_t(Z_0)$ the solution to (\ref{State}). Given an initial condition vector $X_0$ in $\R^{4n}$ consisting of $r_k(0)$ and $v_k(0)$ for (\ref{Main}), denote by $\phi_t(X_0)$ the solution to (\ref{Main}).
\end{notation}

\begin{remark}
\label{recoverflow}
For the proof of the Main Theorem \ref{MainTh}, it is enough to show that the fixed point $Z_f=col(\zn,\zn,\mathbb{0}_{n-1}, \onen) $ is an asymptotic fixed point for (\ref{State}) on $H\times H \times \R^{2n-1}$ and that  the integral $\int _0^{\infty } |m| dt$ converges and it is small for solutions starting  near $Z_f$.
\end{remark}

\begin{proof}
The assumptions of Main Theorem \ref{MainTh} and the assertions (\ref{FirstMain}) and (\ref{MainZeroDr})  can be restated using $\psi_t =col( w, y, x, u)$ alone, since they are norm-based and
\bdima
|r_{k_1}- r_{k_2}|^2=|(r_{k_1}-R)-( r_{k_2}-R)|^2= (x_{k_1}- x_{k_2})^2+ (y_{k_1}- y_{k_2})^2
\edima
and
\bdima
|V|= \frac{1}{n}\sum_{k=1}^n u_k \; \mbox{ and } \; |\dot r_{k}-V|^2= (u_k-|V|)^2+w_k^2.
\edima
This turns the assertions (\ref{FirstMain}) and (\ref{MainZeroDr}) into statements about the asymptotic stability of the flow $\psi_t$ at $Z_f.$

For the assertion (\ref{MainZeroV}) we need to examine how, given a solution  $\psi_t(Z_0)$ in the $(4n-3)$ dimensional space $H\times H \times \R^{2n-1}$, we can identify the trajectories $\phi_t$ in $\R^{4n}$ that in the center-mass frame (\ref{newxy})
lead back to $\psi_t(Z_0)$ . The trajectories $\phi_t$ can only be determined up to a 3-dimensional parameter $c_0$, interpreted as a location $(c_{0,1}, c_{0,2})$ in $\R^2$ together with an angle $c_{0,3}=\Theta_0.$

For  $\psi_t=(x_1,  \dots u_n)$ in $H\times H \times \R^{2n-1}$, define $\displaystyle \Theta(t)=\Theta_0+ \int _0^t m\; d\tau$ where  $\displaystyle m=\frac{1}{|V|} \frac{1}{n}\sum_{k=1}^n (1-u_k^2-w_k^2)w_k $
(see (\ref{mandS2})). Note that $|\Theta(t)-\Theta_0|\leq \int _0^{\infty } | m| dt.$

  Rebuild $V$ using $V=|V|(\cos \Theta(t), \sin \Theta(t))$ and from it find
  \newline
$\displaystyle R(t)= (c_{0,1}, c_{0,2}) + \int _0^t V dt.$ Note that  $\Theta_0$ captures the indeterminacy due to the rotational invariance of (\ref{Main}) and $(c_{0,1}, c_{0,2})$ captures the indeterminacy due to the translation invariance of (\ref{Main}).
Finally, we can recover the  directions of the vectors from $\phi_t$ using
\beq
 r_k= R(t)+ x_k  \frac{V}{|V|}+y_k\frac{V^\bot }{|V|}\; \; \mbox{ and }\;  \dot r_k= u_k  \frac{V}{|V|}+w_k\frac{V^\bot }{|V|}.
 \label{recover}
 \eeq
 Once the convergence of $\lim _{t\to \infty} |V(t)|= 1$ is established, assertion (\ref{MainZeroV}) reduces to proving the convergence  of the angle $\Theta$, which would follow from
 the convergence of $\int _0^{\infty } |m| dt.$
\end{proof}

\begin{remark}\label{analytic}  Let $\mathcal U$ be the open subset of $\R^{4n-1}$ where $\frac{1}{n}\sum_{k=1}^n u_k>0 .$ Let $t_0, Z_0$ be such that $\psi_{t_0}(Z_0) \in \mathcal U.$ Then there exists a time interval $(t_0-\epsilon, t_0+\epsilon)$ such that the function $t \to \psi_t(Z_0) $ is analytic on that interval.
\end{remark}
This holds since the vector field (\ref{State}) is defined by algebraic operations on $m, w, y, x, u.$ The condition that $\psi_{t_0}(Z_0) \in \mathcal U$ ensures that the fraction
 $\displaystyle\frac{1}{|V|}= \frac{n}{\sum u_k} $ used to define $m$ in terms of $w, y, x,u$
 (see (\ref{mandS2})) is real analytic in a neighborhood of $\psi_{t_0}(Z_0).$

\subsection{The Polar Angles in the $(w_k, y_k)$ Planes}

In this subsection we are only interested in solutions $\psi_t (Z_0)$ with $t\geq 0$  that lie in the set $\mathcal U$ where  $\frac{1}{n}\sum_{k=1}^n u_k>0.$  
Consider the projection of $\psi_t (Z_0)$ on the plane $(w_k, y_k);$ denote by $\gamma _k$ the curve with parametrization $(w_k(t), y_k(t)).$

The upper block of $J$ shows that each $(w_k, y_k)$  plane is an eigenspace for $\pm i.$
It is natural to recast the dynamics in each of these planes using polar coordinates, $w_k+iy_k= a_k e^{i \theta _k} $('a' is for amplitude), but we run into a technical difficulty: we would like to find  \emph{continuous } functions $\theta _k(t)$ to represent $\gamma _k$ as $\displaystyle \gamma_k= \sqrt{w_k(t)^2+ y_k(t)^2} e^{i\theta_k (t)}.$ If $\gamma_k(t) \neq 0  $ for all $t,$ then a smooth polar angle  $\theta _k(t)$ exists. However, the property $\gamma_k=0$ is not invariant under the flow: the origin, $a_1=\dots=a_n=0$ is a fixed point, but points such as $(a_1, a_2, \dots 0, \dots a_n)$ are not, meaning that $\gamma _k $ could go {\em through} $(0,0)$ and in doing so it could enter then emerge at different polar angles, forcing a jump in $\theta_k$ (see Figure \ref{NotPolar}).

\begin{figure}[h!]
\centering
\includegraphics[width=.6\textwidth]{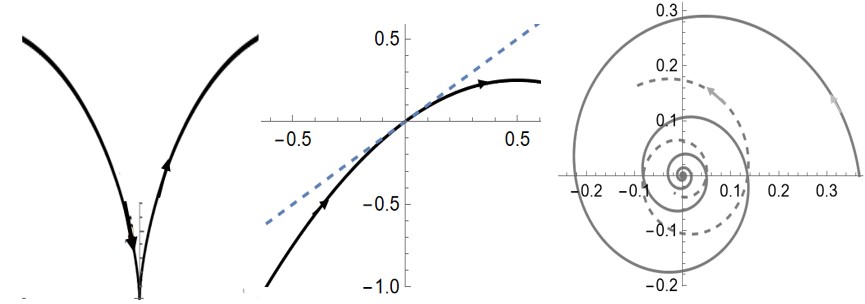}
\caption{ For curves with smooth parametrization a continuous polar angle $\t(t)$ may not exist if they go through the origin, as the illustrated curves do (when $t=0$).  Left: the cycloid  $(t -\sin t, 1 - \cos t)$ has $\t(0^-)=\pi/2=\t(0^+)$ so its polar angle is continuous. Middle: the parabola $(t, t-t^2)$ has its polar angle jump from $\t(0^-)=-3\pi/4$ to $ \t(0^+)= \pi/4.$ Right: the smooth curve $e^{-1/|t|} (\cos(1/t), \sin(1/t))$, shown as a solid curve for $t>0$ and dotted curve for $t<0.$ Its polar angle has $\t(0^+)=\infty,$ so it admits no continuous extension.}
\label{NotPolar}
\end{figure}
\begin{proposition}
\label{PieceAngle}
 (i) If $k, t_0$ and $\epsilon$ are such that $\gamma_{k} (t_0)=0$ with  $\gamma_{k}(t)\neq 0 $ for $0<|t-t_0|<\epsilon$ , then there exits a continuous function $\phi_k (t)$ such that one of the following is true
$$ \gamma_{k}(t)= |\gamma_{k}(t)|e^{i \phi_k(t)} \; \mbox{ for all } t \mbox{ with }  \; 0<|t-t_0|<\epsilon \; \; \; \; \; \; \; \mbox{ or}$$
\bdima
\gamma_{k}(t)= |\gamma_{k}|e^{i \phi_k(t)} \; \mbox{ for } t<t_0 \;  \;  { and }\; \;
\gamma_{k}(t)= |\gamma_{k}|e^{i \phi_k(t)+\pi i} \; \mbox{ for } t_0<t
\edima
(ii) Let $\gamma _k$ be such that the solution to $(\ref{State})$ remains in $\mathcal U$ for all $t>0,$ with $\gamma_k$ not identically zero. Then there exists a polar angle $\theta_k(t)$
such that $\displaystyle \gamma_k= |\gamma_k(t)|e^{i \theta_k(t)} $ and such that
the functions $\sin(2\theta_k(t))$ and $\cos(2\theta_k(t))$ are continuous. If $P$ is any continuously differentiable, periodic function with period $\pi$, the function $P(\theta _k (t))$ is continuous and piecewise differentiable.
\end{proposition}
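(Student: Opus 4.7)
My plan is to leverage the real-analyticity of $\gamma_k(t) := w_k(t) + i y_k(t)$, which follows from Remark \ref{analytic} together with the standing assumption of the subsection that the solution remains in $\mathcal{U}$ for all $t \geq 0$. Because $\gamma_k$ is real analytic, whenever $\gamma_k \not\equiv 0$ its zero set is locally discrete, and at any zero $t_0$ the function admits a Weierstrass-style factorization $\gamma_k(t) = (t-t_0)^{n_k} g_k(t)$, where $n_k \geq 1$ is the order of the zero and $g_k$ is a real-analytic $\mathbb{C}$-valued function with $g_k(t_0) \neq 0$. This single device will drive both parts of the argument.

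For part (i), I would shrink $\epsilon$ so that $g_k$ stays nonzero on $(t_0 - \epsilon, t_0 + \epsilon)$ and then pick a continuous branch of the argument, $g_k(t) = |g_k(t)| e^{i\alpha(t)}$, which exists because $g_k$ is continuous and avoids the origin. Substituting into the factorization gives $\gamma_k(t) = \mathrm{sgn}(t-t_0)^{n_k}\,|\gamma_k(t)|\,e^{i\alpha(t)}$. If $n_k$ is even, the sign is always $+1$ and $\phi_k := \alpha$ yields the first alternative of the statement. If $n_k$ is odd, the sign flips across $t_0$; writing $-1 = e^{i\pi}$ and setting $\phi_k := \alpha + \pi$ produces exactly the second alternative, with $\phi_k$ continuous on the whole interval and the required formulas holding on either side of $t_0$.

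For part (ii), the zero set $Z \subset [0,\infty)$ of $\gamma_k$ is locally finite by analyticity. On each open component of $[0,\infty) \setminus Z$ the nonvanishing of $\gamma_k$ lets me pick a smooth polar angle $\theta_k$ by analytic continuation of a local branch; I then glue across the points of $Z$ using part (i). The dichotomy there is exactly the dichotomy of whether $\theta_k$ stays continuous at $t_0$ or jumps by $\pi$. In both cases $e^{2i\theta_k(t)} = \gamma_k(t)^2/|\gamma_k(t)|^2$ is continuous at $t_0$, since the factorization yields $\gamma_k^2/|\gamma_k|^2 = g_k^2/|g_k|^2$, which extends continuously to $t_0$ because $g_k(t_0) \neq 0$. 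Thus $\cos(2\theta_k)$ and $\sin(2\theta_k)$ extend continuously to all of $[0,\infty)$.

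For the final claim, any $C^1$ function $P$ of period $\pi$ factors as $P(\theta) = \tilde P(\cos 2\theta, \sin 2\theta)$ for some $C^1$ function $\tilde P$ on the unit circle, so $P(\theta_k(t))$ inherits continuity from $\cos 2\theta_k$ and $\sin 2\theta_k$. Piecewise differentiability follows because $\theta_k$, and hence $P \circ \theta_k$, is smooth on each open interval between consecutive zeros, and those zeros are only finitely many on any bounded time interval. The main obstacle I anticipate is the clean handling of the odd-order case in (i), where a naive polar angle genuinely jumps by $\pi$; the Weierstrass-style factorization both makes this dichotomy transparent and supplies the extension mechanism that drives the continuity argument in (ii).
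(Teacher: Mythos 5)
Your proof is correct, and it rests on the same underlying engine as the paper's -- real-analyticity of $(w_k,y_k)$ at a zero of $\gamma_k$ -- but it packages the local analysis differently. The paper compares the orders $d_1,d_2$ of the first nonvanishing Taylor terms of $w_k$ and $y_k$ separately, invokes injectivity of the tangent modulo $\pi$ to conclude the one-sided limits of the polar angle differ by some multiple $p\pi$, and then corrects with a Heaviside jump $\phi_k(t)=\alpha(t)-p\pi H(t-t_0)$; for part (ii) it gets continuity of $\sin 2\theta_k,\cos 2\theta_k$ from "double angles coincide mod $2\pi$" and handles $P$ via a uniformly convergent Fourier series. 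You instead factor the complex-valued analytic function as $\gamma_k(t)=(t-t_0)^{n_k}g_k(t)$ with $g_k(t_0)\neq 0$, which buys you three things: the dichotomy in (i) is exactly the parity of $n_k$ (so the jump is precisely $0$ or $\pi$, with no case analysis on $d_1$ versus $d_2$ and no appeal to tangent injectivity); the continuity of the double angle in (ii) falls out of the identity $\gamma_k^2/|\gamma_k|^2=g_k^2/|g_k|^2$, which extends across $t_0$ because $g_k$ does not vanish there; and the final claim follows by descending the $\pi$-periodic $C^1$ function $P$ to a $C^1$ function of $(\cos 2\theta,\sin 2\theta)$ on the circle rather than summing a Fourier series. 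The paper's route is more elementary and stays in the original real coordinates, which matches the phrasing of the statement, but your factorization is the cleaner mechanism and also silently fixes the point (only implicit in the paper) that mere tangency is not enough for the polar angle to have one-sided limits -- compare the third curve in Figure \ref{NotPolar} -- whereas the nonvanishing of $g_k$ is. The only detail you leave tacit, as does the paper, is the choice of the value $\theta_k(t_0)$ at a zero time (e.g.\ the one-sided limit), needed so that $\sin 2\theta_k,\cos 2\theta_k$ are continuous \emph{at} $t_0$ and not merely extendable there; this is a one-line fix, not a gap.
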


\begin{proof}
  (i)
  We will use the analyticity of $w_k$ and $y_k$  and the injectivity of the tangent modulo $\pi$ ( $\tan \beta_1=\tan \beta_2$ implies $\beta_1-\beta_2 =0 \; \mbox{mod }\pi$).

   A polar angle $\alpha $ is well defined and continuous on $(t_0-\epsilon, t_0)$ and on $(t_0, t_0+\epsilon),$ due to $\gamma _k\neq 0$. We need to show that at $t_0$ the angle has sided limits that are equal or that differ by a multiple of $\pi.$

  If one of $w_k$ or $y_k$ is identically zero, $\gamma _k$ is along the coordinate axis; its polar angle is either $\{0, \pi\}$ or $\pm \pi/2.$ If neither $w_k$ nor $y_k$ are identically zero let
 $c_1(t-t_0)^{d_1}$ be the first nonzero term in the Taylor series for $w_k$ and let $c_2(t-t_0)^{d_2}$ be the first nonzero term for $y_k.$ If $d_1=d_2$ then $\gamma_k$ is tangent to the vector $(c_1, c_2)$ so the two sided limits of $\alpha $ exist and are in the set $\arctan(c_2/c_1)+\pi \mathbb Z.$ If $d_1<d_2$ then $\gamma_k$ is tangent to the $y_k$ axis and the polar angles have sided limits in the set $-\pi/2+\pi \mathbb Z.$ If $d_2<d_1$ then $\gamma_k$ is tangent to the $w_k$ axis and the polar angles have sided limits in the set $\pi \mathbb Z, $ so they differ by a multiple of $\pi.$
Let $p\pi= \alpha (t_0^+)-\alpha (t_0^-) $ and let $H$ be the Heaviside function. Then $\phi _k (t)=\alpha (t)-p\pi H(t-t_0)$ is continuous with $\gamma_{k_0}(t)= \pm |\gamma_{k_0}(t)|e^{i \phi_k(t)}.$

For (ii): there is nothing to prove if $\gamma_k$ is nonzero. Assume that $\gamma_k= 0$ at some positive times.
Due to the analyticity of $\gamma _k$ the set of times when $\gamma_k= 0$ is finite or consists of times  $0\leq t_1<t_2< \dots $ going to infinity. By (i), the polar angles corresponding to the intervals $(0, t_1),\; (t_1, t_2), \dots $ have well defined  side limits that differ by $0 \pi$ or $\pi$ at $t_1, t_2, \dots, $ meaning that the double angles coincide modulo $2\pi.$ This implies that $\sin(2\phi_k(t))$ and $\cos(2\phi_k(t))$ are continuous. The continuity properties for $P$ follow since $P$ is the sum of a Fourier series that converges in the sup norm.
\end{proof}

\begin{notation}
\label{reftheta}
(i) For the components $ w_k(t), y_k(t)$ of a solution to $(\ref{State})$ denote by
$a_k(t)=\sqrt{w_k^2+y_k^2} $ and by $\theta_k (t)$ the usual continuous polar coordinates if $(w_k(t), y_k(t)) \neq (0,0)$;  let $\theta_k(t)$ be the piecewise smooth polar angle from Remark \ref{PieceAngle}, part (ii) if $ (w_k(t), y_k(t))$ is not the identically zero function, and let $a_k=0, \theta_k(t)=t$ if $ (w_k(t), y_k(t))=(0,0)$ for all $t.$
Thus
\beq
\label{deftheta}
w_k= a_k \cos \theta_k, \; y_k= a_k \sin \theta_k.
\eeq

(ii) Denote by $a_{Max}$ or simply $a_M$ the Lipschitz continuous function
\beq
\label{amax}
a_M(t)=\max\{ a_k(t), k=1, .. n\}.
\eeq
\end{notation}
Note that for all smooth functions $P$ of period $\pi$  the functions $P(\theta _k(t))$ are continuous and piecewise smooth.

We conclude this section by  noting that although we will only need the polar coordinates representation  for the trajectories confined to the central manifold, we had to introduce $\theta_k$  while working with analytic functions of $t.$ Central manifolds and their flows are not necessarily $C^\infty,$  let alone analytic.

\section{The Dynamics on The Central Manifold}\label{Section_CentralManifold}

We shift coordinates in order to bring the fixed point $Z_f$ at the origin of $\R^{4n-1}: $
\begin{notation} Let $z_k=u_k-1$ for $k=1..n$ and let $\bar z=\frac{1}{n}\sum\limits_{k=1}^n z_k $ (thus $\bar z=|V|-1).$
\newline
Let $F=F(w, y,x, z)$ denote the vector field in $\R^{4n-1}$ given by
\beq
\label{State0}
F =
\left [
\begin{array}{l}
(-2z_k-z_k^2-w_k^2)w_k- y_k -m(1+z_k)   \\
  w_k-m x_k   \\
  \hline
 z_k-\frac{1}{n}\sum \limits_{k=1}^n z_l + m y_k  \\
 (-2z_k-z_k^2-w_k^2)(1+z_k)-x_k + m w_k  \\
 (-2z_n-z_n^2-w_n^2)(1+z_n)+\sum \limits_{l=1}^{n-1}x_l +m w_n
\end{array}
\right ].
\eeq
where  $\displaystyle m = \frac{1}{|V|}\frac{1}{n}\sum\limits_{l=1}^n (1-u_l^2-w_l^2)w_l =
\frac{1}{1+\frac{1}{n}\sum\limits_{l=1}^n z_l}\frac{1}{n}\sum\limits_{l=1}^n (-2z_k-z_k^2-w_k^2)w_l $ , 
with the index $k$ being in $1..n$ for the top components, and $k$ in $1..n-1$ for the components below the line.

\end{notation}

\noindent
Note that we used  $(1-u_k^2-w_k^2)=(-2z_k-z_k^2-w_k^2).$
Also: $\overline z= |V|-1$ gives the rate of convergence of the mean field speed to one. We get
\beq
\label{mdefine}
m= \frac{1}{1+\overline z} \frac{1}{n} \sum_{k=1}^n (-2z_k-z_k^2-w_k^2)w_k.
\eeq
In the $(w, y,x, z)$ coordinates the $4n-1$ dynamical system (\ref{State}) becomes
$$ col(\dot w, \dot y, \dot x, \dot z)= F(w, y, x, z).$$
An alternate expression for  $\dot x_k$ is
$\displaystyle
\dot x_k =
 z_k-\overline z + m y_k .$

 We want to prove that the system (\ref{State0}) has the origin as an  asymptotically stable fixed point.
The Jacobian matrix $DF$ for (\ref{State0}) at the origin is the same as that of (\ref{State}) at the point $Z_f, $ given in (\ref{Jacobian}), meaning that the central subspace for $DF(0)$ is associated with the eigenvalues $\pm i$, has dimension $2n$,  and it consists of the points $col(w, y, 0, 0).$

\subsection{Approximation of the Map of The Central Manifold}

This subsection uses the  standard PDE technique from \cite{Carr} to find a Taylor approximation for the central manifold map of (\ref{State0}) and to obtain differential equations for the flow on the central manifold.

  All the calculations from this subsection and beyond describe the solutions of  (\ref{State0}) for as long as they stay within a small distance $\delta$ from the origin.  $\delta$ is assumed small enough so that the local central manifold is defined within the ball $B_\delta$,  with the central manifold map $h$ having continuous $5^{th}$ order derivatives.

 We use the customary big-O notation $\mco(|\xi|^q)$ to convey a function that has absolute value at most a positive multiple of $|\xi|^q$. For example both $|w|$ and $|y|$ are $\mco( a_M),$ since $a_M$ is the largest amplitude among $(w_k, y_k).$

\begin{notation}
For a positive $q$ we use  $\mco_q$ to denote $\mco( a_M^q).$
\end{notation}

\begin{theorem} The flow on the central manifold $W^c$  of (\ref{State0}) is governed by the $2n$ dimensional system
\beq
\label{ODEwy}
\begin{array}{l}

\dot w_k=-y_k-m +w_k s_k+ w_k\mco_4 + m \mco_2, \\
\dot y_k= w_k+m\mco_2  \; \; \; \; \; \; \; \; \; \; \; \; \; \; \; \; \; \; \; \; \; \; \; \; \; \; \; \; \; \; \; \; \; \; \;  \; \; \; \; \mbox{ for } k=1..n, \; \mbox{ where } \\
\\
s_k=-\left( \frac{17}{25}w_k^2-\frac{12}{25}w_k y_k+ \frac{8}{25}y_k^2 \right ) +
 \left ( \frac{43}{200} \sww+\frac{1}{100} \swy +\frac{57}{200}\syy \right ) \\
 \\
 \sww= \frac{1}{n}\sum \limits_{k=1}^n w_k^2, \; \; \; \swy= \frac{1}{n}\sum \limits_{k=1}^n w_k y_k, \; \;
\syy= \frac{1}{n}\sum \limits_{k=1}^n y_k^2 \; \; \; \mbox{ and }\\
 m=  \frac{1}{n}\sum \limits_{k=1}^n s_kw_k +\mco_5=\mco _3.
\end{array}
\eeq
\end{theorem}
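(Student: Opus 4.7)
The plan is to invoke the center manifold theorem (as in Carr \cite{Carr}, Ch. 1--2) in the splitting $\mathbb R^{4n-1}=E^c\oplus E^s$ identified above, then approximate the center manifold map to the order needed to produce a cubic reduced system, and finally read off the coefficients $s_k$ and $m$ from the matching calculation.

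\textbf{Step 1: setup.} Write $\xi=(w,y)\in\mathbb R^{2n}$ for the central variables and $\eta=(x,z)\in\mathbb R^{2n-1}$ for the stable variables, so that (\ref{State0}) takes the form $\dot\xi=A_c\xi+f(\xi,\eta)$, $\dot\eta=A_s\eta+g(\xi,\eta)$, with $A_c$ the block $\bigl[\begin{smallmatrix}\mathbb O&-I_n\\I_n&\mathbb O\end{smallmatrix}\bigr]$ and $A_s$ the lower-right block of (\ref{Jacobian}); the nonlinearities $f$ and $g$ vanish to order $2$ at the origin. Since $A_s$ has eigenvalues $\{-1,-2\}$ and $A_c$ has eigenvalues $\{\pm i\}$, Carr's theorem gives a local $C^5$ center manifold $W^c=\{(\xi,h(\xi)):|\xi|<\delta\}$ with $h(0)=0$, $Dh(0)=0$, and the reduced flow is $\dot\xi=A_c\xi+f(\xi,h(\xi))$.

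\textbf{Step 2: the invariance PDE.} The map $h$ satisfies
\[
Dh(\xi)\bigl[A_c\xi+f(\xi,h(\xi))\bigr]=A_sh(\xi)+g(\xi,h(\xi)).
\]
I would expand $h=h_2+h_3+h_4+\mco_5$, where $h_j$ is homogeneous of degree $j$ in $\xi$, and match orders. The homological operator at order $j$ acts on symmetric $j$-forms as $H\mapsto DH\cdot A_c\xi-A_sH$; because the spectra of $A_c$ and $A_s$ satisfy the non-resonance condition $i(\alpha_{k_1}+\cdots+\alpha_{k_j})\notin\mathrm{spec}(A_s)$ for any $\alpha_\ell\in\{\pm i\}$ and $j\le 4$ (the left side has real part $0$, the right side $-1$ or $-2$), the operator is invertible on each homogeneous piece, so $h_2,h_3,h_4$ are uniquely determined by the corresponding homogeneous parts of $g$.

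\textbf{Step 3: quadratic piece.} Only $h_2$ is needed to get the cubic reduced ODE. The quadratic part of $g$ comes from $-w_k^2$ (inside $\dot z_k$ and $\dot z_n$) and $m\cdot(\text{linear})$, but $m=\mco_3$, so the order-two forcing involves only the $-w_k^2$ terms and the $-y_k$/$w_k$ rotation coupling $A_c\xi$. Solving the resulting linear system for $h_2$ against the block structure $\bigl[B_{n-1},\;-2I_{n-1},\;-\tfrac{1}{n}\mathbf 1\bigr]$, $\bigl[-I_{n-1},\;-2I_{n-1}\bigr]$, $\bigl[\mathbf 1^T,0,-2\bigr]$ yields explicit bilinear expressions
\[
x_k=\alpha\,w_k^2+\beta\,w_ky_k+\gamma\,y_k^2+\alpha'\sww+\beta'\swy+\gamma'\syy+\mco_3,\qquad
z_k=\text{similar},
\]
for universal constants depending only on the $2\times 2$ diagonal blocks (because of the all-to-all symmetry, cross terms $w_jw_k$ with $j\neq k$ enter only through the symmetric means $\sww,\swy,\syy$). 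Straightforward linear algebra with the $5\times 5$ system that couples the five monomial classes $\{w_k^2,w_ky_k,y_k^2,\sww,\syy\}$ (with $\swy$ decoupled) produces the specific rationals $\tfrac{17}{25},\tfrac{12}{25},\tfrac{8}{25},\tfrac{43}{200},\tfrac{1}{100},\tfrac{57}{200}$.

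\textbf{Step 4: assembling the reduced system.} Substituting $(x,z)=h(w,y)$ into $\dot w_k=(-2z_k-z_k^2-w_k^2)w_k-y_k-m(1+z_k)$ collapses the cubic self-propulsion term $-2z_kw_k-w_k^3$ into $w_ks_k+w_k\mco_4$ with $s_k$ as claimed; the remaining pieces $-z_k^2w_k$ and $m\cdot z_k$ are $\mco_4$ and $m\mco_2$ respectively. For $\dot y_k=w_k-mx_k$ the only nonlinear contribution is $-m\,x_k=m\mco_2$ since $x_k\in\mco_2$ on $W^c$. Finally plug the approximations for $z_k$ and $w_k^2$ into (\ref{mdefine}); the leading piece $-\tfrac1n\sum w_l^3$ combines with the $-2z_lw_l$ contribution to give $m=\tfrac1n\sum s_kw_k+\mco_5$, and in particular $m=\mco_3$.

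\textbf{Main obstacle.} The bookkeeping in Step 3 is the principal hurdle: one must carefully separate the diagonal contributions (quadratic in a single $(w_k,y_k)$) from the mean-field contributions (quadratic in the averages $\sww,\swy,\syy$), because the coupling matrix $B_{n-1}$ and the asymmetry of the last index $n$ mix the two. Everything else is a routine normal-form computation ensured to close by the non-resonance of $A_c,A_s$.
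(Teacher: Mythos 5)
Your overall strategy is the same as the paper's (invoke Carr's center manifold theorem, determine the low-order Taylor coefficients of the manifold map $h$ from the invariance equation, substitute back into the $(w,y)$ equations), but as written there is a concrete gap in the error accounting. The theorem claims $\dot w_k=-y_k-m+w_ks_k+w_k\mco_4+m\mco_2$ and $m=\frac1n\sum s_kw_k+\mco_5$ with $s_k$ \emph{quadratic}; since $z_k$ enters $\dot w_k$ multiplied by $w_k$ (and enters $m$ multiplied by $w_l$), an error of size $\mco_3$ in the approximation of $z_k$ would produce stray terms of size $w_k\mco_3$ and $\mco_4$, which are not absorbed by the stated remainders. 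So the quadratic approximation $h_2$ suffices only if one also shows that $h$ has \emph{no cubic part}. Your Step 3 claim that ``only $h_2$ is needed'' skips exactly this point. The paper settles it with the reflection symmetry $(w,y,x,z)\mapsto(-w,-y,x,z)$ of the vector field, which allows $h$ to be chosen even, so all odd-order Taylor terms vanish; alternatively, in your framework one must check that the cubic forcing in the order-3 homological equation is zero (it is, because the only quadratic forcings in the $(\dot x,\dot z)$ block are the $-w_k^2$ terms and everything else, e.g. $my_k$, $mw_k$, $z_k^2$, $w_k^2z_k$, is of order $\ge 4$ on the manifold), whence $h_3=0$ by the invertibility you invoke. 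Either argument must appear; without it the stated form of (\ref{ODEwy}) does not follow.

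Two further points on Step 3. First, the structure of the linear matching problem is misdescribed: $\swy$ does not decouple, since the rotation operator $Lf=-(D_wf)y+(D_yf)w$ acts by $L\sww=-2\swy$, $L\swy=\sww-\syy$, $L\syy=2\swy$ (and analogously mixes $w_k^2$, $w_ky_k$, $y_k^2$); the paper's computation splits instead into a $3\times3$ system for the mean part $\overline Z$ in the basis $(\sww,\swy,\syy)$ and a $6\times 6$ system for the centered parts $(X_k,\,Z_k-\overline Z)$ in the basis $(w_k^2-\sww,\,w_ky_k-\swy,\,y_k^2-\syy)$, which is where the all-to-all coupling and the special role of index $n$ are absorbed. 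Second, the specific rationals $\frac{17}{25},\frac{12}{25},\frac{8}{25},\frac{43}{200},\frac{1}{100},\frac{57}{200}$ are the quantitative content of the theorem (the later stability argument depends on the resulting means such as $-\frac{59}{200}$ and $\frac{437}{1600}$), so asserting that ``straightforward linear algebra produces'' them, without exhibiting the system and its solution (the paper obtains $\overline c=[-\frac38,-\frac14,-\frac18]$, $c_X=[-\frac{11}{25},-\frac4{25},-\frac{14}{25}]$, $c_Z=[-\frac4{25},-\frac6{25},\frac4{25}]$), leaves the key step unverified.
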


\begin{proof} Let $h$ denote the map whose graph is the central manifold, as in \cite{Carr}. In our notations,
 the central manifold is described by $\displaystyle col(x, z)=h(w,y).$ Since the projections onto the $(x, z)$ components of the vector  fields $ F(-w,-y,x,z)$ and $F(w, y, x, z)$ are equal, the map $h$ can be constructed to be even,
 $h(-w, -y)=h(w, y).$
 In particular the Taylor expansion of $h$ has no odd-power terms.

 For a scalar function $\phi=\phi(w, y)$ denote by $D_w \phi $ the $1\times n$ row vector of the partial derivatives with $w_1, \dots w_{n},$ and denote by $D_y \phi $ the row vector of the derivatives with $y_1, \dots y_{n}.$

We seek to produce an approximation
$\displaystyle \left[ \begin{array}{l} X\\
Z \end {array}\right ] $ of $h$ such as
$\displaystyle \left[ \begin{array}{l}  X
\\Z \end {array}\right ] -h= \mco _4.$ Given that $h$ has no third degree terms in its expansion, we seek quadratic polynomials such that
$\displaystyle \left[ \begin{array}{l}  X
\\Z \end {array}\right ] -h= \mco _4. $  We separate the terms of (\ref{State0}) that are smaller  than the needed precision. A priory  $x_k=\mco_2$ and $z_k=\mco_2.$ Their gradients are $\mco_1.$ We get
\beq
\begin{array}{l}
(-2z_k-z_k^2-w_k^2)=(1-u_k^2-w_k^2)= -2z_k-w_k^2 +\mco_4=\mco_2 \\
\\
\frac{1}{|V|}=\frac{1}{1+ \frac{1}{n}\sum\limits_{k=1}^n z_n} =1+\mco_2  \\
\\
m = \frac{1}{|V|}\frac{1}{n}\sum\limits_{k=1}^n (1-u_k^2-w_k^2)w_k=
\frac{1}{1+\mco_2}\frac{1}{n}\sum\limits_{k=1}^n \mco_2 \mco_1=
\mco_3 \\
\\
(-2z_k-z_k^2-w_k^2)(1+z_k)=(-2z_k-z_k^2-w_k^2)+\mco_4
\end{array}
\label{miso3}
\eeq
We conclude that
\beq
\label{State3}
\begin{array}{l}
\dot w_k=-y_k-2z_k w_k -w_k^3-m+\mco_4=   -y_k +\mco_3\\
\dot y_k=w_k- mx_k= w_k+ m\mco_2\\
\\
\dot x_k= z_k-\frac{1}{n}\sum \limits_{l=1}^n z_l +\mco_4\\
\dot z_k= -2z_k-x_k-w_k^2+\mco_4\\
\dot z_n= -2z_n+\sum\limits_{l=1}^{n-1}x_l-\left ( \sum\limits_{k=1}^{n-1}w_l \right )^2 +\mco_4
\end{array}
\eeq

Following the approximation technique from  \cite{Carr} Theorem 3, page 5,
based on the vector field (\ref{State3}), we need quadratic polynomials $X_k$ with $k=1..n-1$ and $Z_k$ with $k=1..n$ such that the differences $\delta^X , \delta^Z $ given below are $\mco_4.$
\bdima
\begin{array}{l}
\delta^X_k =[ D_w X_k\; D_y X_k]
\left [\begin{array}{r} -y +\mco_3\\
w+\mco_3
\end{array} \right ] - \left ( Z_k-\frac{1}{n}\sum \limits_{l=1}^n Z_l +\mco_4 \right )\\
\delta^Z_k =[ D_w Z_k\; D_y Z_k]\left [\begin{array}{r} -y +\mco_3\\
w+\mco_3
\end{array} \right ] - \left ( -2Z_k-X_k-w_k^2+\mco_4 \right ) \\
\delta^Z_n =[ D_w Z_n\; D_y Z_n]
\left [\begin{array}{r} -y +\mco_3\\
w+\mco_3
\end{array} \right ] - \left (-2Z_n+ \sum\limits_{l=1}^{n-1} X_l -(\sum \limits_{k=1}^{n-1}w_l)^2+\mco_4 \right )
\end{array}
\edima
For a more compact notation, denote by $L $ the linear differential operator
\newline $ Lf= -( D_w f)y+ (D_y f)w.$ Moving the error terms to the left, we get

\beq
\label{quadratic}
\begin{array}{l}
\delta^X_k +\mco_4 =L X_k -\left ( Z_k-\frac{1}{n}\sum \limits_{l=1}^n Z_l \right )\\
\delta^Z_k +\mco_4=L Z_k - \left ( -2Z_k-X_k-w_k^2 \right )\\
\delta^Z_n +\mco_4=L  Z_n- \left (-2Z_n+ \sum\limits_{l=1}^{n-1} X_l -(\sum \limits_{k=1}^{n-1}w_l)^2\right )
\end{array}
\eeq
We meet the $\mco_4$ requirement for the differences $\delta^X , \delta^Z $  if the right hand side of (\ref{quadratic}) is identically zero.
It is more expedient to use $ \overline Z $ defined as
$$ \overline Z= \frac{1}{n} \sum _{l=1}^n Z_l$$
rather than $Z_n$. To achieve $\mco_4$ precision we need that for $k=1..n-1$ the following hold:
\bdima
\begin{array}{l}
L X_k -( Z_k-\overline Z)=0\\
L Z_k +2Z_k+X_k=-w_k^2 \\
L \overline Z+2 \overline Z=-\frac{1}{n}\sum \limits_{k=1}^n w_k^2
\end{array}
\edima
Note that by subtracting the last equation from the middle row equations we get
\beq
\label{LZ}
\begin{array}{l}
L X_k -( Z_k-\overline Z)=0\\
L (Z_k-\overline Z) +2(Z_k-\overline Z) +X_k=-(w_k^2 -\frac{1}{n}\sum \limits_{k=1}^n w_k^2)\\
L \overline Z+2\overline Z=-\frac{1}{n}\sum \limits_{k=1}^n w_k^2
\end{array}
\eeq
Before proceeding to finding the quadratic polynomials $X_k, Z_k, \bar Z,$ we perform preliminary calculations to determine how $L$ acts on simple quadratic terms.
We compute $L w_k^2=-2w_k y_k, \; \;  L w_ky_k= w_k^2-y_k^2, \; \; L y_k^2=2w_ky_k.$ From these and
\beq
\label{sigmas}
\sww= \frac{1}{n}\sum_{k=1}^n w_k^2, \; \swy= \frac{1}{n}\sum_{k=1}^n w_k y_k,, \; \;
\syy= \frac{1}{n}\sum_{k=1}^n y_k^2.
\eeq
we get
\beq
\label{operatorL}
\begin{array}{cllll}
L\sww &= -2\swy   & \mbox{ and }&  L (w_k^2-\sww)&= -2(w_k y_k-\swy)  \\
L \swy &= \sww-\syy & &     L (w_k y_k -\swy)&=(w_k^2-\sww) -(y_k^2-\syy) \\
L \syy &= 2\swy & & L(y_k^2- \syy)&= 2(w_k y_k -\swy)\\
\end{array}
\eeq
Using vector notation we rewrite (\ref{operatorL}) as
\bdima
L \tayvec = \left [ \begin{array}{ccc} 0&-2&0\\
1&0&-1\\
0&2&0
\end{array} \right ] \tayvec
\edima
\beq
\label{matrixL}
L \dtayvec = \left [ \begin{array}{ccc} 0&-2&0\\
1&0&-1\\
0&2&0
\end{array} \right ] \dtayvec.
\eeq
Let $M_L$ denote the matrix of the operator, $\displaystyle M_L= \left [ \begin{array}{ccc} 0&-2&0\\
1&0&-1\\
0&2&0
\end{array} \right ] . $

\vskip5mm
\noindent
Motivated by the fact that the original system (\ref{Main}) had index-permutation invariance, and from the identities  $\sum \limits_{l=1}^n x_l=0$
and $\sum \limits_{l=1}^n ( z_l - \overline z) =0, $
we look for quadratic polynomials $X_k, (Z_k-\overline Z), \overline Z$ with similar features.
We want to find constants $c_1, \dots c_9$ such that for $k=1..n-1$

\bdima
\begin{array}{cl}
X_k& =c_1 (w_k^2-\sww)+c_2 (w_k y_k - \swy) +c_3 (y_k^2- \syy)\\
&\\
Z_k-\overline Z& = c_4 (w_k^2-\sww)+c_5 (w_k y_k-\swy)  +c_6 (y_k^2-\syy)\\
&\\
\overline Z&=c_7\sww+c_8 \swy +c_9\syy
\end{array}
\edima
Group the unknown  coefficients into the \emph{row}  vectors $c_X=[c_1, c_2, c_3]$,  $c_Z=[c_4, c_5, c_6]$ and
$\overline c= [c_7, c_8, c_9].$ We get that
\bdima
X_k= c_X\dtayvec, \; \; Z_k-\overline Z = c_Z \dtayvec, \; \; \overline Z=\overline c \tayvec.
\edima
From (\ref{matrixL}) we get
\bdima
L\;  X_k= c_X M_L \dtayvec, \; \; L( Z_k-\overline Z) = c_Z M_L \dtayvec, \mbox{ and }
\edima
\bdima
L\overline Z=\overline c M_L \tayvec.
\edima
Use  matrix notation to rewrite (\ref{LZ})  as:
\bdima
 ( c_X \; M_L- c_Z ) \dtayvec =0,
 \edima
 \bdima
  ( c_Z  M_L+2c_Z+ c_X)\dtayvec= \left [
\begin{array}{lll}
-1& 0 & 0
\end{array}
\right ]\dtayvec
\edima
and
\bdima
(\overline c \; M_L+2\overline c)\tayvec =\left [ \begin{array}{lll}
-1& 0 & 0
\end{array}
\right ]\tayvec
\edima
Use identification of coefficients in the latter equality: we require  $\displaystyle \overline c \; (M_L+2I_3)= [-1\; 0\; 0].$
The solution is $ \overline c =[ -\frac38, -\frac14, -\frac18. ]$ We conclude
$$\overline Z =-\frac38 \sww- \frac28\swy - \frac18 \syy.$$
By identification of coefficients we require that the 6-dimensional \emph{row} vector $[c_X, c_Z]$ solves
\bdima
[c_X \; c_Z ]
\left[
\begin{array}{l|c }
M_L& I_3\\
\hline\\
-I_3& M_L+2I_3
\end{array}
\right ]=[0\; 0\; 0\;\;  -1 \; 0 \; 0]
\edima
We get that $\displaystyle c_X= [-\frac{11}{25}, \; -\frac{4}{25}, \; -\frac{14}{25}]$ and
$\displaystyle c_z= [-\frac{4}{25}, \; -\frac{6}{25}, \; \frac{4}{25} ].$

\noindent
Recall that $Z_k=c_Z\dtayvec + \overline c \tayvec.$ We get that for $k=1..n$
\bdima
Z_k=-\frac{4}{25}w_k^2-\frac{6}{25}w_k y_k+ \frac{4}{25}y_k^2+
 \left ( -\frac{43}{200} \sww-\frac{1}{100} \swy -\frac{57}{200}\syy \right ).
\edima
Substitute $\displaystyle h=col[X, Z]+\mco_4$ into the differential equations (\ref{State3}). Up to $\mco_4,$
\bdima
-2z_k -w_k^2
\simeq-\left(\frac{17}{25}w_k^2-\frac{12}{25}w_k y_k+ \frac{8}{25}y_k^2 \right ) +
 \left ( \frac{43}{100} \sww+\frac{2}{100} \swy +\frac{57}{100}\syy \right ),
\edima
thus $-2z_k -w_k^2= s_k+\mco_4.$ To complete the approximation of $\dot w_k$ note that the exact equation  $\dot w_k = (-2z_k-z_k^2-w_k^2)w_k-y_k-m(1+z_k)  $  differs from $(-2z_k -w_k^2)w_k -y_k -m $ by
$z_k^2w_k-mz_k=w_k \mco_4+m \mco _2.$ The term $(-2z_k -w_k^2)w_k $ differs from  $s_k w_k $ by $\mco _4 w_k.$ Overall,
\bdima
\dot w_k =-y_k-m +w_k s_k+m\mco_2 +w_k \mco_4
\edima
and $\dot y_k= w_k+m\mco_2 $ per (\ref{State3}).
Substitute  $-2z_k -w_k^2= s_k+\mco_4, \; z_k^2 =\mco_4 $ and $\bar z=\mco_2$   in
$\displaystyle m= \frac{1}{1+\overline z} \frac{1}{n} \sum_{k=1}^n (-2z_k-z_k^2-w_k^2)w_k $ from (\ref{mdefine}) to  complete
(\ref{ODEwy}).

\end{proof}

\begin{remark}
On the central manifold, the mean speed $|V|$ is always under one, and
$$1-|V| \geq  C \frac{1}{n} \sum_{l=1}^n (w_l^2+y_l^2) + \mco_4 .$$
\end{remark}
Proof: Use  $\displaystyle 1-|V|=-\overline Z + \mco_4= \frac{1}{n} \sum_{l=1}^n  (\frac38 w_l^2+ \frac28 w_l y_l + \frac18 y_l^2 )$ and the fact that the quadratic form $ \displaystyle \frac38 a^2+ \frac28 ab + \frac18 b^2 $ is positive definite with minimum equal to $C=\frac{2-\sqrt 2}{8} (a^2+b^2). $

\subsection{Amplitude Variation on The Central Manifold}\label{AV}

 We consider trajectories $(w, y)$  in the subset $H\times H$ of the central manifold, for as long as they remain in the small ball $B_\delta$ where the  central manifold is  $C^5$. We use the polar coordinate representation  from (\ref{deftheta}) and Remark \ref{reftheta}.

Let $A, E$ be periodic functions of period $\pi$ defined as
\beq
\label{AE}
\begin{array}{l}
 A(\t)=-\cos ^2 \t \left [ \frac{17}{25}\cos^2 \t -\frac{12}{25}\sin\t \cos \t+ \frac{8}{25}\sin ^2 \t \right ] \; \; \; \mbox{ and }\\
 \\
E(\t)=  \frac{43}{100} \cos ^2 \t +\frac{2}{100} \sin \t \cos \t  +\frac{57}{100}\sin^2 \t.
\end{array}
\eeq

We get from the choice of $\t_k$ in $ (w_k, y_k) = ( a_k \cos \t_k , a_k \sin \t_k)$ that the functions $A(\t _k(t) )$  and $E(\t_k(t) )$ are continuous for all $t$, and piecewise $C^5.$ Moreover, non-differentiability of $A(\t _k)$ and $E(\t_k)$ can only happen for times $t_{oo}$ and indexes  $k_{oo}$ with
$w_{k_{oo}}(t_{oo})= y_{k_{oo}}(t_{oo})=0$. We abbreviate this excepted set of times  (this restriction) by $\mathcal E{oo}.$

\begin{proposition}
Using polar coordinates, the differential equations for $w_k, y_k $ become:
\beq
\begin{array}{l}
\dot a_k = -m\cos \t_k +a_k^3 A(\t_k) + a_k \cos^2 \t_k \frac{1}{n} \sum \limits_{l=1}^n a_l^2 E(\t_l)+\mco_4\\
 \\
\dot{\theta}_ k = 1+\frac{m}{a_k}\mco_0 + \mco_2, \; \; \; \; \mbox{ except for } \; \mathcal E{oo}.
\end{array}
\label{polar}
\eeq
The condition $\displaystyle \sum \limits _{k=1}^n w_k=0$ rewrites as $ \displaystyle \sum \limits _{k=1}^n a_k \cos \t_k=0.$
\end{proposition}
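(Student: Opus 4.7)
The plan is to derive the polar form by straightforward substitution of $w_k = a_k\cos\theta_k$, $y_k = a_k\sin\theta_k$ into the center-manifold system (\ref{ODEwy}), using the standard identities
\[
a_k \dot a_k = w_k \dot w_k + y_k \dot y_k, \qquad a_k^2 \dot \theta_k = w_k \dot y_k - y_k \dot w_k,
\]
which are valid whenever $(w_k, y_k) \neq (0,0)$. The exceptional set $\mathcal E{oo}$ is precisely where $a_k$ vanishes, so on its complement Proposition \ref{PieceAngle} guarantees $\theta_k$ is differentiable and division by $a_k$ is legal.

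For $\dot a_k$, plugging (\ref{ODEwy}) into $a_k \dot a_k = w_k \dot w_k + y_k \dot y_k$ gives immediate cancellation of the $\pm w_k y_k$ pair, leaving $a_k \dot a_k = -m\, w_k + w_k^2 s_k + w_k^2 \mco_4 + (w_k + y_k)\, m\, \mco_2$. Dividing by $a_k$, the first two terms become $-m \cos\theta_k$ and $a_k \cos^2\theta_k\, s_k$. The error pieces are safe: $w_k^2 \mco_4 / a_k = a_k \cos^2\theta_k\, \mco_4 = \mco_5 \subset \mco_4$, and the $m$-weighted errors, being already $m\cdot\mco_3$, fit inside the same remainder since $m = \mco_3$. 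To convert $a_k\cos^2\theta_k\, s_k$ into the advertised form, expand $s_k$ from (\ref{ODEwy}) with $w_k = a_k\cos\theta_k$, $y_k = a_k\sin\theta_k$ and similarly rewrite $\sigma_{ww}, \sigma_{wy}, \sigma_{yy}$ as $\frac{1}{n}\sum a_l^2 \cos^2\theta_l$, etc.; matching trigonometric polynomials against the definitions (\ref{AE}) identifies the first cubic piece as $a_k^3 A(\theta_k)$ and the mean-field piece as $a_k\cos^2\theta_k \cdot \tfrac{1}{n}\sum_l a_l^2 E(\theta_l)$.

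For $\dot\theta_k$, the combination $w_k \dot y_k - y_k \dot w_k$ produces
\[
(w_k^2 + y_k^2) + m\, y_k - y_k w_k s_k + w_k^2\, m\, \mco_2 - w_k y_k\, \mco_4,
\]
so dividing by $a_k^2$ the leading term yields the $1$, the $m y_k / a_k^2$ term yields $(m/a_k)\sin\theta_k = (m/a_k) \mco_0$, and $-\sin\theta_k\cos\theta_k\, s_k$ is an $\mco_2$ contribution absorbed into the advertised $\mco_2$; the remaining quotients $(w_k^2 \mco_4)/a_k^2 = \cos^2\theta_k\, \mco_4$ and analogous terms are $\mco_4 \subset \mco_2$. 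Finally, the invariance of $H$ (equivalently $\sum_l w_l = 0$) translates verbatim to $\sum_l a_l \cos\theta_l = 0$.

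The main obstacle is bookkeeping of the $\mco_q$ remainders after division by $a_k$ or $a_k^2$: several error terms look singular, but each inherits an intrinsic factor of $w_k$ or $w_k^2$ from (\ref{ODEwy}) that cancels the denominator and restores a clean bound in $a_M$. The single term that genuinely resists this cancellation is $m\, y_k / a_k^2$ in the angular equation, because the leading $-m$ in $\dot w_k$ is \emph{not} weighted by $w_k$; this is exactly why $\dot\theta_k$ carries the $(m/a_k)\mco_0$ contribution rather than being purely $\mco_2$, and it is the source of the phrase ``except for $\mathcal E{oo}$''.
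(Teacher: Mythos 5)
Your proof is correct and follows essentially the same route as the paper: the identities $a_k\dot a_k = w_k\dot w_k + y_k\dot y_k$ and $a_k^2\dot\theta_k = w_k\dot y_k - y_k\dot w_k$ applied to (\ref{ODEwy}), with $w_k^2 s_k$ identified as $a_k^4A(\t_k)+a_k^2\cos^2\t_k\,\frac1n\sum_l a_l^2E(\t_l)$ and the unweighted $-m$ in $\dot w_k$ producing the $\frac{m}{a_k}\mco_0$ term, exactly as in the paper's argument. (The stray square in your ``$w_k^2\,m\,\mco_2$'' term of the angular computation is a harmless transcription slip: the term is $w_k m\mco_2$, and after division by $a_k^2$ it lands in the $\frac{m}{a_k}\mco_0$ bucket rather than in $\mco_2$, leaving the stated form unchanged.)
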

\begin{proof}

Use
$a_k \dot a_k = w_k \dot w_k + y_k \dot y_k= -mw_k + w_k^2 s_k + w_k \mco_4$ per (\ref{ODEwy}). Note that $\frac{w_k}{a_k} \mco_4$ is  $ \mco_4. $
The periodic functions $A, E$ allow us to rewrite $w_k^2 s_k$ as
\bdima
w_k^2 s_k= a_k^4 A(\t_k) + a_k^2\cos^2 \t_k \frac{1}{n} \sum _{l=1}^n a_l^2 E(\t_l),
\edima
and to conclude $\dot a_k = -m\cos \t_k +a_k^3 A(\t_k) + a_k \cos^2 \t_k \frac{1}{n} \sum _{l=1}^n a_l^2 E(\t_l) + \mco_4.$
Approximate $\dot{\t_k}$  from
$ \displaystyle \dot w_k = (-y_k +s_kw_k -m) + w_k \mco_4+m \mco _2$ and $\dot y_k = w_k+m\mco_2$:
\bdima
\dot{\theta}_ k= \frac{\dot y_k  w_k -\dot w_k y_k}{a_k^2} = 1+\frac{1}{a_k^2}(mw_k \mco_2-s_kw_ky_k+my_k- w_ky_k\mco_4-my_k\mco_2)
\edima
\bdima
=1+ \frac{m}{a_k}( \cos \t_k  \mco _2+ \sin \t_k + \sin \t_k \mco_2) -s_k \sin \t_k\cos \t_k  +\sin\t_k\cos\t_k \mco_4.
\edima
The factor following  $\frac{m}{a_k}$ has bounded terms, thus is $\mco_0$, and  $ \dot{\theta}_ k= 1+\frac{m}{a_k}\mco_0 + \mco_2.$

\end{proof}

Per (\ref{polar}), the amplitude $a_k$ is forced down by the negative term $a_k^3 A(\t_k)$ and amplified by the positive coupling terms $\cos^2 (\t_k) E(\t_l).$ To understand which of the two has the stronger effect, we examine their average impact over a cycle. Note that $A \leq 0$  and has average $\mu(A)=\frac{1}{\pi}\int_0^\pi A(\t) d\t= -\frac{59}{200}=-0.295. $  $E$ is positive and has average  $\mu(E)=0.5.$
 Estimating the impact of agent $l$ on the amplitude $a_k$ requires we investigate the products between  $\cos^2 (\t ) $ and all possible shifts $E(\t + \tau), $  as shown in Figure \ref{FigPeriodicA}. Considering all potential translations of functions, a.k.a. their hulls, is a common practice in dynamical systems  involving almost periodic functions that we would like to avoid. Subsection \ref{Subsection_TimeDependent} provides an alternate approach.

\begin{figure}[h!]
\centering
\includegraphics[width=.8\textwidth]{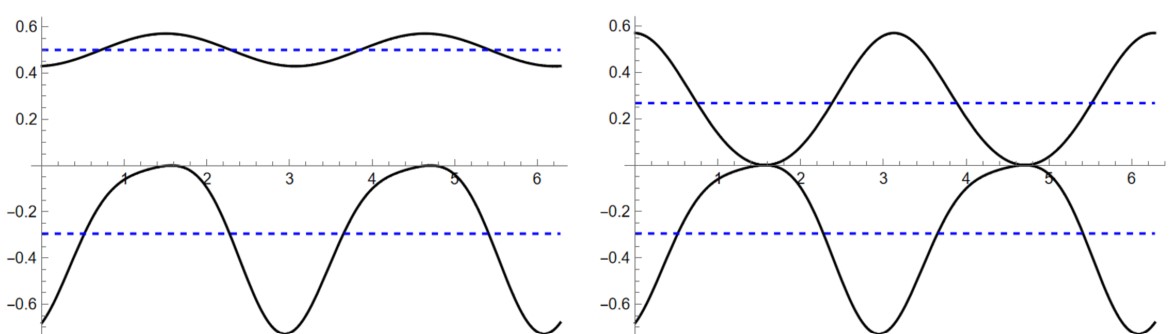}
\caption{
Left: A plot of the periodic functions $A\leq 0 $ and  $E> 0$, with their respective means, namely $\mu(A)=-0.295 $ and  $\mu(E)=0.5.$  Right: the functions $\displaystyle y= \cos^2 (\t) E(\t+\pi/2) $  of mean $0.2675$ 
and  $y=A(\t)$ of mean $-0.295 .$}
\label{FigPeriodicA}
\end{figure}

\subsection{The Stability of the Zero Solution for Non-autonomous Systems with Cubic Nonlinearities.}\label{Subsection_TimeDependent}

This subsection places the proof of stability for (\ref{polar}) from Section \ref{Section_Stability}
in a broader context. However,  the proof itself is independent of the content of this subsections, except for the calculations for the mean values of functions (\ref{means}),  Remark \ref{quotientrule} and Proposition \ref{FancyIneq}.

 We consider the systems (\ref{AlmostP1}) where the unknown $x=(x_1, \dots x_n)^T$ is vector-valued from $\R$ to $\R^n$, the coefficient functions $ b_k, c_k , d_k: \R \to \R$ are continuous and the remainder functions $R_k$ defined on $\R^n\times \R $ satisfy the
 $\mbox{\it{o}}_{3}$
 condition:
\bdima
\dot x_k = b_k(t)x_k^3  + c_k(t) x_k  \frac{1}{n} \sum \limits_{l=1}^n d_l(t) x_l^2+ R_{k}(x, t), \; \mbox{ for }  k=1 \dots n.
\edima

As explained in the introductory section, such systems are related to the dynamics of autonomous systems near fixed points whose Jacobian have purely imaginary roots
 $\omega = (\pm i \omega_1, \dots , \pm i \omega _n).$
  In most practical settings the coefficient functions are combinations of trigonometric functions of periods $2\pi/\omega_k, $ so they are almost periodic.\footnote{ If $\mathcal T$ denotes the set of all real-valued trigonometric polynomials (all linear  combination of $\cos( \lambda_k t)$ and $ \sin (\lambda _k t)$, with real $\lambda_1, \lambda _2, \dots $ ),  then a real-valued function $f$ is called almost periodic if there exists a sequence of trigonometric polynomials  $ T_k \in \mathcal T$ such that $T_k$ converges to $f$ in the sup norm on $\R.$
 Let $\mathcal A _{\R} $ denote the set of real-valued almost periodic functions. $\mathcal A_{\R}$ is closed under function multiplication and it is a closed linear subspace of the space $C_b(\R)$ of continuous and bounded functions
; in fact $\mathcal A_{\R}$ is the smallest subalgebra of $C_b(\R)$  containing all the continuous periodic functions (\cite{Corduneanu}). }

\begin{notation}
Define the mean  $\mu(f)$ of a function $f:[0, \infty) \to \R$ to be
\beq
  \mu(f)=\lim_{P\to \infty} \frac{1}{P}\int _{0}^P f(t) dt, \; \mbox{ whenever the limit exists.}
  \label{defMean}
  \eeq
Let  $\mathcal{BIM} $ denote the subspace of bounded, continuous functions that have a mean, and have bounded off-mean antiderivative:
 \beq
 \mathcal{BIM}= \{ f\in C_b([0, \infty))\;  | f \; \mbox{has mean }, \int  \left(  f- \mu(f) \right ) dt \in C_b([0, \infty))\}.
 \label{BIMspace}
\eeq

\end{notation}

If $f$ is a continuous, periodic function with period $T$ then $\mu(f)$ is the customary average value: $\mu(f)=\frac{1}{T}\int _0^T f(t) dt.$ Moreover, the antiderivative $\int (f(t)-\mu(f)) dt $ is periodic, thus  $f \in \mathcal{BIM}.$ Any linear combination of periodic functions belongs to $ \mathcal{BIM}.$ \footnote{Almost periodic functions have a mean. There exist zero-mean almost periodic functions with unbounded antiderivatives, so
 the set of almost periodic functions is not a subset of $ \mathcal{BIM}. $}

We can now state the stability result for (\ref{AlmostP1}); in keeping with earlier conventions, for a solution vector  $x=(x_1, \dots x_n)^T$ use $x_M=x_M(t)$ to denote the largest absolute value among its components:  $x_M(t)=\max_k |x_k(t)|.$

\begin{proposition}
\label{StableAlmost}
Consider the system (\ref{AlmostP1}), with continuous coefficient functions $b_k, c_k, d_k$ and 
reminders $R_k= \mbox{\it{o}}_{3}.$ Additionally, assume that $b_k$ and $(c_k+d_k)^2$ are in $\mathcal{BIM}.$

If $\mu(b_k)+ \frac{1}{n} \sum \limits_{l=1}^{n} \mu\left ( \frac14 (c_l+d_l)^2 \right ) < 0$ for all $k$ then the $x=\zn$ solution is asymptotically stable, with $x_M(t) \leq C x_M(0)/\sqrt t$ for large $t.$

\end{proposition}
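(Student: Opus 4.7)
The plan is to build a Lyapunov function $V$ comparable to $\sum_k x_k^2$ such that $\dot V \leq -\alpha V^2 + o(V^2)$ holds once $V$ is small. Integrating $d(1/V)/dt \geq \alpha - o(1)$ then forces $V(t) = O(1/t)$, and since $V \asymp \sum_k x_k^2 \geq x_M^2$, this will yield $x_M(t) \leq C/\sqrt t$. Asymptotic stability will follow by combining forward invariance of a small ball with this decay rate.

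The first step is to start from $V_0 = \sum_k x_k^2$ and compute
$$\dot V_0 = 2\sum_k b_k x_k^4 + \frac{2}{n}\Bigl(\sum_k c_k x_k^2\Bigr)\Bigl(\sum_l d_l x_l^2\Bigr) + 2\sum_k x_k R_k,$$
then to control the cross term with the unconditional inequality $AB \leq (A+B)^2/4$ followed by Cauchy--Schwarz:
$$\frac{2}{n}\sum_{k,l} c_k d_l x_k^2 x_l^2 \leq \frac{1}{2n}\Bigl(\sum_k (c_k+d_k) x_k^2\Bigr)^2 \leq \frac{1}{2n}\sum_l (c_l+d_l)^2 \cdot \sum_k x_k^4.$$
This bounds $\dot V_0 \leq \sum_k \beta_k(t) x_k^4 + 2\sum_k x_k R_k$, where $\beta_k(t) = 2 b_k(t) + \frac{1}{2n}\sum_l (c_l+d_l)^2(t)$ lies in $\mathcal{BIM}$ (as a linear combination of $\mathcal{BIM}$ functions) with mean $\bar\beta_k := \mu(\beta_k) = 2\bigl[\mu(b_k) + \frac{1}{n}\sum_l \mu\bigl(\tfrac14(c_l+d_l)^2\bigr)\bigr] < 0$ by hypothesis.

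The second step absorbs the oscillatory part of $\beta_k$ into the Lyapunov function. Pick bounded functions $\psi_k(t)$ satisfying $\psi_k'(t) = -(\beta_k(t) - \bar\beta_k)$, which exist because $\beta_k \in \mathcal{BIM}$, and define
$$V(t,x) = \sum_k x_k^2 + \sum_k \psi_k(t)\, x_k^4.$$
Since $\psi_k$ is uniformly bounded, $V \asymp V_0$ for $|x|$ small. Differentiating along the flow, $\sum_k \psi_k' x_k^4$ exactly cancels the oscillating part of $\sum_k \beta_k x_k^4$, so
$$\dot V \leq \sum_k \bar\beta_k x_k^4 + 4\sum_k \psi_k x_k^3 \dot x_k + 2\sum_k x_k R_k \leq \frac{\bar\beta}{n} V_0^2 + o(V_0^2),$$
with $\bar\beta = \max_k \bar\beta_k < 0$, using $\sum_k x_k^4 \geq V_0^2/n$ (Cauchy--Schwarz); the extra term $4\sum\psi_k x_k^3\dot x_k$ is $O(|x|^6)$ and the remainders $|x_k R_k|$ are $o(|x|^4)$ by the $\mbox{\textit{o}}_3$ condition. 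Shrinking the neighborhood so $V$ is small yields $\dot V \leq -\tfrac{|\bar\beta|}{2n} V^2$, and integration gives $V(t) \leq 2n/(|\bar\beta|\, t)$.

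The hard part will be making these estimates hold uniformly in $t$ on $[0,\infty)$. The $O(|x|^6)$ term requires uniform bounds on $\psi_k, b_k, c_k, d_k$, which follow from the $\mathcal{BIM}$ and $C_b$ hypotheses, and the $\mbox{\textit{o}}_3$ condition on $R_k$ must be used carefully to guarantee $|x_k R_k|/V^2 \to 0$ uniformly in $t$ as $|x| \to 0$. Once these uniform bounds are secured, the differential inequality forces $V$ to decrease while it is small, so a standard forward-invariance argument keeps the trajectory in the regime where the estimates close, completing both asymptotic stability and the $1/\sqrt t$ decay rate.
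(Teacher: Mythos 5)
Your proof is correct and takes essentially the same route as the paper: your additive Lyapunov function $\sum_k x_k^2+\sum_k \psi_k(t)x_k^4$, with $\psi_k$ a bounded antiderivative of the zero-mean part supplied by the $\mathcal{BIM}$ hypothesis, is just the expanded form of the paper's quotient $W_k=x_k^2/\bigl(2x_k^2(B_k+\tfrac1n\sum_l C_l)+1\bigr)$, and your cross-term bound via $AB\le \bigl(\tfrac{A+B}{2}\bigr)^2$ followed by Cauchy--Schwarz is exactly the paper's Proposition \ref{FancyIneq}, applied before rather than after introducing the correction. Both arguments then close with the same Riccati-type inequality $\dot W\le -KW^2$, yielding asymptotic stability and the $1/\sqrt t$ decay.
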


\begin{proof}
Within this proof, use $\mathcal O_q$ for $\mathcal O \left( \; (\max_k |x_k|)^q \; \right ).$ Work in a small ball $B_\delta$ centered at the origin in $\R^n$.

The functions $b_k$ are in $\mathcal{BIM},$ so the antiderivatives $\int _0^t (b_k(\tau)-\mu(b_k))d\tau $ are bounded, (\ref{BIMspace}); by adding large enough constants, we obtain antiderivatives that are {\em positive} and bounded.
Let $B_k$ denote an antiderivative of $b_k-\mu (b_k)$ that has positive (and bounded) range.
Let $\mu_k= \frac14 \mu\left( (c_k(t)+d_k(t))^2 \right )  $ and let $C_k$ denote an antiderivative of $\frac14 (c_k+d_k)^2-\mu_k$ with positive (and bounded) range.

 Define the Lyapunov functions $W_k$ and $W$ as
\beq
\label{Wfunction}
W_k(t)= \frac{x_k^2(t)}{2x_k^2(t)\left (B_k(t))+ \frac{1}{n} \sum \limits_{l=1}^{n} C_l(t) \right )+1}, \; \mbox{ and } \;  W= \frac{1}{n} \sum \limits_{k=1}^{n} W_k.
\eeq

For as long as $x(t)$ is in the ball $B_\delta$ , the denominators of $W_k$  are  close to $1,$  so $W_k$ is approximately equal to $x_k^2.$ In particular $W$ and $x_M^2$ are only within a factor of $n$ from each other.

Before differentiating $W_k$ we take note of the following:
\begin{remark}
\label{quotientrule}
For $f, g$  positive functions, with $g, \dot g$ bounded, and  $f$ small,
\bdima
\frac{d}{dt} \frac{f}{2fg+1}= \frac{\dot f-2f^2\; \dot g}{(2fg+1)^2}= (\dot f-2f^2\; \dot g)(1 + \mco(|f|)).
\edima
\end{remark}

\noindent
To estimate $\dot W_k$ apply the latter with $f=x_k^2$ and $g=B_k + \frac{1}{n} \sum \limits_{l=1}^{n} C_l. $ We get
\bdima
\frac12 \dot W_k= \left [ x_k \dot x_k -  x_k ^4 \left (  \dot B_k +
\frac{1}{n} \sum \limits_{l=1}^{n}\dot C_l
 \right )   \right ] (1+\mco_2 )=
\edima
\bdima
b_k x_k^4  + c_k x_k^2 \left [\frac{1}{n} \sum \limits_{l=1}^n d_l x_l^2 \right ]+ x_k^4 \left ( -b_k+\mu(b_k)- \frac{1}{n} \sum \limits_{l=1}^{n} ( \frac14 (c_l+d_l)^2 -\mu_l ) \right )+ \mco_4\mco_2
\edima
\beq
\label{almostWkdot}
=x_k^4\left ( \mu(b_k)+\frac{1}{n} \sum \limits_{l=1}^{n}  \mu_l \right ) + c_k x_k^2 \left [\frac{1}{n} \sum \limits_{l=1}^n d_l x_l^2 \right ] - x_k^4\left [ \frac{1}{n} \sum \limits_{l=1}^{n}  \frac14 (c_l+d_l)^2 \right ] +\mco_6
\eeq
since the terms $\dot x_k x_k$  are $\mco_4$ and $\dot B_k, \dot  C_l$ are bounded.

Use the constant $K'$  for  $\max _k\{ \mu(b_k)+\frac{1}{n} \sum \limits_{l=1}^{n}  \mu_l  \}=-K'.$ By our assumptions $K'> 0$ and $\displaystyle x_k^4\left ( \mu(b_k)+\frac{1}{n} \sum \limits_{l=1}^{n}  \mu_l \right ) \leq -K' x_k^4.$ Substituting into (\ref{almostWkdot}), and averaging over $k$ in $1\dots n$ gives:

\bdima
\frac12 \dot W \leq -K' \left [ \frac{1}{n} \sum \limits_{k=1}^{n} x_k^4 \right ]+\mco_6+
\edima
\bdima
 + \left [ \frac{1}{n} \sum \limits_{k=1}^{n} c_k x_k^2 \right ]\left [\frac{1}{n} \sum \limits_{l=1}^n d_l x_l^2 \right ]- \left [ \frac{1}{n} \sum \limits_{k=1}^{n} x_k^4 \right ]\left [ \frac{1}{n} \sum \limits_{l=1}^{n}  ( \frac12 c_l+ \frac12 d_l)^2 \right ].
\edima
Because
$W^2$ and $\displaystyle \frac{1}{n} \sum \limits_{k=1}^{n} x_k^4 $ are comparable, and larger than $\mco_6, $ there exists $K>0$ such that
\beq
 \dot W \leq -K W^2  + 2\left [ \frac{1}{n} \sum \limits_{k=1}^{n} c_k x_k^2 \right ]\left [\frac{1}{n} \sum \limits_{l=1}^n d_l x_l^2 \right ]- 2\left [ \frac{1}{n} \sum \limits_{k=1}^{n} x_k^4 \right ]\left [ \frac{1}{n} \sum \limits_{l=1}^{n}  ( \frac12 c_l+ \frac12 d_l)^2 \right ].
\label{almostWdot}
\eeq

We will need the result of the following Proposition

\begin{proposition}
\label{FancyIneq}
Let $d$ be a positive integer, and let $p_k, q_k, r_k, \; k=1\dots d $ be scalars in $[0, \infty ).$  Then
\bdima
\left  [ \sum \limits_{k=1}^d p_k q_k \right ]\left  [ \sum \limits_{k=1}^d p_k r_k \right ] \leq
\left  [ \sum \limits_{k=1}^d p_k^2 \right ]\left  [ \sum \limits_{k=1}^d \left ( \frac12  q_k + \frac12 r_k \right ) ^2\right ].
\edima
\end{proposition}

\begin{proof}
Let $\displaystyle a=  \sum \limits_{k=1}^d p_k q_k $ and let $\displaystyle b= \sum \limits_{k=1}^d p_k r_k .$
Use the inequality $\displaystyle  ab\leq (\frac12 a+\frac12 b) ^2.$
From $ \displaystyle \frac12 a+\frac12 b=  \sum \limits_{k=1}^d p_k( \frac12 q_k+ \frac12 r_k). $
we get
\beq
\label{ineq1}
\left  [ \sum \limits_{k=1}^d p_k q_k \right ]\left  [ \sum \limits_{k=1}^d p_k r_k \right ] \leq
\left [ \sum \limits_{k=1}^d p_k( \frac12 q_k+ \frac12 r_k) \right ]^2.
\eeq
For the vectors $v$ and $v'$ in $\R^d$ with components $v_k= p_k$ and $v'_k= \frac12 q_k+ \frac12 r_k$
use Cauchy Schwartz inequality to bound their dot product,  $\langle v, v' \rangle $ by the product of their Euclidean norms $|v|\; |v'|. $
We get
\bdima
0\leq  \sum \limits_{k=1}^d p_k( \frac12 q_k+ \frac12 r_k) \leq
\sqrt{  \sum \limits_{k=1}^d p_k^2} \; \sqrt{\leq  \sum \limits_{k=1}^d ( \frac12 q_k+ \frac12 r_k)^2}.
\edima
or equivalently
\beq
\label{ineq2}
\left [ \sum \limits_{k=1}^d p_k( \frac12 q_k+ \frac12 r_k) \right ]^2 \leq
\left  [ \sum \limits_{k=1}^d p_k^2 \right ]\left  [ \sum \limits_{k=1}^d \left ( \frac12  q_k + \frac12 r_k \right ) ^2\right ].
\eeq
Combine (\ref{ineq1}) and (\ref{ineq2}) to reach the conclusion of Proposition \ref{FancyIneq}.
\end{proof}
Use  Proposition \ref{FancyIneq} with $d=n$ and non-negative scalars $p_k = x_k^2, \; q_k=c_k$ and $r_k = d_k)$.  Conclude that the net contribution of the last two terms of (\ref{almostWdot}) is negative and therefore

\bdima
\dot W \leq - K W_k^2.
\edima
The solution to the differential equation $\dot a= -K a^2$  is $\displaystyle a(t)= \frac{a(0)}{1+a(0)K t}.$
We get that
$\displaystyle W (t)  \leq \frac{W(0)}{1+W(0)Kt}.$ From $x_M(t) ^2 \approx W(t) $ we conclude
$$  x_M(t) \leq  \frac{ K''\sqrt{W(0)}}{\sqrt{1+t W(0)K}}.$$
This proves the asymptotic stability of the origin, and the stated decay rate.
\end{proof}

\vskip.2cm

Note that the functions $A(t) , \cos^2 t $ and $E(t)$ defined in (\ref{AE}), used  in setting up the polar coordinates system (\ref{polar}) are periodic with common period $\pi.$ For these coefficient functions  the averages mentioned in Proposition \ref{StableAlmost} can be calculated as:
\beq
\label{means}
\begin{array}{l}
 \mu (A)=-\frac{59}{200}= - 0. 295, \\
 \mu( \frac14(\cos^2 t +E(t))^2)= \frac{437}{1600} =.273125,\\
 \mu (A)+ \mu( \frac14(\cos^2 t +E(t))^2)= -\frac{7}{320}.
 \end{array}
 \eeq

\section{Proving the Stability of the Translating States and of the Convergence Rates }\label{Section_Stability}

In this section we complete the proof of stability of the translating state of (\ref{Main}) by working with (\ref{polar}), the polar coordinates reformulation  of the dynamics on the subspace $H\times H$ of the central manifold system (\ref{ODEwy}).  We also prove that the direction of motion for the center of mass of the swarm (\ref{Main}) has a convergent angle $\Theta (t)$ and that all agents' velocities  align with the mean velocity $V.$  We give the rates of convergence to the limit configuration for all the relevant physical coordinates of (\ref{Main}).

\subsection{Proving the Stability of the Origin}\label{SubSection_Stability}

This subsection re-purposes the arguments of Subsection \ref{Subsection_TimeDependent} to prove that the origin of (\ref{polar}) is asymptotically stable, with convergence rate of $\frac{1}{\sqrt t},$ as stated in Theorem \ref{general}.

Due to agents having a common oscillation frequency, one anticipates that the phase angles $\theta_k$ will synchronize, that
$\dot a_k$ will become interchangeable with $da_k/ d\theta_k$ and the coefficient functions  $A(\theta_k)$ interchangeable with the time-dependent (rather than phase-dependent) trigonometric polynomials $A_k(t)$. However, although the differences $\dot \theta _{k}-1$ are small for some agents  $k, $  that may not hold true for the particles that are much closer to the origin than those further away.  For particles closest to the origin it is possible for
$\dot{\theta}_ k -1 \approx\frac{m}{a_k}\mco_0 $
 to be very large even though $m=\mco_3,$ meaning that the phase of such particles can be  wildly out of sync from the particles with larger amplitudes. Figure \ref{AverageDecay} illustrates that their amplitudes can have peculiar behaviour as well: while amplitudes of agents generally decrease over time, agents with much smaller magnitudes may undergo prolonged periods of amplitude growth.

\begin{figure}[h!]
\centering
\includegraphics[width=.9\textwidth]{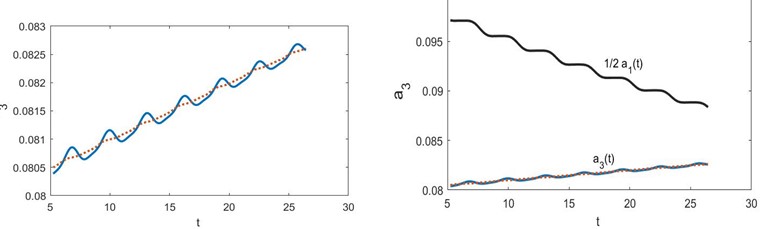}
\caption{
Left: the amplitude $a_3(t)$ of the agent closest to the origin; the dotted curve represents the average of $a_3$ over a window of length $2\pi.$  In average its oscillations increase to catch up with  the larger amplitudes of the swarm.
Right: a re-scaled illustration of the smallest amplitude $a_3$ and largest amplitude $a_1(t)$. In average the larger oscillations decrease.}
\label{AverageDecay}
\end{figure}

\begin{theorem}
\label{general}
Let $A$ and $E$ be the periodic functions introduced in  (\ref{AE}).
For any $\epsilon >0$ there exists $\delta>0$ such that any solution $(a_k(t), \t_k(t))$ of
\bdima
\begin{array}{l}
\dot a_k = -m\cos \t_k +a_k^3 A(\t_k) + a_k \cos^2 \t_k \frac{1}{n} \sum \limits_{l=1}^n a_l^2 E(\t_l)+\mco_4\\
 \\
\dot{\theta}_ k = 1+\frac{m}{a_k}\mco_0 + \mco_2, \; \; \; \; \mbox{ except at } \; \mathcal E{oo}
\end{array}
\edima
with $ \displaystyle \sum \limits _{k=1}^n a_k \cos \t_k=0$ and $m=\mco_3$ satisfies:
if $a_k(0)<\delta$ for $k=1..n$, then $a_k(t)<\epsilon$ for all $t>0$ and $k=1..n.$

Moreover, there exist $c_1, c_2$ that $ \displaystyle c_1 / \sqrt t \leq \frac{\max_k a_k(t) }{ \max_k a_k(0)} \leq c_2 )/\sqrt t $ for large $ t$.
\end{theorem}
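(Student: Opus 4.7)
The plan is to adapt the Lyapunov strategy from Proposition \ref{StableAlmost} to the polar system, using $\Phi(t) = \frac{1}{n}\sum_{k=1}^n a_k^2(t)$ as the primary variable and exploiting the constraint $\sum_k a_k\cos\t_k = 0$ to kill the worst linear-in-$m$ term. Multiplying the $\dot a_k$ equation in (\ref{polar}) by $a_k$ and averaging, the $-m\cos\t_k$ contribution drops out and one is left with
\[
\dot\Phi = \frac{2}{n}\sum_k a_k^4 A(\t_k) + 2\,\Sigma(t)\,\overline{EA}(t) + \mco_5,
\]
where $\Sigma = \frac{1}{n}\sum_k a_k^2\cos^2\t_k$ and $\overline{EA} = \frac{1}{n}\sum_l a_l^2 E(\t_l)$.

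Using the averages in (\ref{means}), namely $\mu(A) = -59/200$ and $\mu(\cos^2) = \mu(E) = 1/2$, I split
\[
\dot\Phi = 2\mu(A)\,\Phi_4 + \mu(E)\,\Phi^2 + O_{\mathrm{osc}}(t) + \mco_5,
\]
with $\Phi_4 = \frac{1}{n}\sum a_k^4$ and $O_{\mathrm{osc}}$ a sum of zero-mean $\mco_4$ oscillatory pieces (of the types $\frac{2}{n}\sum a_k^4(A(\t_k)-\mu(A))$, $\Phi\cdot\frac{1}{n}\sum a_l^2(E(\t_l)-\mu(E))$, and analogous $\cos 2\t_k$ pieces from $\Sigma$). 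Cauchy--Schwarz gives $\Phi_4 \geq \Phi^2$ and since $\mu(A)<0$,
\[
\dot\Phi \leq -\tfrac{9}{100}\,\Phi^2 + O_{\mathrm{osc}}(t) + \mco_5.
\]

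The heart of the argument is absorbing $O_{\mathrm{osc}}$ into a bounded correction $G = \mco_4$. For the self-nonlinear piece set $G_0 = \frac{2}{n}\sum_k a_k^4 \widetilde B(\t_k)$ with $\widetilde B$ a bounded $\pi$-periodic antiderivative of $A - \mu(A)$; chain-differentiating and using $\dot\t_k = 1 + (m/a_k)\mco_0 + \mco_2$, the crucial cancellation $a_k^4 \cdot (m/a_k) = a_k^3 m = \mco_6$ yields $\dot G_0 = \frac{2}{n}\sum a_k^4(A(\t_k) - \mu(A)) + \mco_6$. Cross-term contributions of the form $\Phi\cdot\frac{1}{n}\sum a_l^2 g(\t_l)$ with $g$ zero-mean $\pi$-periodic are absorbed by $G_g = \Phi\cdot\frac{1}{n}\sum a_l^2 \widetilde g(\t_l)$: the intrinsic $a_l^2$ cancels the bad $1/a_l$ in $\dot\t_l$ to give $a_l m = \mco_4$ and an error $\Phi\cdot\mco_4 = \mco_6$. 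Setting $W = \Phi - G_0 - \sum_g G_g$ yields $W = \Phi(1 + \mco_2)$ and
\[
\dot W \leq -\tfrac{9}{100}\,\Phi^2 + \mco_5 \leq -c\,W^2
\]
for $a_M$ small. Integrating gives $W(t) \leq W(0)/(1 + c W(0) t)$, hence $a_M(t) \leq C/\sqrt t$ for large $t$; monotonicity of $W$ provides stability ($a_M^2(t) \lesssim a_M^2(0)$, so $\delta \sim \epsilon/\sqrt n$ suffices), and the matching lower bound $a_M(t) \geq c_1/\sqrt t$ follows from the reverse estimate $\dot\Phi \geq -Kn\,\Phi^2$ (using $A \leq 0$ and $\Phi_4 \leq n\Phi^2$).

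The main technical obstacle is the phase desynchronization encoded in $(m/a_l)\mco_0$: when some $a_l$ is small compared to $a_M$, $\dot\t_l$ can be far from $1$, so naive antiderivative constructions along $\t_l(t)$ would blow up. The resolution is structural: every problematic $1/a_l$ appears inside a correction already carrying a compensating factor $a_l^2$ or $a_k^4$, and the hypothesis $m = \mco_3$ then upgrades the offending terms to $\mco_6$, small enough to be absorbed into the higher-order remainder.
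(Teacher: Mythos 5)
Your overall architecture is sound and genuinely different from the paper's: instead of per-agent Lyapunov functions $W_k$ with the truncation weights $T_k$, the exponent $2.75$, and Lemmas \ref{smalldT}--\ref{small1}, you average first ($\Phi=\frac1n\sum a_k^2$), use the constraint $\sum_k a_k\cos\t_k=0$ to kill the $-m\cos\t_k$ term, and absorb the zero-mean oscillations by additive corrections. Your structural observation is correct and is what lets you bypass the paper's truncation machinery: in your corrections every phase factor $\widetilde B(\t_k)$ or $\widetilde g(\t_l)$ is paired with $a_k^4$ or $a_l^2$ of the \emph{same} index, so the dangerous $(m/a_l)\mco_0$ in $\dot\t_l$ always meets a compensating power of $a_l$ and produces $\mco_6$; the paper's denominators instead pair $a_k^2$ with $C(\t_l)$ across indices $k\neq l$, which is exactly why it needs the weights $T_l$.

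The gap is in the cross term. Writing $\Sigma=\tfrac12\Phi+\Sigma'$ and $\overline{EA}=\tfrac12\Phi+\overline{EA}'$ with $\Sigma'=\frac1n\sum_k a_k^2(\cos^2\t_k-\tfrac12)$, $\overline{EA}'=\frac1n\sum_l a_l^2(E(\t_l)-\tfrac12)$, you get
\begin{displaymath}
2\,\Sigma\,\overline{EA}=\tfrac12\Phi^2+\Phi\,\overline{EA}'+\Phi\,\Sigma'+2\,\Sigma'\,\overline{EA}',
\end{displaymath}
and your list of ``zero-mean absorbable pieces'' covers only the middle two. The product $2\Sigma'\overline{EA}'$ is $\mco_4$ --- the same order as the terms you keep --- and it is \emph{resonant}: both fluctuations oscillate at the common frequency $\dot\t\approx 1$, so for each pair $(k,l)$ the product contains a component depending only on $\t_k-\t_l$, whose time average is nonzero and relative-phase dependent (and, for small-amplitude agents, the relative phase drifts). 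No bounded $\pi$-periodic antiderivative correction of your type can remove that component, so the inequality $\dot W\le-\frac{9}{100}\Phi^2+\mco_5$ is not justified as written. The argument is repairable: since $E-\tfrac12=-\frac{7}{100}\cos2\t+\frac{1}{100}\sin2\t$ and $\cos^2\t-\tfrac12=\tfrac12\cos2\t$, one has the pointwise bound $|2\Sigma'\overline{EA}'|\le\frac{\sqrt{50}}{100}\Phi^2\approx 0.071\,\Phi^2$, which still fits inside your $\frac{9}{100}$ margin and leaves $\dot W\le-\frac{9-5\sqrt2}{100}\Phi^2+\mco_5$; alternatively one can dominate the whole coupling term at once, as the paper does, via Proposition \ref{FancyIneq} with $p_k=a_k^2$, $q_k=\cos^2\t_k$, $r_k=E(\t_k)$ and the averaged square $\mu\bigl(\tfrac14(\cos^2\t+E)^2\bigr)=\frac{437}{1600}$ from (\ref{means}), recovering the margin $-\frac{7}{320}$. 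Either patch must be stated explicitly; without it the key differential inequality, and hence both the stability and the $1/\sqrt t$ upper bound, are unproved. (Your lower-bound argument via $\dot\Phi\ge-Kn\Phi^2$ and the stability bootstrap from monotonicity of $W$ are fine.)
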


\begin{proof}

We consider orbits that start with amplitude vector $a=(a_1, \dots a_n)$ in a small neighborhood  of the fixed point $\zn \in \R^{n}$.
We will use a  Lyapunov function akin to
the one in Section \ref{Subsection_TimeDependent},
modified to segregate the agents that oscillate much closer to the origin from the rest.

Per (\ref{means}) the functions $\displaystyle A(\t)+\frac{59}{200}$ and $\displaystyle \left (\frac12 \cos^2 \t +\frac12 E(\t)\right ) ^2 -\frac{437}{1600}$ have  periodic, thus bounded,   antiderivatives,
with period $\pi. $ Let  $C(\t)$ be an antiderivative of the latter, and $B(\t) $ be an  antiderivative of $\displaystyle A(\t) +\frac{59}{200}$ , both with strictly positive range. \footnote{ The actual formulas for the antiderivatives $C(\t)$ and $B(\t)$  are unimportant in our calculations; however, one could use:
 $\displaystyle
B(\t )= \int A(\t)  +\frac{59}{200}d\t=
 -\frac{17}{100} \sin (2\t)-\frac{9}{800} \sin (4 \t)-\frac{3}{25} \cos ^4(\t)+ 1 $ and
$\displaystyle
C(\t) = 1+\frac{231 }{40000}\sin (4 \t)-\frac{1}{400} \cos (2 \t)-\frac{43 }{160000}\cos (4 \t).$}

 We use the following influence-diminishing function to  weigh the impact of agents.
Let $T_{norm} :\R \to [0,1]$ be the Lipschitz continuous function given by $T_{norm} (r) = 1$ for $|r|\geq 1$ and $T_{norm} (r) = r^2$ for $r$ in $[-1, 1].$ Its graph is illustrated in Figure \ref{TFigureT}.
\begin{figure}[h!]
\centering
\includegraphics[width=.5\textwidth]{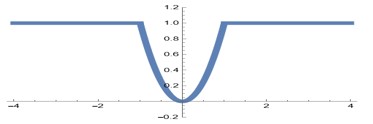}
\caption{The function $T_{norm}$.}
\label{TFigureT}
\end{figure}

Given a point $a=(a_1, \dots a_n)$ in  $\R^{n}$ denote by $T_k(a)$ or simply by $T_k$ the value of
$\displaystyle T\left (\frac{a_k}{a_M^{2.75}} \right ). $ Recall that $a_M$ denotes the largest magnitude. The power $2.75$ was selected to be just below the order of $m$;  $m =\mco_3. $  We have:
\beq
T_k(a) = \begin{cases}
\frac{a_k^2}{a_M^{5.5}} & \text { if  } a_k< a_M^{2.75} \\
1 & \text{ if  } a_k\geq a_M^{2.75}
\end{cases}
\eeq
For notational convenience let $L=L(t)$ and $S=S(t) $ denote subsets of $\{1, \dots n \} $ that correspond to agents whose magnitude at that instant is relatively large ($L$) or relatively small ($S$):
$$ L=\{ k| a_k(t) \geq a_M^{2.75}(t) \}, \; \;  S=\{ k| a_k(t) < a_M^{2.75}(t) \}. $$
Note that if at some time $k\in L$ then $T_k =1, $ and if $k\in S$ then $T_k=\frac{a_k^2}{a_M^{5.5}} .$

\begin{lemma}
\label{smalldT}
 If $ a=(a_1, \dots a_n)$  is a solution to
the system in Theorem \ref{general}, i.e. system (\ref{polar})
, then
$$ \frac{d}{dt} T_k(a(t)) = \mco _{0.25} \; \; \mbox{ for all } k=1..n$$
\end{lemma}

\begin{proof}
Fix $k$ in $1..n.$ The continuous function $T_k$ is almost everywhere differentiable and has a weak derivative given almost everywhere by
$$ \dot T_k= \begin{cases}
\frac{d}{dt} \frac{a_k^2}{a_M^{5.5}} & \text { if  } a_w< a_M^{2.75} \\
0  & \text{ if  } a_w\geq a_M^{2.75}
\end{cases}
$$
If $k \in L$ then $\dot T_k=0.$ For  $k \in S$ use
\bdima
\frac{dT_k}{dt}= 2 \frac{a_k \dot a_k }{a_M^{5.5}}- 5.5 \frac{a_k^2\dot a_M}{a_M^{6.5}}.
\edima
Note that for al indexes $k$ the lowest terms in $\dot a_k$ are of order three, so
$$ \frac{a_k |\dot a_k | }{a_M^{5.5}}=\frac{a_k}{a_M^{2.75}}\frac{|\dot a_k|}{a_M^3}a_M^{0.25}\leq \mco_{0.25}\; \; \mbox{ since } k\in S.$$
For the second term of $\dot T_k$ use $\dot a_M =\mco_3 $ and
$\displaystyle \frac{a_k^2}{a_M^{3.5}}= \left( \frac{a_k}{a_M^{2.75}} \right )^2 a_M^2$ to conclude that for $k \in S$ we have  $\displaystyle \frac{a_k^2\dot a_M}{a_M^{6.5}} = \mco _2.$ Overall the sum of the two terms is the larger of the parts, i.e. $\mco_{0.25}.$
\end{proof}

\begin{lemma}
\label{smalldTheta}  If $ a_1, \dots a_n$ and $\t_1, \dots \t_n$  are solutions to
the system in Theorem \ref{general}, i.e. system (\ref{polar}), then
\bdima
\left ( \frac{d\t_k(t)}{dt} -1 \right )T_k (a) = \mco _{0.25} \; \; \; \mbox{ for all } \; k=1..n.
\edima
\end{lemma}

\begin{proof}
The functions $T_k$ are between zero and one, and $\displaystyle \dot \t_k -1 = \frac{m}{a_k}\mco_0+\mco_2$ so we only need to prove that $\displaystyle \frac{m}{a_k} T_k=\mco_{0.25}.$

If $k$ is an index from $L$ then $T_k=1$ and $\displaystyle \frac{m}{a_k}=\frac{m}{a_M^3} \frac{a_M^{2.75}}{a_k}\; a_M^{0.25}.$ The first fraction of the product is bounded, the second is less or equal to one since $k$ is an index in $L$, leaving  $\displaystyle \frac{m}{a_k}=\mco_{0.25}.$

If $k$ is an index from $S,$ then $\displaystyle T_k= \frac{a_k^2}{a_M^{5.5}} $ and thus
\bdima
\frac{m}{a_k} T_k=\frac{m}{a_k}\frac{a_k^2}{a_M^{5.5}}=\frac{m}{a_M^3}  \frac{a_k}{a_M^{2.75}}\; a_M^{0.25} = \mco _{0.25}.
\edima
We used the fact that $\frac{m}{a_M^3} = \mco_0$ and that when $k\in S$ the ratio $\frac{a_k}{a_M^{2.75}}$ is under one.
\end{proof}

\begin{lemma}
\label{small1}  $\displaystyle a_k^2 (1-T_k(a)) \leq a_M^{5.5}$ and $\displaystyle a_k^2 (1-\sqrt{T_k(a)}) \leq a_M^{5.5}$for all $k.$
\end{lemma}

\begin{proof} If $k$ is in $L$ then $T_k(a) =1$  and the inequalities hold.
If $k$ is in $S,$ then $a_k ^2\leq a_M^{5.5}$ and  both $  1-T_k$ and $1-\sqrt{T_k}$ are in $[0,1]$.

\end{proof}

\noindent
Returning to the proof of Theorem \ref{general} :

Define the function $W_k=W_k(a_1(t), \dots \t_n(t))$  as
\beq
\label{WT}
W_k(t)= \frac{a_k^2(t)}{2a_k^2(t)\left ( B(\t_k(t))+ \frac{1}{n} \sum \limits_{l=1}^{n} T_l (a) C(\t_l) \right )+1}
\eeq
where $B$ and $C$ are the positive, bounded  antiderivatives of $59/200+A, $ and of
\newline  $ \displaystyle (\frac12 \cos^2 \t + \frac12 E(\t) ) ^2 -437/1600.$

Define $W$ as
$\displaystyle  W= \frac{1}{n} \sum \limits_{l=1}^{n} W_l.$
Note that along trajectories other than the origin,  the functions $W_k, W$ are continuous functions of time.
While $a$ is in small neighborhood of the origin, $W_k$ are the composition between the Lipschitz continuous and piece-wise $C^5$ functions $1/(1+x)$, $ a_k^2(t), B(\t_k(t)), C(\t_k (t)) $ and
 $T_l,$ therefore the functions $t\to W_k(t)$ are  differentiable almost everywhere,  with weak derivatives that can be computed with the usual chain rule for almost all $t$.
The denominators of $W_k$  are very close to $1,$ so each $W_k$ is comparable in value to $a_k^2$. In fact
$W$ and $a_M^2$ are only within a factor of $n$ from each other.

We claim that $W$ satisfies the differential inequality  $\displaystyle \dot  W \leq -KW^2 $ for some positive constant $K.$ We start by estimating just $\dot W_k.$

From Remark \ref{quotientrule} we get that
\bdima
\dot W_k= \left [2 a_k \dot a_k - 2 a_k ^4 \left (  \frac{dB}{d\t}\rvert_{\t_k} \dot{\theta_k} +\frac{1}{n} \sum \limits_{l=1}^{n} \frac{dT_l}{dt}C(\t_l)+
\frac{1}{n} \sum \limits_{l=1}^{n} T_l \frac{dC}{d\t}\rvert_{\t_l}\dot{\theta_l}
 \right )   \right ] (1+\mco_2 )
\edima
Recall that $a_k \dot a_k$ and  $a_k^4 \dot{ \theta_k}$ are $\mco_4$ and
that $ \frac{dB}{d\t},  \frac{dC}{d\t}, \frac{dT_l}{dt}$ and $T_l$ are bounded, implying that the above bracket is $\mco_4.$ We get that
\beq
\label{quotientW}
\frac{1}{2}\dot W_k= \left [ a_k \dot a_k - a_k ^4 \left (  \frac{dB}{d\t}\rvert_{\t_k} \dot{\theta_k} +\frac{1}{n} \sum \limits_{l=1}^{n} \frac{dT_l}{dt}C(\t_l)+
\frac{1}{n} \sum \limits_{l=1}^{n} T_l \frac{dC}{d\t}\rvert_{\t_l}\dot{\theta_l}
 \right )   \right ] +\mco_6.
\eeq

Estimate the four terms of $\frac12 \dot W_k.$

\vskip2mm
\noindent
For the first term, use (\ref{polar}) to rewrite $ a_k \dot a_k $ within an $\mco_5$ approximation:
\bdima
a_k\dot a_k=- ma_k\cos \t_k + a_k^4A(\t_k) + a_k^2 \cos^2 \t_k \frac{1}{n} \sum \limits_{l=1}^{n} a_l^2 E(\t_l) +a_k \mco _4
\edima
For third term, $ a_k^4 \frac{1}{n} \sum \limits_{l=1}^{n} \frac{dT_l}{dt}C(\t_l),$ use Lemma \ref{smalldT}  to conclude that it is $\mco_{4.25} .$

\vskip2mm
\noindent
For the second and fourth terms use (\ref{polar}) and Lemma \ref{smalldTheta}, respectively:
\bdima
\begin{array}{l}
a_k^4  \frac{dB}{d\t}\rvert_{\t_k} (\dot{\theta_k}-1) = \frac{dB}{d\t}\rvert_{\t_k} (m\; a_k^3\mco_0+ a_k^4 \mco_2 ) =\mco_6 \; \; \mbox{ and}\\
\\
a_k^4 T_l \frac{dC}{d\t}\rvert_{\t_l} (\dot{\theta_l}-1) =a_k^4\frac{dC}{d\t}\rvert_{\t_k} \mco_{0.25}= \mco_{4.25}
\end{array}
\edima
Thus in (\ref{quotientW}), replacing $\dot \t_k, \; \dot \t_l$ by $1$ and $\dot T$ by $0$ keeps the approximation within $\mco_{4.25}$:
\beq
\label{Wdot25}
\begin{array}{l}
\frac12 \dot W_k=
\left [- ma_k\cos \t_k + a_k^4A(\t_k) + a_k^2 \cos^2 \t_k \frac{1}{n} \sum \limits_{l=1}^{n} a_l^2 E(\t_l)\right ]+ \\
 a_k ^4 \left ( - \frac{dB}{d\t}\rvert_{\t_k}  -
\frac{1}{n} \sum \limits_{l=1}^{n} T_l \frac{dC}{d\t}\rvert_{\t_l}
 \right )
+\mco_{4.25}
\end{array}
\eeq
From Lemma \ref{small1} we get  $\displaystyle  a_k^2 \cos^2 \t_k \frac{1}{n} \sum \limits_{l=1}^{n} a_l^2 E(\t_l) $ and
$\displaystyle   a_k^2 \sqrt{T_k} \cos^2 \t_k \frac{1}{n} \sum \limits_{l=1}^{n} a_l^2 \sqrt{T_l} E(\t_l)$ are within $\mco_{5.5}$ from each other.

Substitute $ \frac{dB}{d\t}= A+\frac{59}{200}$ and $-\frac{dC}{d\t}= \frac{437}{1600}- \left( \frac12 \cos^2 \t + \frac12 E(\t)\right) ^2$ into (\ref{Wdot25});   the terms $a_k^4A(\t_ k)$ cancel and we get:

\beq
\label{Wdot26}
\begin{array}{l}
\frac12 \dot W_k=
\left [- ma_k\cos \t_k -\frac{59}{200} a_k^4 + a_k^2 \sqrt{T_k}\cos^2 \t_k \frac{1}{n} \sum \limits_{l=1}^{n} a_l^2 \sqrt{T_l}E(\t_l)\right ]+ \\
\frac{437}{1600} a_k ^4
\frac{1}{n} \sum \limits_{l=1}^{n} T_l - a_k ^4 \left (
\frac{1}{n} \sum \limits_{l=1}^{n} T_l \left( \frac12 \cos^2 \t _l + \frac12 E(\t _l)\right) ^2 \right )
+\mco_{4.25}
\end{array}
\eeq
Since $0\leq T_l\leq 1$,  $  \frac{1}{n} \sum \limits_{l=1}^{n} T_l \leq 1$
and $ \displaystyle -\frac{59}{200}+ \frac{437}{1600} \frac{1}{n} \sum \limits_{l=1}^{n} T_l \leq -\frac{7}{320}.$
Add the equations (\ref{Wdot26}) corresponding to $k=1..n,$ and divide by $n.$ Use the fact that $\sum \limits_{l=1}^{n}a_k \cos\t_k=0$.

\beq
\label{Wdot27}
\begin{array}{l}
\frac12 \dot W\leq
-\frac{7}{320}\frac{1}{n} \sum \limits_{k=1}^{n} a_k^4 + \left [ \frac{1}{n} \sum \limits_{k=1}^{n}a_k^2 \sqrt{T_k}\cos^2 \t_k \right ]
\left [ \frac{1}{n} \sum \limits_{l=1}^{n} a_l^2\sqrt{T_l} E(\t_l)\right ]+ \\
 -\left [ \frac{1}{n} \sum \limits_{k=1}^{n} a_k ^4 \right ]\left (
\frac{1}{n} \sum \limits_{l=1}^{n}\left (  \frac12 \sqrt{T_l} \cos^2 \t_l + \frac12 \sqrt{T_l}E(\t_l)\right) ^2 \right )
+\mco_{4.25}
\end{array}
\eeq
Use Proposition \ref{FancyIneq} with  $d=n$  and $p_k=a_k^2,\;  q_k= \sqrt{T_k}\cos^2 \t_k $ and $ r_k=\sqrt{T_k}E(\t_k) $
to conclude that the net contribution of the two products of sums in (\ref{Wdot27}) is negative, therefore
\bdima
\dot W \leq -\frac{7}{160} \frac{1}{n} \sum \limits_{k=1}^{n} a_k^4+ +\mco_{4.25}.
\edima
The function $W$ and $a_M^2$ are comparable in scale, therefore there exists a constant $K>0$ that depends only on $n$ such that
$$ \dot W \leq -KW^2,$$
meaning that for all $ t>0$
\bdima
W(t) \leq \frac{W(0)}{1+W(0)Kt} \; \; \mbox{ and } \frac{\max_k a_k(t) }{ \max_k a_k(0)} \leq \frac{c_2}{\sqrt{1+W(0)Kt}}.
\edima

To get the lower bound of $C/\sqrt t$ for $a_M(t),$ or equivalently the lower bound of $1/t$ for $W(t)$, return to (\ref{Wdot26}).
Let $K_1$ the maximum value of the periodic function $ \left (\frac12 \cos^2 \t + \frac12 E(\t) \right ) ^2.$ We have
$$\dot W_k \geq -m a_k \cos \t_k -\frac{59}{200} a_k^4 +0-K_1 a_k^4 + \mco_{4.25}$$
and on account that $\sum _{k=1}^n m a_k \cos \t_k=0$ conclude
$$\dot W \geq  -(\frac{59}{200}+K_1) a_k^4 + \mco_{4.25} \geq -C W^2$$
ensuring that $W$ has a lower bound comparable to  $1/t.$
\end{proof}


\begin{corollary}
\label{RateCor}
 For initial conditions $Z_0 \in H\times H \times \R^{2n-1}$ near the fixed point $Z_f $, $Z_0$ not on the stable manifold $W^s(Z_f),$ there exists $C>0$ depending on $Z_0$ such that
 \beq
 \label{EqRateCor}
 \max_k \left (|r_k(t)-R(t)|+ |\dot r_k(t) -V(t)| \right ) \geq \frac{C}{\sqrt t} \; \; \mbox{ for large  } t.
 \eeq
\end{corollary}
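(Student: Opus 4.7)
The plan is to transport the lower bound of Theorem \ref{general}, valid on the central manifold in polar coordinates, back to the original state variables. The argument breaks into three steps.

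First, an elementary coordinate estimate. Because $y_k=\mathrm{ort}_V(r_k-R)$ and $w_k=\mathrm{ort}_V(\dot r_k)$ are scalar components of the displacement and velocity-deviation vectors in directions of unit length, one has $|r_k-R|^2\geq y_k^2$ and $|\dot r_k-V|^2\geq w_k^2$, hence
\[
\bigl(|r_k-R|+|\dot r_k-V|\bigr)^2\;\geq\; y_k^2+w_k^2\;=\;a_k^2.
\]
Maximizing over $k$, the quantity on the left of (\ref{EqRateCor}) dominates $a_M(t)$, so it suffices to show $a_M(t)\geq C/\sqrt t$ for large $t$.

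Second, I reduce the problem to a trajectory on the central manifold. Near $Z_f$ the spectrum of the linearization (\ref{Jacobian}) splits into a purely imaginary part (the $2n$-dimensional central subspace spanned by $(w,y)$) and a strictly negative part (the $(2n-1)$-dimensional stable subspace spanned by $(x,z)$). By the stable foliation/shadowing theorem for such partially hyperbolic fixed points, every $Z_0$ sufficiently close to $Z_f$ admits a companion orbit $\tilde\psi_t$ lying on a local central manifold $W^c$ with $\|\psi_t(Z_0)-\tilde\psi_t\|\leq Ke^{-\lambda t}$ for some $K,\lambda>0$. The unique leaf of this stable foliation passing through $Z_f$ is $W^s(Z_f)$ itself, so the hypothesis $Z_0\notin W^s(Z_f)$ forces $\tilde\psi_0\neq Z_f$; in particular the amplitude vector $\tilde a(0)=(\tilde a_1(0),\dots,\tilde a_n(0))$ of the companion is not identically zero.

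Third, I apply the lower bound from Theorem \ref{general} to $\tilde\psi_t$. In polar coordinates $\tilde\psi_t$ solves the reduced system (\ref{polar}) with non-trivial initial amplitude vector, so $\tilde a_M(t)\geq c_1\,\tilde a_M(0)/\sqrt t$ for all large $t$. Since each $a_k=\sqrt{w_k^2+y_k^2}$ is Lipschitz in $(w,y)$, the exponentially small deviation $\|\psi_t(Z_0)-\tilde\psi_t\|=\mco(e^{-\lambda t})$ contributes only an $\mco(e^{-\lambda t})$ error to the amplitudes, which is eventually negligible against $1/\sqrt t$. Consequently $a_M(t)\geq c_1\tilde a_M(0)/(2\sqrt t)$ for $t$ large, and combined with the first step this yields (\ref{EqRateCor}) with the constant $C$ depending on $\tilde a_M(0)$, hence on $Z_0$.

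The main obstacle is step two: justifying the companion orbit $\tilde\psi_t\in W^c$ with exponential convergence, and in particular identifying the leaf through $Z_f$ with $W^s(Z_f)$ itself. This is standard for hyperbolic and partially hyperbolic fixed points, but the high multiplicity of the purely imaginary eigenvalues and the merely $C^5$ regularity of $W^c$ in our setting require quoting a careful version of the stable-foliation theorem (in the spirit of Carr's monograph). Once that statement is in hand, the polynomial-versus-exponential comparison in step three is routine.
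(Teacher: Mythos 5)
Your proposal is correct and follows essentially the same route as the paper: bound $\max_k\left(|r_k-R|+|\dot r_k-V|\right)$ below by $a_M(t)$ via the moving-frame coordinates, pass to an exponentially close companion orbit on the central manifold whose $(w,y)$-projection is nonzero because $Z_0\notin W^s(Z_f)$, and invoke the $c_1/\sqrt t$ lower bound of Theorem \ref{general}, with the exponential error absorbed. The stable-fibration/asymptotic-phase step you flag is exactly what the paper also relies on (via Carr's theory), so no genuinely different ingredient is involved.
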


\begin{proof} Let $Z_0 \in H\times H \times \R^{2n-1}$ be near the fixed point $Z_f, $ with $Z_0 \notin W^s(Z_f).$
Theorem \ref{general} implies that the flow  of (\ref{State}) within $\in H\times H \times \R^{2n-1}$ is asymptotically stable near $Z_f,$ therefore there exists an initial condition $Z_{0,c}$ in the central manifold, such that the two solutions $\psi_t(Z_0)$ and $\psi_t(Z_{0,c})$ are exponentially close for $t$ in $[0, \infty).$  Let $r_{c,k}, R_c, V_c, w_{c, k}, \dots u_{c, k}$ denote the position vectors, velocities and their components for the orbit of $Z_{0,c}.$
Since $Z_0 \notin W^S(Z_f),$ the projection onto the $ (w,y)$ subspace of the point $ Z_{0,c}$ is not the origin, so $a_{c,M}(0)= \max_k \sqrt{w_{c,k}^2(0)+y_{c,k}^2(0)} \neq 0.$

For the solution originating at $Z_0$ we have
\bdima
|r_k(t)-R(t)|^2+ |\dot r_k -V|^2 = x_k^2+y_k^2+(u_k-|V|)^2 +w_k^2\geq w_k^2+y_k^2=a_k^2,
\edima
thus $\left ( |r_k(t)-R(t)|+ |\dot r_k -V| \right )\geq  a_k.$
Due to the exponential proximity of the two orbits, there exist positive $C$ and $\eta $ such that
\bdima
|a_k(t) -a_{c, k}(t) | \leq Ce^{-\eta t} \; \mbox{ for } t \in [0, \infty).
\edima
We get
\bdima
\max_k \left (|r_k(t)-R(t)|+ |\dot r_k -V| \right ) \geq \max_k a_k(t)  \geq \max_k a_{c,k}(t)-Ce^{-\eta t},
\edima
with $\max_k a_{c,k}(t) \geq \frac{a_{c,M}(0)}{\sqrt t}, $ per Theorem \ref{general}.
This  proves (\ref{EqRateCor}).
\end{proof}

Theorem \ref{general} implies that the flow on the central manifold of (\ref{State0}) is asymptotically stable in $H\times H$.  It also implies  the asymptotic stability of the origin for the system (\ref{State0})  with decay rate of order $1/\sqrt t $,
and the asymptotic stability of $Z_f=col( \zn, \zn, \znn, \onen )$ for (\ref{State}) in $H\times H \times \R^{2n-1}.$
 To complete the proof of Theorem \ref{MainTh}
 we are left to show that the direction of $V$ is convergent and close to that of $V(0),$ for which it is sufficient  to prove that $\int _0^\infty |m|dt$ converges,  per
Remark (\ref{recoverflow}).
  The solutions of the system (\ref{State}) stemming from initial conditions near $Z_f$  evolve exponentially close to a solution on the center manifold. It is therefore enough to limit the investigation of $\int _0^\infty |m|\; dt$ to trajectories from the central manifold.

\subsection{Limit Behavior of the Mean Velocity V and of the Physical Coordinates of the Swarm}

In this subsection we recast the results obtained for the amplitudes $a_k$ in terms of the physical coordinates of the swarm (\ref{Main}): the positions  $r_k$ and $R,$ and the velocities $\dot r_k $ and  $V$.
We assume that the initial conditions for (\ref{State}) are from a small neighborhood $\mathcal B$ of $Z_f$, within its basin of attraction.

We begin by addressing the limit behavior of the direction of motion, i.e.
the polar angle $\Theta $ satisfying $\displaystyle V(t)=|V(t)| e^{i\Theta (t)}.$
Recall that the change in $\Theta (t) $  is given by
$\displaystyle  \frac{d\Theta}{dt}=m , $ according to (\ref{mdefine}) and (\ref{mandS2}).

\begin{remark}
\label{remarkVinfty} There exists $\Theta _\infty $  depending on $Z_0$ and $c>0$ such that
\bdima | \Theta (t) - \Theta _\infty| \leq \frac{c}{\sqrt t}  \; \; \mbox{ for large } t. \edima
Moreover, $\lim _{t \to \infty} V(t) = ( \cos \Theta _\infty, \sin \Theta _\infty). $
\end{remark}
Proof: Due to the exponentially close proximity of arbitrary orbits to orbits from the central manifold, it is enough to prove the stated assertion for the central manifold flow.

Recall that $\Theta (t) =\Theta _0+\int _0^t m(s) ds.$ For $Z_0$ on the central manifold $m(t)=\mco_3.$  We get that for large $t$ the absolute value of $m$ is below a multiple of $a_M(0) ^3 t^{-3/2}$ which has a convergent integral at infinity.
This proves the existence of $\displaystyle \lim_{t \to \infty} \Theta (t),  $ denoted $ \Theta _\infty.$ Note that $| \Theta _\infty-\Theta (t)|$ is bounded above by $\displaystyle \int_t ^\infty \frac{c a_M(0) ^3}{s^{3/2}} ds$ which is of order $a_M(0) ^3 t^{-1/2}.$

Let $\displaystyle V_\infty= (\cos \Theta _\infty, \sin \Theta _\infty).$
From $V(t) =|V(t)| (\cos \Theta (t), \sin \Theta (t)) $ with $|V|\to 1$ and $ \Theta (t) \to \Theta _\infty $ we conclude $V(t)$ converges to $(\cos \Theta _\infty, \sin \Theta _\infty).$

\begin{remark} If $\psi_t(Z_0) $ is a solution of (\ref{State}) not in $W^s$ then there exists $c>0$ such that the mean speed satisfies $1-|V|\geq \frac{c}{t}$ for large $t.$ Moreover, there exist initial conditions $Z_0$ for which the center of mass $R(t)$ does \emph{not} stay within bounded distance from rectilinear motion with velocity $V_\infty, $
in the sense that for any $T >0$
\bdima
| R(t+T)- R(T)- t V_\infty  | \to \infty \; \mbox{ when  } t \to \infty.
\edima
The center of mass falls behind exceedingly large distances compared to where rectilinear motion would have located it.
\end{remark}

\begin{proof} If $\psi_t(Z_0) $ is not in $W^s$, then it is exponentially close to a nontrivial solution from the central manifold, $\psi_t(Z_0^c).$ It suffices to prove the bound for  $Z_0^c.$
Note that for $Z_0^c\neq 0 $ we have
\bdima
1-|V|=-\overline z=
\frac{1}{n} \sum_{l=1}^n  (\frac38 w_l^2+ \frac28 w_l y_l + \frac18 y_l^2 )\geq \frac{2-\sqrt 2}{8}  \frac{1}{n} \sum_{l=1}^n a_k^2\geq \frac{c}{t}.
\edima
To prove the existence of configurations with large gap between the center of mass $R$ and the rectilinear motion, consider agents with mirror symmetry with respect to $x-$axis (i.e. $m=0$). Due to the unidirectional motion, we have that $V(t)= (|V|, 0)$ with $V_\infty =(1,0)$ and we get that $R$ moves along the $x-$ axis: $R(t)=(|R(t)|, 0).$
Identifying $R$ with its $x-$ component we get
\bdima
t V_\infty -( R(t+T)- R(T))  = \int _T^{t+T} (1-|V(s)| ) ds\geq  \int _T^{T+t} \frac{c}{s} ds\geq c\ln \frac{t+T}{T}\to \infty
\edima
as $t\to \infty$.
\end{proof}

\begin{remark}
\label{driftremark}
For $n\geq 3$ the estimates of $\mco(t^{-3/2})$ for $m$ and for the drift $\Theta$ are sharp: there exist initial conditions $Z_0$ for which the solution $\psi_t(Z_0) $  of (\ref{State}) satisfies: there exist $c>0$ and times $t', t''$ going to infinity such that $m(t') \geq c/(t')^{3/2}$ and such that the variation in the directions of motion $\Theta (t)$  satisfies
$$ |\Theta (t'')-\Theta (t')| \geq C/(t')^{3/2} $$
\end{remark}

\begin{proof} In order to prove that the estimate $m=\mco_3$ is sharp, consider a configuration  with $n-1$ of the particles having identical initial conditions,  thus identical motion for all $t$: $w_k=w(t) , y_k=y(t) $ for $k=1..n-1$. Necessarily, the last particle has $w_n =-(n-1) w, \; y_n=-(n-1) y.$ Let $ a_1$ and $ \t$ denote the common amplitude and polar angle for the first $n-1$ particles.

From Theorem \ref{general} there exists $c_3>0$ such that $a_1(t) \geq c_3/\sqrt t.$ For this configuration
on account that $\sum w_k=0$  we get
\beq
\label{largem}
\begin {array} {l}
m= -\frac{1}{n}\sum \limits_{k=1}^n \left( \frac{17}{25}w_k^2-\frac{12}{25}w_k y_k+ \frac{8}{25}y_k^2 \right )w_k +\mco_5 =\\
(n-1) (n-2) \left( \frac{17}{25}w^2-\frac{12}{25}w y+ \frac{8}{25}y^2 \right )w +\mco_5\geq\\
c \frac{1}{t \sqrt t}  \left( \frac{17}{25}\cos ^2 \t-\frac{12}{25} \cos \t \sin \t  + \frac{8}{25}\sin^2\t  \right )\cos \t +\mco_5
\end{array}
\eeq
whenever the trigonometric function is positive.
\begin{figure}[h!]
  \centering
  \includegraphics[width=0.4\textwidth]{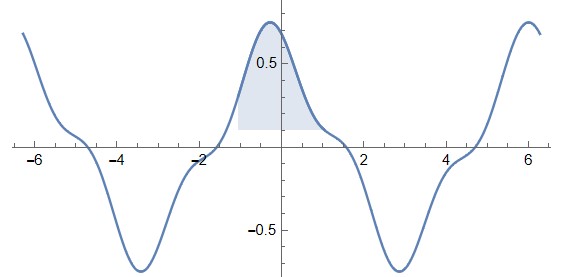}
  \caption{The graph of  $y=\left( \frac{17}{25}\cos ^2 \t-\frac{12}{25} \cos \t \sin \t  + \frac{8}{25}\sin^2\t  \right )\cos \t $, shown on $[-2\pi, 2\pi]; $ the shaded part has $\t \in [-\pi/3, \pi/3]$ and $ y>0.10.$
   }
  \label{sizeofdrift}
\end{figure}

Recall that $\frac{d\t}{dt}= 1+\mco_2 = 1+\mco (1/t),$ meaning that the polar angle $\t (t)$ is strictly increasing  (to infinity), is Lipschitz (with constant close to one for intervals near infinity), and  $\t(t)$ is comparable in size with $t.$
Let $N$ be large. Let $t'=t'(N)$ and $t''=t''(N)$ be time values when $\t(t')=2N\pi-\pi/3$ and $\t(t'')=2N\pi+\pi/3.$ We have that both $t'$ and $t''$ are comparable to $N$, and that for $t$ in $[t', t'']$ the amplitude $a_1(t)$ is comparable to $1/\sqrt N$. For such times, modulo $2\pi$ the polar angle $\t$
reduces to an angle in $[-\pi/3, \pi/3] $ where the trigonometric function
$\left( \frac{17}{25}\cos ^2 \t-\frac{12}{25} \cos \t \sin \t  + \frac{8}{25}\sin^2\t  \right )\cos \t $ is above $0.10$, see Figure \ref{sizeofdrift}.
These estimates,  and (\ref{largem})  prove
$$m(t) \geq  \frac{0.10 c}{t^{3/2}}\; \mbox{  for all } t \; \mbox{in the interval } [t', t''].$$
For this configuration, the drift of the direction of motion,  $ \displaystyle \Theta(t'')-\Theta(t') $  satisfies:
$\displaystyle
\Theta(t'')-\Theta(t')= \int _{t'}^{t''} m(t) dt \geq \int _{t'}^{t''} \frac{0.10 c}{t\sqrt t} dt .$
Next we show that the length of the interval $[t', t'']$ is bounded below and above by absolute constants:  the functions of $t \to \t (t)$ and its inverse $ \t \to t $  have Lipschitz constants close to one, so there exist positive constants $c_4$ and $c_5$  such that
$$ \frac{2\pi}{3}=|\t(t'') -\t(t') |  \leq c_4 |t''-t'| \leq c_4 c_5 |\t(t'') -\t(t') |= c_4c_5 \frac{2\pi}{3} .$$
We conclude that $\displaystyle \int _{t'}^{t''} \frac{0.10 c}{t\sqrt t} dt $ is comparable to $(t')^{-3/2}$ and that the drift $\Theta(t'')-\Theta(t') $ exceeds $(t')^{-3/2}.$
\end{proof}

\section{Appendix}

{\bf Proposition \ref{NoSymmetries} }
If among the $n$ agents of the system (\ref{Main}) there exist $p \geq 3 $  agents (assumed to be the first $p$) whose motion is symmetrically distributed about the center $R$ of the swarm, i.e.
\beq
\label{eqSymmetries}
 r_k(t)= R(t)+e^{2\pi i \frac{(k-1)}{p}}( r_1(t)-R(t)) \; \mbox{ for }\; k=1 ..p,
\eeq
then either $r_1(t)=\dots = r_p(t)$ for all $t\in \R$, or $R(t)=R(0)$ for all $t\in \R.$

\begin{proof}

Within this proof, we identify all the planar  vectors as points in $\mathbb C$ so that we can multiply, divide, and take complex conjugate.

Assume that there exists $p\geq 3$ such that (\ref{eqSymmetries}) holds while the center of mass is not stationary, i.e. there exists a time interval $ I_1=(t_1, t_2)$  with  $ V(t) \neq 0$ for $ t \in I_1.$  We want to show that there exists an open time interval such that all $p$ particles' locations equal $R(t).$ (If such an interval exists, it means the velocities for the $p$ particles match $V(t)$ in that interval, so the $p$ particles have identical initial conditions; by the uniqueness of solutions to (\ref{Main}) we get that the particles satisfy $ r_1(t)=\dots = r_p(t)$ for all $t\in \R$.)

To simplify notations, {\em within this proof} \footnote{Outside of this Appendix, we used $\t_k$ and $m$ to denote real-valued functions $\t_k=\t_k(t)$ and $m(t)$ with a different meaning. In this proof $\t_k$ are constants, and $m(t)$ is complex-valued.}
let $\theta_k = 2 \pi  \frac{(k-1)}{p}$ for $ k=1.. p$ and let $\sigma (t), m(t)$ and $m_k(t)$ be complex valued function defined by:
\beq
\label{defineAppsigma}
 \sigma (t)=  r_1(t)- R(t), \;  \; m(t) = \frac{\dot \sigma }{V}, \; \mbox{ and } m_k(t)=e^{-i \theta_k}+ m(t)
 \eeq
We get that for $k=1.. p,$
\beq
\label{Apptk}
 r_k = R+ e^{i \theta_k} \sigma  \; \mbox{ and } \; \dot r_k= V+e^{i \theta_k}Vm .
 \eeq
To prove the appendix, we need to show that if $p\geq 3,$ then there exists a time interval $I_2$  where $\sigma (t)=0$ for $t\in I_2.$

Using (\ref{Apptk}), the first $p$ equations of (\ref{MainC}) become
$$
\dot V+ e^{i \theta_k} \dot V m + e^{i \theta_k}V \dot m =
(1-|V|^2 |1+e^{i \theta_k}m|^2)V(1+e^{i \theta_k}m ) - e^{i \theta_k} \sigma $$
\bdima
\dot V e^{i \theta_k}( e^{-i \theta_k}+ m(t)) + e^{i \theta_k}V \dot m=
(1-|V|^2 |e^{-i \theta_k}+ m|^2)Ve^{i \theta_k}(e^{-i \theta_k}+m ) - e^{i \theta_k} \sigma
\edima
Divide by $e^{i \theta_k}$; recall that  $e^{-i \theta_k}+ m(t)=m_k.$ We get
$$ \dot V m_k +V \dot m = (1-|V|^2|m_k|^2)Vm_k -\sigma. $$
Divide by $V|V|^2$ and collect the $m_k$ terms to get that for $k=1..p$ and $t\in I_1$
\beq
\label{Absurd1}
\frac{V\dot m+ \sigma}{V|V|^2} = m_k\left ( -\frac{\dot V}{V|V|^2}+\frac{1}{|V|^2} -|m_k|^2 \right )
\eeq
 Employ the notations:  $\alpha (t)=\frac{1}{|V|^2} -\frac{\dot V}{V|V|^2}$ and $\beta(t)=\frac{V\dot m+ \sigma}{V|V|^2}$ to further simplify (\ref{Absurd1}) to:
 \beq
 \label{alphabeta}
 \beta(t)=  m_k(t)\left (\alpha (t) -|m_k(t)|^2 \right), \; \mbox{ for } k=1, \dots p.
 \eeq

\begin{lemma} If $\alpha, \beta, m \in \mathbb C  $ are such that
\beq
 \beta= (m+e^{-i \theta_k}) \left (\alpha  -|m+e^{-i \theta_k}|^2 \right) \; \mbox{ for } \t_k=\frac{2\pi (k-1)}{p}, k=1, \dots p,
  \label{alphabeta2}
 \eeq
then $\alpha $ is the positive real number $\alpha= 2|m|^2+1$ and $\beta= m (|m|^2-1).$
\label{AppLemma}
\end{lemma}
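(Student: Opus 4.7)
The plan is to treat the hypothesis as a system of $p$ equations in the unknowns $\alpha, \beta$ (with $m$ fixed) and to exploit the fact that the points $e^{-i\theta_k}$ are exactly the $p$-th roots of unity. Setting $z_k = e^{-i\theta_k}$ and using $|z_k|=1$ (so $\bar z_k = z_k^{-1}$), I expand
\[
|m+z_k|^2 = (m+z_k)(\bar m + z_k^{-1}) = |m|^2 + 1 + \bar m z_k + m z_k^{-1},
\]
and then multiply out $(m+z_k)(\alpha-|m+z_k|^2)$. The result is a Laurent polynomial in $z_k$ whose powers lie in $\{-1,0,1,2\}$, so the hypothesis takes the form
\[
\beta = A_0 + A_{-1} z_k^{-1} + A_1 z_k + A_2 z_k^2 \quad \text{for every } k = 1, \dots, p,
\]
with coefficients I read off explicitly from the expansion:
\[
A_0 = m(\alpha - |m|^2 - 2),\quad A_{-1} = -m^2,\quad A_1 = \alpha - 2|m|^2 - 1,\quad A_2 = -\bar m.
\]

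Next I would apply Fourier orthogonality over the group of $p$-th roots of unity: $\sum_{k=1}^p z_k^{\,j} = 0$ whenever $j \not\equiv 0 \pmod p$, and $=p$ otherwise. Averaging the equation directly over $k=1,\dots,p$ kills $A_{-1}, A_1, A_2$ (the relevant exponents being $-1,1,2$, all nonzero mod $p$ for $p \geq 3$) and yields $\beta = A_0$. Multiplying the equation by $z_k^{-1}$ and averaging then kills $A_0, A_{-1}, A_2$ (the exponents appearing are $-1, -2, 1$, again all nonzero mod $p$ for $p\geq 3$) and leaves $0 = A_1$. Thus $\alpha = 2|m|^2 + 1$, which is manifestly a positive real number since $|m|^2\geq 0$. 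Substituting back into $\beta = A_0$ gives
\[
\beta = m(\alpha - |m|^2 - 2) = m(2|m|^2 + 1 - |m|^2 - 2) = m(|m|^2 - 1),
\]
as claimed.

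The only conceptual point is verifying that $\pm 1, \pm 2$ are all nonzero modulo $p$ whenever $p \geq 3$, which is immediate; this is exactly where the hypothesis $p \geq 3$ is used. (For $p=3$ one could extract an extra relation $A_{-1} + A_2 = 0$, i.e.\ $\bar m = -m^2$, by multiplying by $z_k$ instead and averaging, but this additional constraint on $m$ is not required for the stated conclusion.) The main obstacle is purely the algebraic bookkeeping in the expansion of $(m+z_k)^2(\bar m + z_k^{-1})$; once the four coefficients $A_0, A_{-1}, A_1, A_2$ are correctly identified, the orthogonality argument is entirely mechanical and the conclusion drops out with no case analysis on $p$.
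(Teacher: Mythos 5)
Your proof is correct and is essentially the paper's own argument: both expand $|m+e^{-i\theta_k}|^2=|m|^2+1+\bar m e^{-i\theta_k}+me^{i\theta_k}$ and exploit the vanishing of the first and second power sums of the $p$-th roots of unity (hence the implicit requirement $p\geq 3$, which the paper also uses) through two averaging steps. Your organization as Laurent-coefficient extraction (average directly to get $\beta=A_0$, then average against $z_k^{-1}$ to force $A_1=0$) is just a slightly tidier bookkeeping of the paper's average-then-substitute-then-average computation, and your coefficients $A_{-1},A_0,A_1,A_2$ check out.
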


\begin{proof}  Note that
\beq
\label{mkconjugate}
  |m+e^{-i \theta_k}|^2 = |m|^2+1+ m e^{i\t_k}+\overline{m}   e^{-i\t_k}
\eeq
and that the $p$ roots of unity, $\displaystyle e^{i\t_k}, $ satisfy:
\beq
 \sum_{k=1}^p e^{i\t_k}=0, \;  \sum_{k=1}^p e^{-i\t_k}=0, \;  \sum_{k=1}^p e^{i 2\t_k}=0\; \mbox{ and  } \sum_{k=1}^p e^{-i 2\t_k}=0.
 \label{unityroots}
\eeq
Averaging over the equations (\ref{mkconjugate} ), from (\ref{unityroots}) we  get
\beq
\label{meannorms}
\begin{array}{l}
\frac{1}{p} \sum_{k=1}^p |m+e^{-i \theta_k}|^2 =|m|^2+1\\
\\
\frac{1}{p} \sum_{k=1}^p e^{-i \theta_k}|m+e^{-i \theta_k}|^2 =m\\
\\
\frac{1}{p} \sum_{k=1}^p e^ {i \theta_k}|m+e^{-i \theta_k}|^2 =\overline{m} .
\end{array}
\eeq
Rewrite the equations of (\ref{alphabeta2}) as
\bdima
\beta= (m+e^{-i \theta_k}) \alpha - m \; |m+e^{-i \theta_k}|^2 -e^{-i \theta_k}|m+e^{-i \theta_k}|^2
\edima
and average over $k=1..p.$ We get
\beq
\label{betaave}
\beta= (m+0) \alpha - m (|m|^2+1) -m= m(\alpha-|m|^2-2).
\eeq
Substituting $\beta =m\alpha-m(|m|^2+2)$ in the left side of (\ref{alphabeta2}) yields
\bdima
m\alpha -m(|m|^2+2) = m \alpha +  e^{-i\t_k} \alpha -m\; |m+e^{-i\t_k}|^2 -e^{-i\t_k}|m+e^{-i\t_k}|^2 \; \mbox{ and }
 \edima
 \bdima
  e^{-i\t_k} \alpha=- m(|m|^2+2)+ m\; |m+e^{-i\t_k}|^2 +e^{-i\t_k}|m+e^{-i\t_k}|^2.
 \edima
 Solve for $\alpha $:
  \bdima
 \alpha=- m(|m|^2+2) e^{i\t_k} + m\; |m+e^{-i\t_k}|^2  e^{i\t_k} + |m+e^{-i\t_k}|^2
 \edima
  Averaging over $k=1..p$, based on (\ref{unityroots}) we obtain
 \bdima
\alpha= - m(|m|^2+2)  0 + m \overline{m} + |m|^2+1= 2|m|^2+1.
\edima
Substituting $ \alpha= 2|m|^2+1$ into  (\ref{betaave}) gives $\beta= m(|m|^2-1),$
concluding the proof of the lemma.
\end{proof}

Rewriting (\ref{alphabeta})  using Lemma \ref{AppLemma}, we get that  $m_k$ are solutions for the equations
\beq
\label{mkcubic}
m(|m|^2-1)= m_k\left (2|m|^2+1 -|m_k(t)|^2 \right), \; \mbox{ for } k=1, \dots p
\eeq
 implying that if the time is fixed, the equations
 $\displaystyle z(2|m|^2+1-|z|^2 )= m(|m|^2-1)$ have at least $p$ distinct complex solutions $z$
 (since $m_k=e^{-i \theta_k}+ m(t)$ are all distinct solutions).

\vskip2mm
\noindent
Case 1: There exists a time  when $  \beta= \displaystyle m(|m|^2-1)$ is nonzero.
We claim that in this case $p\leq 2.$

At a time when $\beta \neq 0$, necessarily
 $\; (2|m|^2+1-|m_k|^2) \neq 0$ . Express $\beta$  using polar coordinates as
$\displaystyle \beta=be^{i\psi}$ where $b>0.$ Since the solutions $z$ to
$z(2|m|^2+1-|z|^2)=be^{i\psi}$ satisfy $ze^{-i\psi} =\frac{b}{2|m|^2+1-|z|^2}\in \R$ , they are collinear (on the line through the origin, making an angle $\psi $ or $\psi +\pi$ with the $x-$ axis).
We conclude that all the points $m+e^{-i\t_k}$ are collinear, which is impossible for the $p^{th}$ roots of unity with $p\geq 3.$

\vskip2mm
\noindent
Case 2:   $\beta=m(|m|^2-1) $ is the zero function on the interval $I_1.$ We claim that this implies that $m(t)=0$ for all $t \in  I_1$  or $p\leq 2.$ 

Assume not: then $p \geq 3 $ and there exists a subinterval  $I_2$ of $I_1$ with $m(t)\neq 0, \beta (t) =0$ for $t$ in $I_2.$ Since $\beta = m(|m|^2-1),$
we get that $|m(t)|=1, \alpha (t)=2|m|^2+1=3 $  and therefore the equations (\ref{mkcubic}) become: at times $t\in I_2$
\beq
\label{betazeromnot3}
0=(m+e^{-i \theta_k}) (3-|m+e^{-i \theta_k}|^2)
\eeq
For a fixed $t,$ the equation $3-|m+e^{-i \theta_k}|^2=0$ can be satisfied by at most two roots of unity, because otherwise we would have three or more roots satisfy $|\overline{m}+e^{i \theta_k}| =\sqrt 3,$
implying that the circle circumscribing those three roots of unity  is centered at $-\overline{m}, $ and has radius $\sqrt 3$ rather than being the unit circle, a contradiction.
Satisfying (\ref{betazeromnot3}) when $p\geq 3$ requires that the factor $(m+e^{-i \theta_k}) $ has a zero of its own, implying that the continuous function $m(t)$ takes values in the discrete set $\{ -e^{-i \theta_k}, k=1..p\}.$  Thus there exists an index, denoted $q,$ such that $m(t)= -e^{-i \theta_q} $ for all $t \in I_2.$ We will show this together with (\ref{defineAppsigma}) imply that  $V= 0, $ contradicting the starting assumption of $V\neq 0.$

Based on $m=-e^{-i \theta_q} $ and (\ref{Apptk}) we get that on $I_2$
\bdima
\dot r_q=  V+e^{i \theta_q}mV=V+e^{i \theta_q}(-1)e^{-i \theta_q} V=0
\edima
meaning that the location  of the $q^{th}$ agent remains unchanged in time.  The equation of motion states
 $\displaystyle \ddot r_q =(1-|\dot r_q|^2) \dot r_q -(r_q-R), $ thus $R=r_q$ is constant, so $\dot R= V=0,$ contradicting $V\neq 0 $ on $ I_2.$

Our two cases show that if $p\geq 3$ then necessarily $m(t)=0$ on the interval $I_1.$ We get that $\dot{\sigma} (t)=0$
 and $r_k (t)=R(t)+ e^{i \theta_k} \sigma _0$ for all $t$ in $I_1.$ Substituting $\dot r _k=V$ and $\ddot r_k= \dot V$ in (\ref{Main})
 yields that for multiple angles $\theta_k$ the equalities $\dot V= (1-|V|^2)V - e^{i \theta_k} \sigma_0$ hold, which require that $\sigma_0=\sigma (t)=0,$ and therefore all $p$ particles' positions coincide on $I_1.$

\end{proof}

\end{document}